\definecolor{violet}{rgb}{0.0,0.2,0.7}
\definecolor{rouge}{cmyk}{0.0,0.6,0.4,0.3}
\definecolor{rouge2}{rgb}{0.8,0.0,0.2}
\newcommand{\R}{\mathbb{R}}
\newcommand{\CC}{\mathbb{C}}
\newcommand{\Q}{\mathbb{Q}}
\newcommand{\Z}{\mathbb{Z}}
\renewcommand{\b}{\bar}
\renewcommand{\d}{\partial}
\newcommand{\vp}{\varphi}
\renewcommand{\O}{\mathcal{O}}
\newcommand{\Ox}{\mathcal{O}_{X}}
\newcommand{\ep}{\varepsilon}
\newcommand{\la}{\langle}
\newcommand{\ra}{\rangle}
\renewcommand{\ge}{\geqslant}
\renewcommand{\le}{\leqslant}
\newcommand{\Ric}{\mathrm{Ric} \,}
\newcommand{\om}{\omega}
\newcommand{\ddc}{dd^c}
\newcommand{\xmd}{X_0}
\newcommand{\vpe}{\varphi_{\varepsilon}}
\newcommand{\fe}{F_{\varepsilon}}
\newcommand{\D}{\Delta}
\newcommand{\De}{\Delta_{\omega_{\varepsilon}}}
\renewcommand{\fe}{f_{\varepsilon}}
\newcommand{\Supp}{\mathrm {Supp}}
\newcommand{\Dlc}{\Delta_{lc}}
\newcommand{\Dklt}{\Delta_{klt}}
\newcommand{\Xlc}{X_{lc}}
\newcommand{\cka}{\mathscr{C}^{k,\alpha}_{qc}}
\renewcommand{\b}{\bar}
\renewcommand{\om}{\omega}
\newcommand{\omi}{\omega_{\infty}}
\newcommand{\omke}{\omega_{\textrm{KE}}}
\newcommand{\ome}{\omega_{\varepsilon}}
\newcommand{\tr}{\mathrm{tr}}
\newcommand{\psie}{\psi_{\ep}}
\newcommand{\Fe}{F_{\ep}}
\newtheorem*{thma}{Theorem A}
\newtheorem*{thmb}{Theorem B}
\begin{document}
\title[Kähler-Einstein metrics with mixed Poincaré and cone singularities]{Kähler-Einstein metrics with mixed Poincaré and cone singularities along a normal crossing divisor}

\date{\today}
\author{Henri Guenancia}
\address{Institut de Mathématiques de Jussieu \\
Université Pierre et Marie Curie \\
 Paris 
\& Département de Mathématiques et Applications \\
\'Ecole Normale Supérieure \\
Paris}
\email{guenancia@math.jussieu.fr}
\urladdr{www.math.jussieu.fr/~guenancia}

\begin{abstract} 
Let $X$ be a Kähler manifold and $\D$ be a $\R$-divisor with simple normal crossing support and coefficients between $1/2$ and $1$. Assuming that $K_X+\D$ is ample, we prove the existence and uniqueness of a negatively curved Kahler-Einstein metric on $X\setminus \Supp(\D)$ having mixed Poincaré and cone singularities according to the coefficients of $\D$. As an application we prove a vanishing theorem for certain holomorphic tensor fields attached to the pair $(X,\D)$.
\end{abstract}

\maketitle

\tableofcontents

\newpage

\section*{Introduction}

Let $X$ be a compact Kähler manifold of dimension $n$, and $\D=\sum a_i \D_i$ an effective $\R$-divisor with simple normal crossing support such that the $a_i$'s satisfy the following inequality: $0 < a_i \le 1$. We write $X_0=X\setminus \Supp(\D)$.\\

Our local model is given by the product $X_{\rm mod}=(\mathbb{D}^*)^r\times (\mathbb{D}^*)^s \times \mathbb{D}^{n-(s+r)}$ where $\mathbb{D}$ (resp. $\mathbb{D}^*$) is the disc (resp. punctured disc) of radius $1/4$ in $\mathbb C$, the divisor being $D_{\rm mod}=d_1 [z_1=0]+\cdots+d_r [z_r=0]+[z_{r+1}=0]+\cdots + [z_{r+s}=0]$, with $d_i<1$. We will say that a metric $\om$ on $X_{\rm mod}$ has mixed Poincaré and cone growth (or singularities) along the divisor $D_{\rm mod}$ if there exists $C>0$ such that 
\[C^{-1} \omega_{\rm mod} \le \om \le C \,\om_{\rm mod}\]
where
\[\om_{\rm mod}:=\sum_{j=1}^r \frac{i dz_j\wedge d\bar z_j}{|z_j|^{2d_j}} + \sum_{j=r+1}^s \frac{i dz_j\wedge d\bar z_j}{|z_j|^{2} \log^{2}|z_j|^{2}}+\sum_{j=r+s+1}^n i dz_j\wedge d\bar z_j \] 
is simply the product metric of the standard cone metric on $(\mathbb{D}^*)^r$, the Poincaré metric on $(\mathbb{D}^*)^s$, and the euclidian metric on $\mathbb{D}^{n-(s+r)}$.\\

This notion makes sense for global (Kähler) metrics $\om$ on the manifold $X_0$; indeed, we can require that on each trivializing chart of $X$ where the pair $(X,\D)$ becomes $(X_{\rm mod},D_{\rm mod})$ (those charts cover $X$), $\om$ is equivalent to $\om_{\rm mod}$ just like above, and this does not depend on the chosen chart.\\

Our goal will then be to find, whenever this is possible, Kähler metrics on $X_0$ having constant Ricci curvature and mixed Poincaré and cone growth along $\D$. Those metrics will naturally be called Kähler-Einstein metrics. For reasons which will appear in section \ref{sec:kep} and more precisely in Remark \ref{rem:bm}, we will restrict ourselves to looking for Kähler-Einstein metrics with negative curvature.\\

The existence of Kähler-Einstein metrics (in the previously specified sense) has already been studied in various contexts and for multiple motivations. The logarithmic case (all coefficients of $\D$ are equal to $1$) has been solved when $K_X+\D$ is assumed to be ample by R. Kobayashi \cite{KobR} and G.Tian-S.T.Yau \cite{Tia}, the latter considering also orbifold coefficients for the fractional part $\Dklt=\sum_{\{a_i<1\}} a_i \D_i $ of $\D$, that is of the form $1-\frac 1 m $ for some integers $m>1$. Our main result extends this when the coefficients of $\Dklt$ are no longer orbifold coefficients, but are any real numbers $a_i\ge 1/2$ (condition which is realized if $a_i$ is of orbifold type):

\begin{thma}
Let $X$ be a compact Kähler manifold and $\D=\sum a_i \D_i$ a $\R$-divisor with simple normal crossing support such that $K_X+\D$ is ample. We assume furthermore that the coefficients of $\D$ satisfy the following inequalities:
\[ 1/2 \le a_i \le 1. \]
Then $X\setminus \Supp(\D)$ carries a unique Kähler-Einstein metric $\omke$ with curvature $-1$ having mixed Poincaré and cone singularities along $\D$. 
\end{thma}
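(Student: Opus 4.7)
The strategy is to reformulate the Kähler-Einstein equation on $X_0$ as a complex Monge-Ampère equation, solve a regularized family of honest MA equations on $X$, and obtain $\varepsilon$-independent a priori estimates which force the required mixed asymptotics in the limit. First, decompose $\D=\Dklt+\Dlc$ according to whether $a_i<1$ or $a_i=1$. Using a smooth Kähler form on $X$ and adding local $\ddc$-potentials of the form $-\sum_j |s_j|^{2(1-a_j)}$ (cone part) and $-\sum_k \log\log^2|s_k|^2$ (Poincaré part), glued by a partition of unity, one constructs a reference Kähler form $\om_0$ on $X_0$ in the class $c_1(K_X+\D)$ which, in any trivializing chart, is uniformly equivalent to $\om_{\rm mod}$. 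Writing $\omke=\om_0+\ddc\vp$, the equation $\Ric(\omke)=-\omke$ then becomes
\[
(\om_0+\ddc\vp)^n = e^\vp F\,\om_0^n,
\]
where $F$ is smooth and positive on $X_0$ with a controlled boundary behaviour absorbed into the normalisation of $\om_0$.

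Next, regularize by replacing $|s_i|^{2a_i}$ by $(|s_i|^2+\varepsilon^2)^{a_i}$ and $|s_k|^2\log^2|s_k|^2$ by $(|s_k|^2+\varepsilon^2)\log^2(|s_k|^2+\varepsilon^2)$; for each $\varepsilon>0$ this yields a smooth MA equation on the compact manifold $X$, which admits a unique smooth solution $\vpe$ by Aubin-Yau (the cohomology class being ample and the sign being the negative Ricci one). The heart of the proof is then to obtain $\varepsilon$-independent estimates. A uniform $L^\infty$ bound on $\vpe$ follows from the maximum principle together with uniform $L^p$-integrability of the regularized right-hand side. For the crucial Laplacian bound, I would apply the Chern-Lu inequality to a quantity of the form $\log\tre\om_0-A\vpe$: this requires an upper bound on the holomorphic bisectional curvature of a suitable approximation of $\om_0$. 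Once $\D\vpe$ is controlled, Evans-Krylov and interior Schauder estimates give higher-order bounds on compact subsets of $X_0$, hence $\cka$-convergence of a subsequence $\vpe\to\vp$ on $X_0$.

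The main obstacle is precisely this uniform Laplacian estimate, and it is here that the hypothesis $a_i\ge 1/2$ enters. For the one-dimensional cone metric $\frac{i\,dz\wedge d\b z}{|z|^{2a}}$, the holomorphic bisectional curvature is bounded from above only when $a\ge 1/2$ (the threshold corresponding to a cone angle of $\pi$); below this value the curvature of the model is unbounded above and the Chern-Lu method breaks down. One must then verify that mixing cone and Poincaré factors, and taking $\varepsilon$-regularizations, preserves a uniform upper bound on the bisectional curvature; this is the genuine technical content of the proof. Granting this, passing $\varepsilon\to 0$ yields a smooth $\vp$ on $X_0$ whose associated Kähler-Einstein metric $\omke=\om_0+\ddc\vp$ satisfies $C^{-1}\om_0\le\omke\le C\om_0$, which is exactly the mixed Poincaré/cone growth.

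Finally, uniqueness follows from a standard maximum principle argument for MA equations of negative Ricci type: two KE metrics with the prescribed asymptotics admit potentials whose difference is bounded on $X_0$ (since both are equivalent to $\om_0$), and subtracting the two MA equations, the monotonicity of $\vp\mapsto e^\vp$ together with an application of the maximum principle — justified by the fact that $(X_0,\omke)$ is complete with bounded geometry, so that the Yau maximum principle applies — forces the difference to vanish.
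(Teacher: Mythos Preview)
Your overall architecture differs from the paper's in a way that creates real problems. The paper does \emph{not} regularize the Poincaré part and does \emph{not} work on compact $X$. Instead it works throughout on the open manifold $\Xlc=X\setminus\Supp(\Dlc)$, equipped with a Carlson--Griffiths metric $\om$ having honest Poincaré singularities along $\Dlc$; only the cone (klt) factors are regularized, giving complete metrics $\ome$ on $\Xlc$, and each approximate equation is solved using the Kobayashi/Tian--Yau theory for the logarithmic case. Your proposal to replace $|s_k|^2\log^2|s_k|^2$ by $(|s_k|^2+\ep^2)\log^2(|s_k|^2+\ep^2)$ and solve on compact $X$ runs into the difficulty that the resulting right-hand side densities are not uniformly bounded (their supremum blows up as $\ep\to 0$, reflecting the infinite volume of the Poincaré end), so the elementary maximum-principle $C^0$ bound you invoke is not uniform in $\ep$. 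This is precisely why the paper keeps the Poincaré part exact.

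Second, the curvature input is stated with the wrong sign. The Laplacian estimate in the paper (its Proposition~\ref{cy2}) uses a uniform \emph{lower} bound on the holomorphic bisectional curvature of the reference metrics $\ome$, via the Aubin--Yau/Siu inequality, not an upper bound via Chern--Lu. Indeed, for the one-dimensional regularized cone model $(|z|^2+\ep^2)^{-a}\,idz\wedge d\bar z$ the Gaussian curvature equals $a\ep^2(|z|^2+\ep^2)^{a-2}\ge 0$, which is uniformly bounded below but \emph{not} above as $\ep\to 0$. The hypothesis $a_i\ge 1/2$ enters in the paper (and in \cite{CGP}) in bounding the bisectional curvature of $\ome$ from below once one mixes the approximate cone factors with a background K\"ahler form and the Poincar\'e factors; it is not an upper-bound phenomenon.

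Finally, your uniqueness argument has a gap: $(X_0,\omke)$ is \emph{not} complete as soon as $\Dklt\neq 0$ (cone ends have finite diameter), so Yau's generalized maximum principle does not apply directly. The paper instead shows that any such K\"ahler--Einstein metric extends to a current $\om_0+\ddc\vp$ with $\vp\in\mathcal E^1(X,\om_0)$ solving a global Monge--Amp\`ere equation, and then invokes the comparison principle of Guedj--Zeriahi in the finite-energy class; this pluripotential-theoretic step is what replaces completeness.
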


The conic case, \textit{ie} when the coefficients of $\D$ are stricly less than $1$), under the assumption that $K_X+\D$ is positive or zero, has been studied by R. Mazzeo \cite{Maz}, T. Jeffres \cite{Jef} and recently resolved independently by S. Brendle \cite{Brendle} and R. Mazzeo, T. Jeffres, Y. Rubinstein \cite{MJR} in the case of an (irreducible) smooth divisor, and by \cite{CGP} in the general case of a simple normal crossing divisor (having though all its coefficients greater than $\frac 1 2$). In the conic case where $K_X+\D<0$, some interesting existence results were obtained by R. Berman in \cite{rber} and T. Jeffres, R. Mazzeo and Y. Rubinstein in \cite{MJR}. Let us finally mention that in \cite{MJR}, it is proved that the potential of the Kähler-Einstein metric has polyhomogeneous expansion, which is much stronger than the assertion on the cone singularities of this metric.\\

Let us now give a sketch of the proof by detailing the organization of the paper.\\ 

The first step is, as usual, to relate the existence of Kähler-Einstein metrics to some particular Monge-Ampère equations. We explain this link in Proposition \ref{prop:ke}. The idea is that any negatively curved normalized Kähler-Einstein metric on $X_0$ with appropriate boundary conditions extends to a Kähler current of finite energy in $c_1(K_X+\D)$ satisfying on $X$ a Monge-Ampère equation of the type $\om_{\vp}^n =e^{\vp-\vp_{\D}}  \om^n$ where $\om$ is a Kähler form on $X$, and $\vp_{\D}=\sum a_i \log |s_i|^2+(\mathrm{smooth \, terms})$. One may observe that as soon as some $a_i$ equals $1$, the measure $e^{-\vp_{\D}}\om^n$ has infinite mass.

The uniqueness of the solution metric will then follow from the so-called comparison principle established by V.Guedj and A.Zeriahi for this special class of finite energy currents. \\

We are then reduced to solving some singular Monge-Ampère equation. The strategy consists in working on the open manifold $\Xlc:=X\setminus \Dlc$, and we are led to the following equation: $\om_{\vp}^n =e^{\vp-\vp_{\Dklt}}  \om^n$ where this time $\om$ is a Kähler form on $\Xlc$ with Poincaré singularities along $\Dlc$, and $\vp_{\Dklt}=\sum_{\{a_i<1\}} a_i \log |s_i|^2+(\mathrm{smooth \, terms})$. If $\vp_{\Dklt}$ were smooth, one could simply apply the results of Kobayashi and Tian-Yau. As it is not the case, we adapt the strategy of Campana-Guenancia-P\u{a}un to this setting:\\

 We start in section \ref{subs:approx} by regularizing $\vp_{\Dklt}$ into a smooth function (on $\Xlc$) $\vp_{\Dklt, \ep}$ and introducing smooth approximations $\ome$ of the cone metric on $\Xlc$ having Poincaré singularities along $\Dlc$.  Then we consider the regularized equation $\om_{\vpe}^n =e^{\vpe-\vp_{\Dklt,e}} \ome^n$ which we can solve for every $\ep>0$ (we are in the logarithmic case). The point is to construct our desired solution $\vp$ as the limit of $(\vpe)_{\ep}$; this is made possible by controlling (among other things) the curvature of $\ome$, and applying appropriate \textit{a priori} laplacian estimates which we briefly explain in section \ref{sec:estimates}. The final step is standard: it consists in invoking Evans-Krylov $\mathscr C^{2,\alpha}$ interior estimates, and concluding that $\vp$ is smooth on $X_0$ using Schauder estimates.\\ \\
 
In the last part of the paper, and as in \cite{CGP}, we try to use the Kähler-Einstein metric constructed in the previous sections to obtain the vanishing of some particular holomorphic tensors attached to a pair $(X,\D)$, $\D$ being still a $\R$-divisor with simple normal crossing support and having coefficients in $[0,1]$. This specific class consists in the holomorphic tensors which are the global sections of the locally free sheaf $T^r_s(X|\D)$ introduced by Campana in \cite{Camp1}: they are holomorphic tensors with prescribed zeros or poles along $\D$. Thanks to their realization as bounded tensors with respect to some (or equivalently, any) twisted metric $g$ with mixed cone and Poincaré singularities along $\D$, given in Proposition \ref{prop:bdo} we can use Theorem A to prove the following:

\begin{thmb}
Let $(X,\D)$ be a pair satisfying the assumptions of Theorem A. Then, there is no non-zero holomorphic tensor of type $(r,s)$ whenever $r\ge s+1$:
\[ H ^0(X,T^{r}_s(X|\D))=0.\]
\end{thmb}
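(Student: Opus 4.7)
Let $\tau \in H^0(X, T^r_s(X|\D))$ and set $u := |\tau|^2_{\omke}$, where $\omke$ is the Kähler-Einstein metric produced by Theorem A. The plan is to combine a Bochner-Weitzenböck argument on the hermitian holomorphic bundle $T^r_s(X) = (TX)^{\otimes r} \otimes (T^*X)^{\otimes s}$, equipped with the metric induced by $\omke$, with a maximum principle. Since $\omke$ has mixed cone and Poincaré singularities along $\D$ by Theorem A, Proposition \ref{prop:bdo} applied to $g = \omke$ immediately yields $u \in L^{\infty}(X_0)$.

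The Bochner-Kodaira formula for the holomorphic section $\tau$ then computes the mean curvature of $T^r_s(X)$ in terms of the Ricci of $\omke$. Using the Einstein condition $\Ric \omke = -\omke$, each of the $r$ factors $TX$ contributes $+u$ to the zeroth-order term while each of the $s$ factors $T^*X$ contributes $-u$, for a total of $(r-s)\,u$. The identity thus reads, on $X_0$,
\[\Delta_{\omke}\, u \;=\; |\nabla \tau|^2_{\omke} \,+\, (r-s)\, u \;\ge\; u,\]
where the last inequality uses $r \ge s+1$.

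To deduce $u \equiv 0$ from the bound $u \in L^{\infty}(X_0)$ and the inequality $\Delta_{\omke} u \ge u$, the difficulty is that $(X_0,\omke)$ is complete along $\Dlc$ (Poincaré cusps) but incomplete along $\Dklt$ (cone directions have finite distance to the divisor), so Yau's generalised maximum principle cannot be invoked directly. I would instead transfer the inequality onto the smooth approximants $\ome$ built in Section \ref{subs:approx}: each pair $(\Xlc, \ome)$ is a complete Kähler manifold with uniformly bounded-below Ricci curvature, so Yau's principle furnishes an almost-maximising sequence for $|\tau|^2_{\ome}$ along which $\Delta_{\ome}|\tau|^2_{\ome}$ is asymptotically non-positive. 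Combining this with an approximate Bochner inequality, the $\ep$-uniform $L^{\infty}$ control of $|\tau|^2_{\ome}$ (again via Proposition \ref{prop:bdo}), and the convergence $\ome \to \omke$, passing to the limit $\ep \to 0$ should force $\sup_{X_0} u \le 0$, hence $\tau \equiv 0$.

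The main obstacle I anticipate is the careful handling of the cone directions in this limiting procedure: one must ensure that the almost-maximising sequence for $|\tau|^2_{\ome}$ does not escape into the cone locus $\Dklt$ in a way that spoils the approximate Bochner identity, and that the curvature contribution $(r-s)|\tau|^2_{\ome}$ converges to $(r-s)u$ with the correct sign uniformly in $\ep$. The uniform Laplacian and curvature estimates for $\ome$ established in Section \ref{sec:estimates} should provide exactly the control needed.
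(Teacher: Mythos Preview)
Your reading of Proposition~\ref{prop:bdo} is not correct, and this breaks the argument at the very first step. The space $\mathscr H^{r,s}_B(X|\D)$ consists of holomorphic sections bounded for the \emph{twisted} metric $h=g_{r,s}\otimes h_L$, where $h_L=\prod_{k\in K}(\log\frac{1}{|s_k|^2})^{-2s}$ on the trivial line bundle. What Proposition~\ref{prop:bdo} gives you is $|\tau|^2_h\in L^\infty(X_0)$, not $|\tau|^2_{\omke}\in L^\infty(X_0)$. When $s\ge 1$ and $\Dlc\neq\emptyset$, these are genuinely different: for instance with two Poincaré directions $z_1,z_2$ and $(r,s)=(2,1)$, the local generator $z_2^{2}\,\partial_{z_2}^{\otimes 2}\otimes\frac{dz_1}{z_1}$ of $T^2_1(X|\D)$ has untwisted norm $\simeq(\log\frac{1}{|z_1|^2})^{2}(\log\frac{1}{|z_2|^2})^{-4}$, which blows up as $z_1\to 0$. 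So your $u=|\tau|^2_{\omke}$ need not be bounded, and neither Yau's maximum principle nor its transfer to the approximants $\ome$ gets off the ground. (The paper in fact singles out exactly this phenomenon before introducing the twist.)

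The paper's proof is organised around this twist. One works with $|\tau|^2_h$ instead, applies a \emph{twisted} integral Bochner formula to $\theta_\ep\tau$ for a logarithmic cut-off $\theta_\ep$, and two new terms must be handled: the curvature contribution $\gamma=\tr_{\omke}\Theta_{h_L}(L)$ of the twist, and the error $\int_{X_0}|\bar\partial(\theta_\ep\tau)|^2_h\,dV_{\omke}$ created by the cut-off. The paper shows $\gamma\le\frac12$ (after rescaling the hermitian metrics on the $\D_k$), so the zeroth-order term becomes $(s-r+\frac12)|\tau|^2_h<0$ for $r\ge s+1$, and it shows the cut-off error tends to $0$ using the precise mixed Poincaré--cone asymptotics of $\omke$. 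Your maximum-principle scheme via $\ome$ is not a substitute: besides the unboundedness of $|\tau|^2_{\omke}$, the metrics $\ome$ are not Kähler--Einstein, so no clean $(r-s)u$ term appears in the Bochner identity for them, and there is no evident ``approximate Bochner inequality'' with a uniformly favourable sign. If you want to salvage a pointwise argument, you would at minimum have to work with $|\tau|^2_h$ and keep track of the twist curvature, at which point you are essentially redoing the paper's computation.
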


The proof of this results follows closely the one of its analogue in \cite{CGP}: we use a Bochner formula applied to the truncated holomorphic tensors, and the key point is to control the error term. However, a new difficulty pops up here, namely we have to deal with an additional term coming from the curvature of the line bundle $\Ox(\lfloor \D \rfloor)$; fortunately, it has the right sign.

\section*{Acknowledgments}

I am very grateful to Sébastien Boucksom for his patient and careful reading of the preliminary versions, and his several highly valuable comments and suggestions to improve both the organization and the content of this paper.

\noindent
I would like to also thank warmly Mihai P\u aun for the precious help he gave me to elaborate the last section of this article.

\section{Preliminaries}

In this first section devoted to the preliminaries, we intend to fix the notations and the scope of this paper. We also recall some useful objects introduced in \cite{KobR} and \cite{Tia} within the framework of the logarithmic case; finally, we explain briefly some a priori estimates which are going to be some of our main tools in the proof of the main theorem.

\subsection{Notations and definitions}

All along this work, $X$ will be a compact Kähler manifold of complex dimension $n$. We will consider effective $\R$-divisors $\D=\sum a_i \D_i$ with simple normal crossing support, and such that their coefficients $a_i$ belong to $[0,1]$.

\noindent
It will be practical to separate the hypersurfaces $\D_i$ appearing with coefficient $1$ in $\D$ from the other ones. For this, we write:
\begin{eqnarray*}
\D&=&\sum_{\{a_i<1\}} a_i \D_i + \sum_{\{a_i=1\}} \D_i\\
&=& \Dklt+\Dlc
\end{eqnarray*}

\noindent
These notations come from the framework of the pairs in birational geometry; klt stands for \textit{Kawamata log-terminal} whereas lc means \textit{log-canonical}. In this language, $(X,\D)$ is called a log-smooth lc pair, and $(X,\Dklt)$ is a log-smooth klt pair. Apart from these practical notations, we will not use this terminology.\\

We will denote by $s_i$ a section of $\Ox(\D_i)$ whose zero locus is the (smooth) hypersurface $\D_i$, and, omitting the dependance in the metric, we write $\Theta(\D_i)$ the curvature form of $(\Ox(\D_i),h_i)$ for some hermitian metric on $\Ox(\D_i)$. Up to scaling the $h_i$'s, one can assume that $|s_i| \le e^{-1}$, and we will make this assumption all along the paper. Finally, we set $X_0:=X\setminus \Supp(\D)$ and $\Xlc:=X\setminus \Supp(\Dlc)$.\\

In the introduction, we introduced a natural class of growth of Kähler metrics near the divisor $\D$ which we called metrics with mixed Poincaré and cone singularities along $\D$. They are the Kähler metrics locally equivalent to the model metric $\om_{\rm mod}= \sum_{j=1}^r \frac{i dz_j\wedge d\bar z_j}{|z_j|^{2d_j}} + \sum_{j=r+1}^s \frac{i dz_j\wedge d\bar z_j}{|z_j|^{2} \log^{2}|z_j|^{2}}+\sum_{j=r+s+1}^n i dz_j\wedge d\bar z_j$ whenever the pair $(X,\D)$ is locally isomorphic to $(X_{\rm mod}, D_{\rm mod})$ with $X_{\rm mod}=(\mathbb{D}^*)^r\times (\mathbb{D}^*)^s \times \mathbb{D}^{n-(s+r)}$ and $D_{\rm mod}=d_1 [z_1=0]+\cdots+d_r [z_r=0]+[z_{r+1}=0]+\cdots + [z_{r+s}=0]$, with $d_i<1$.

\noindent
The following elementary lemma ensures that given a pair $(X,\D)$ as above, Kähler metrics with mixed Poincaré and cone singularities along $\D$ always exist:

\begin{lemm} 
\phantomsection
\label{lem:cg}
The following $(1,1)$-form
\[\om_{\D}:= \om_0+\sum_{\{a_i<1\}} \ddc |s_i|^{2(1-a_i)}-\sum_{\{a_i=1\}} \ddc \log \log \frac{1}{|s_i|^2} \]
defines a Kähler form on $X_0$ as soon as $\om_0$ is a sufficiently positive Kähler metric on $X$. Moreover, it has mixed Poincaré and cone singularities along $\D$. 
\end{lemm}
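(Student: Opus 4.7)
The plan is to dissect each summand into a positive part producing the desired cone or Poincaré growth in one direction, and a smooth, uniformly bounded error that can be absorbed by a large $\om_0$. Setting $\psi_i:=\log|s_i|^2$ and $L_i:=-\psi_i$, the Poincaré--Lelong identity reads $\ddc\psi_i=-\Theta(\D_i)$ on $X_0$. Direct computation then yields, for $a_i<1$,
\[\ddc|s_i|^{2(1-a_i)} \;=\; (1-a_i)^2|s_i|^{2(1-a_i)}\,d\psi_i\wedge d^c\psi_i \;-\; (1-a_i)|s_i|^{2(1-a_i)}\Theta(\D_i),\]
and for $a_i=1$,
\[-\ddc\log L_i \;=\; \frac{d\psi_i\wedge d^c\psi_i}{L_i^2} \;-\; \frac{\Theta(\D_i)}{L_i}.\]
Since $|s_i|\le e^{-1}$, the factors $|s_i|^{2(1-a_i)}$ and $1/L_i$ are globally bounded, so the full $\Theta$-contribution assembles into a smooth bounded real $(1,1)$-form $\eta$ on $X$. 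Taking $\om_0$ large enough that $\om_0+\eta$ is still Kähler (simply scale a fixed Kähler form), the remaining positive $(1,1)$-forms on $X_0$ make $\om_\D$ a Kähler metric on $X_0$.

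For the mixed singularities I would check the equivalence $C^{-1}\om_{\rm mod}\le\om_\D\le C\om_{\rm mod}$ locally, on each chart where $(X,\D)$ is modelled on $(X_{\rm mod},D_{\rm mod})$. Writing $|s_i|^2=|z_i|^2 e^{-\phi_i}$ for some smooth local $\phi_i$, one has $d\psi_i=\tfrac{dz_i}{z_i}-d\phi_i$, so $d\psi_i\wedge d^c\psi_i$ is equivalent to $\tfrac{i\,dz_i\wedge d\bar z_i}{|z_i|^2}$ up to a bounded $(1,1)$-form. For an index $i\le r$ in the klt part, the factor $(1-a_i)^2|s_i|^{2(1-a_i)}$ is equivalent to $|z_i|^{2(1-a_i)}$, and multiplication produces a contribution equivalent to the cone piece $\tfrac{i\,dz_i\wedge d\bar z_i}{|z_i|^{2a_i}}$ of $\om_{\rm mod}$. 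For $r<i\le r+s$ in the lc part, dividing by $L_i^2$ (which is equivalent to $(\log|z_i|^2)^2$) yields the Poincaré piece $\tfrac{i\,dz_i\wedge d\bar z_i}{|z_i|^2\log^2|z_i|^2}$. In the remaining transverse directions $\om_0$ itself is equivalent to the Euclidean form, closing the local comparison.

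The main (mild) technical obstacle I expect is the book-keeping of cross terms: between distinct components $\D_i,\D_j$ meeting at the same point, and between a singular coordinate $z_i$ and the smooth weight $\phi_i$ coming from the local trivialization of $\Ox(\D_i)$. All such terms carry an extra factor that vanishes as one approaches $\D$, namely a power $|z_i|^{\varepsilon}$ with $\varepsilon>0$ in the klt case and a factor $1/|\log|z_i||$ in the lc case, so they are strictly lower order than the corresponding diagonal positive contributions and can be absorbed either by those diagonal terms or by a further enlargement of $\om_0$. Once this is carefully checked, the two-sided estimate $C^{-1}\om_{\rm mod}\le\om_\D\le C\om_{\rm mod}$ holds on every such chart, establishing the mixed Poincaré and cone singularities of $\om_\D$ along $\D$.
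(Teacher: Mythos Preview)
Your argument is correct and is essentially the standard computation that the paper defers to via the references to Claudon, Carlson--Griffiths and Griffiths; the paper's own proof consists of a single sentence citing those sources, and what you wrote is precisely the content of those computations. One small slip: the error form $\eta$ you assemble from the curvature terms is not literally \emph{smooth} on $X$ (the coefficients $|s_i|^{2(1-a_i)}$ and $1/L_i$ are only continuous across $\D_i$), but it is uniformly bounded, which is all you use to absorb it into $\om_0$.
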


\begin{proof}
This can be seen by a simple computation: combine e.g. \cite[Proposition 2.1]{Clodo} with \cite[Proposition 2.1]{CG} or \cite[Proposition 2.17]{Gri}.
\end{proof}

Before we end this paragraph, we would like to emphasize the different role played by the $\D_i$'s whether they appear in $\D$ with coefficient $1$ or stricly less than $1$. Here is some explanation: let $0<\alpha<1$ be a real number, and $\om_{\alpha}=\frac{(1-\alpha)^2 i dz \wedge d\bar z}{|z|^{2\alpha}(1-|z|^{2(1-\alpha)})^2}$; its curvature is constant equal to $-1$ on the punctured disc $\mathbb{D}^*$, and it has a cone singularity along  the divisor $\alpha [0]$. Then, when $\alpha$ goes to $1$, $\omega_{\alpha}$ converges pointwise to the Poincaré metric $\omega_P=\frac{i dz \wedge d\bar z}{|z|^2 \log^2 |z|^2}$.\\\\

\emph{In the following, any pair $(X,\D)$ will be implicitely assumed to be composed of a compact Kähler manifold $X$ and a $\R$-divisor $\D$ on $X$ having simple normal crossing support and coefficients belonging to $[0,1]$.}\\

\subsection{Kähler-Einstein metrics for pairs}
\label{sec:kep}

As explained in the introduction, the goal of this paper is to find a Kähler metric on $X_0$ with constant Ricci curvature, and having mixed Poincaré and cone singularities along the given divisor $\D$. The second condition is essential and as important as the first one; the proof of the vanishing theorem for holomorphic tensors in the last section will render an account of this and shall surely convince the reader. Let us state properly the definition:

\begin{defi}
\phantomsection
\label{def:ke}
A Kähler-Einstein metric for a pair $(X,\D)$ is defined to be a Kähler metric $\om$ on $X_0$ satisfying the following properties:
\begin{enumerate}
\item[$\bullet$] $\Ric \om = \mu \, \om\, \,$  for some real number $\mu$;
\item[$\bullet$] $\om$ has mixed Poincaré and cone singularities along $\D$.
\end{enumerate}
\end{defi}

\begin{rema}
\phantomsection
\label{rem:bm}
Unlike cone singularities, Poincaré singularities are intrinsically related to negative curvature geometry:
\begin{enumerate}
\item[$\cdot$]
The Bonnet-Myers Theorem  tells us that in the case where $\Dklt=0$ (so that we work with complete metrics), there cannot exist Kähler-Einstein metrics in the previous sense with $\mu > 0$. However, if $\Dlc=0$, there may exist Kähler-Einstein metrics with positive curvature, and the question of their existence is often a difficult question (see e.g. \cite{BBEGZ} or \cite{rber}).
\item[$\cdot$]
As for the Ricci-flat case ($\mu=0$), it also has to be excluded. Indeed, there cannot be any Ricci-flat metric on the punctured disc $\mathbb{D}^*$ with Poincaré singularity at $0$; to see this, we write $\om = \frac{i}{2}e^{2u}dz \wedge d\bar z$ such a metric, and then $u$ has to satisfy the following properties: $u$ is harmonic on $\mathbb{D}^*$ and $e^{2u}$ behaves like $\frac 1 {|z|^2 \log^2 |z|^2}$ near $0$, up to constants. But it is well-known that any harmonic function $u$ on $\D^*$ can be written $u=\mathrm{Re}(f)+c \log |z|$ for some holomorphic function $f$ on $\mathbb{D}^*$ and some constant $c\in \R$. Clearly, $f$ cannot have an essential singularity at $0$; moreover, because of the logarithmic term in the Poincaré metric, $f$ can neither be bounded, nor have a pole at $0$. This ends to show that in general (and for local reasons), there does not exist Ricci-flat Kähler-Einstein metric in the sense of the previous definition (whenever $\Dlc \neq 0$).
\end{enumerate}
For these reasons, we will focus in the following on the case of negative curvature, which we will normalize in $\mu =-1$.
\end{rema}

\subsection{The logarithmic case}
\label{sec:log}
For the sake of completeness, we will briefly recall in this section the proof of the main result (Theorem \ref{main}) in the logarithmic case, namely when $\D=\Dlc$, \textit{ie} when $\Dklt=0$. As we already explained, this was achieved by Kobayashi \cite{KobR} and Tian-Yau \cite{Tia} in a very similar way. In this section, we will assume that $(X,\D)$ is logarithmic, so that $X_0=\Xlc$. \\

We will use the following terminology which is convenient for the following:

\begin{defi}
\phantomsection
\label{def:cg}
We say that a Kähler metric $\om$ on $X_0$ is of Carlson-Griffiths type if there exists a Kähler form $\om_0$ on $X$ such that 
$\om=\om_0-\sum_K \ddc \log \log \frac 1{|s_k|^{2}}$.
\end{defi}

As observed in Lemma \ref{lem:cg}, such a metric always exists, and it has Poincaré singularities along $\D$. In \cite{CG}, Carlson and Griffiths introduced such a metric for some $\om_0 \in c_1(K_X+\D)$. The reason why we exhibit this particular class of Kähler metric on $X_0$ having Poincaré singularities along $\D$ is that we have an exact knowledge on its behaviour along $\D$, much more precise that its membership of the previously cited class. For example, Lemma \ref{lem:cka} mirrors this fact.\\

We start from a compact Kähler manifold $X$ with a simple normal crossing divisor $\D=\sum \D_k$ such that $K_X+\D$ is ample. We want to find a Kähler metric $\omke$ on $X_0=X\setminus \D$ with $-\Ric \omke = \omke$, and having Poincaré singularities along $\D$. If we temporarily forget the boundary condition, the problem amounts to solve the following Monge-Ampère equation on $X_0$:
\[(\om+\ddc \vp)^n = e^{\vp+F}\om^n\]
where $\om$ is a Kähler metric on $X_0$ of Carlson-Griffiths type (cf. Definition \ref{def:cg}), and $F= -\log \left( \prod |s_k|^2 \log^2 |s_k|^2 \cdot \om^n/\om_0^n \right)+(\mathrm{smooth \, terms \, on\,} X$) for some Kähler metric $\om_0$ on $X$. \\

The key point is that $(X_0,\om)$ has bounded geometry at any order. Let us get a bit more into the details. To simplify the notations, we will assume that $\D$ is irreducible, so that locally near a point of $\D$, $X_0$ is biholomorphic to $\mathbb{D}^* \times \mathbb{D}^{n-1}$, where $\mathbb{D}$ (resp. $\mathbb{D}^*$) is the unit disc (resp. punctured disc) of $\mathbb C$. We want to show that, roughly speaking, the components of $\om$ in some appropriate coordinates have bounded derivatives at any order. The right way to formalize it consists in introducing quasi-coordinates: they are maps from an open subset $V\subset \mathbb C^n$ to $X_0$ having maximal rank everywhere. So they are just locally invertible, but these maps are not injective in general. \\
To construct such quasi-coordinates on $X_0$, we start from the univeral covering map $\pi:\mathbb{D}\to\mathbb{D}^*$, given by $\pi(w)=e^{\frac{w+1}{w-1}}$. Formally, it sends $1$ to $0$. The idea is to restrict $\pi$ to some fixed ball $B(0,R)$ with $1/2<R<1$, and compose it (at the source) with a biholomorphism $\Phi_{\eta}$ of $\mathbb{D}$ sending $0$ to $\eta$, where $\eta$ is a real parameter which we will take close to $1$. If want to write a formula, we set $\Phi_{\eta}(w)=\frac{w+\eta}{1+\eta w}$, so that the quasi-coordinate maps are given by
$\Psi_{\eta}=\pi\circ \Phi_{\eta}\times \mathrm{Id}_{\mathbb{D}^{n-1}}:V=B(0,R)\times \mathbb{D}^{n-1}\to \mathbb{D}^*$, with $\Psi_{\eta}(v,v_2, \ldots, v_n)=(e^{\frac{1+\eta}{1-\eta} \frac{v+1}{v-1}}, v_2, \ldots, v_n)$.\\
Once we have said this, it is easy to see that $X_0$ is covered by the images $\Psi_{\eta}(V)$ when $\eta$ goes to $1$, and for all the trivializing charts for $X$, which are in finite number. Now, an easy computation shows that the derivatives of the components of $\om$ with respect to the $v_i$'s are bounded uniformly in $\eta$. This can be thought as a consequence of the fact that the Poincaré metric is invariant by any biholomorphism of the disc.\\

At this point, it is natural to introduce the Hölder space of $\cka$-functions on $X_0$ using the previously introduced quasi-coordinates:
\begin{defi}
\phantomsection
\label{def:cka}
For a non-negative integer $k$, a real number $\alpha \in ]0.1[$, we define: 
\[\cka(X_0)=\{u\in \mathscr C^k (X_0);\,\, \sup_{V, \eta} ||u\circ \Psi_{\eta} ||_{k,\alpha}<+\infty\}\]
where the supremum is taken over all our quasi-coordinate maps $V$ (which cover $X_0$). Here $||\cdot ||_{k,\alpha}$ denotes the standard $\cka$-norm for functions defined on a open subset of $\mathbb C^n$.
\end{defi}

The following fact, though easy, is very important for our matter:
\begin{lemm}
\phantomsection
\label{lem:cka}
Let $\om$ be a Carlson-Griffiths type metric on $X_0$, and $\om_0$ some Kähler metric on $X$. Then 
\[F_0:=\log \left(\prod |s_k|^2 \log^2 |s_k|^2\cdot  \om^n /\om_0^n \right)\]
belongs to the space $\cka(X_0)$ for every $k$ and  $\alpha$.
\end{lemm}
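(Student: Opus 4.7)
The statement is local, and at any $p\in X_0$ the quasi-coordinate maps reduce to the identity so the claim is trivial. I will concentrate on $p\in\Supp\D$. Fix local coordinates $(z_1,\ldots,z_n)$ on a polydisc $U\ni p$ so that $\D|_U = \sum_{k=1}^r\{z_k=0\}$, write $|s_k|^2 = |z_k|^2 e^{-g_k}$ with $g_k\in\mathscr C^\infty(U)$, and put $L_k = -\log|z_k|^2$, $\psi_k = L_k + g_k = -\log|s_k|^2$. Setting $\kappa_k := \frac{1+\eta_k}{1-\eta_k}$, the quasi-coordinate $\Psi_\eta$ acts on the divisor variables by $z_k = \exp\tau_k$, $\tau_k = \kappa_k\frac{v_k+1}{v_k-1}$, and by $z_k=v_k$ otherwise; its complex Jacobian matrix in $v$-coordinates is therefore the diagonal matrix $J = \mathrm{diag}(z_1\tau_1',\ldots,z_r\tau_r',1,\ldots,1)$.

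My plan is to pull back the entire expression through $\Psi_\eta$ and expose the cancellation of the would-be singular contributions. Since $\om_0$ is smooth on $U$, the identity $\Psi_\eta^*\om_0^n = n!\,|\det J|^2\,(\det G_0\circ\Psi_\eta)\,dV_v$ holds, where $G_0$ is the matrix of $\om_0$ in $z$-coordinates and $dV_v$ is the Euclidean volume form in $v$. Combining this with $\Psi_\eta^*(|s_k|^2\log^2|s_k|^2) = e^{-g_k\circ\Psi_\eta}|z_k|^2\psi_k^2$, cancellation of $\prod_k|z_k|^2$ between $|\det J|^2$ and the product of the $|s_k|^2$ yields
\[\Psi_\eta^*\!\left(\prod_k|s_k|^2\log^2|s_k|^2 \cdot \frac{\om^n}{\om_0^n}\right) = e^{-\sum_k g_k\circ\Psi_\eta}\cdot\frac{\Psi_\eta^*\om^n/dV_v}{n!\,\det G_0\circ\Psi_\eta}\cdot\prod_{k=1}^r\frac{\psi_k^2}{|\tau_k'|^2}.\]
It then suffices to show that each of the four factors is $\cka$-bounded uniformly in $\eta$ and positive. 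The first is the Carlson-Griffiths volume form in quasi-coordinates, whose bounds are exactly the bounded-geometry statement from the discussion preceding Definition \ref{def:cka}. For the middle two, since $\partial_{v_k}\Psi_\eta^{(k)} = z_k\tau_k' = O\!\bigl(\kappa_k e^{-\kappa_k(1-|v_k|^2)/|v_k-1|^2}\bigr)$ decays exponentially in $\kappa_k$ (together with all higher derivatives), any composition of a $\mathscr C^\infty(U)$-function with $\Psi_\eta$ is uniformly $\cka$-bounded in $\eta$ and positivity is preserved. Finally, a direct computation gives
\[\frac{L_k\circ\Psi_\eta}{|\tau_k'|} = \frac{2\kappa_k(1-|v_k|^2)/|v_k-1|^2}{2\kappa_k/|v_k-1|^2} = 1-|v_k|^2,\]
which is independent of $\eta$ and smooth positive on $B(0,R)$; combined with the $O(1/\kappa_k)$ correction $g_k/|\tau_k'|$, this gives uniform $\cka$-bounds and positivity for $\psi_k^2/|\tau_k'|^2$.

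The main obstacle is precisely this cancellation of the singular factors: individually, $\Psi_\eta^*\log|s_k|^2$ and $\Psi_\eta^*\log(\om^n/\om_0^n)$ each grow linearly in $\kappa_k\to\infty$, but the Jacobian factor $|\tau_k'|$ produced by $\Psi_\eta$ matches the logarithmic factor $L_k$ coming from $\log|s_k|^2$ exactly, turning their sum into the bounded quantity $2\log(1-|v_k|^2)$. Once this is observed, the displayed factorization together with the bounds above gives $F_0\in\cka(X_0)$.
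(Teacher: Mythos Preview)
Your argument is correct, modulo a small labeling slip: in the displayed factorization the first factor is $e^{-\sum_k g_k\circ\Psi_\eta}$, not the Carlson--Griffiths volume, so your enumeration ``first / middle two / finally'' does not match the order in which the factors appear. The content is fine once one reads ``the first'' as $\Psi_\eta^*\om^n/dV_v$. You should also make explicit that bounded geometry gives not only uniform $\cka$-bounds on the components of $\Psi_\eta^*\om$ but also a uniform lower bound on its determinant; otherwise the passage to $F_0=\log(\cdot)$ at the end is not justified.

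Your route differs from the paper's. The paper first observes that $F_0$ is bounded (so one may work with $e^{F_0}$), then expands $\om^n/\om_0^n$ in local coordinates via a forward reference to Lemma~\ref{lem:calc}, reducing the question to showing that the three scalar model functions $1/\log|z|^2$, $|z|^2\log|z|^2$, $|z|^2\log^2|z|^2$ lie in $\cka(\mathbb{D}^*)$, which is then checked by the same quasi-coordinate computation you perform. You instead keep $\om^n$ as a single block and let the Jacobian of $\Psi_\eta$ absorb the $\prod|z_k|^2$ factor; the cancellation $L_k/|\tau_k'|=1-|v_k|^2$ is exactly the paper's computation of $1/\log|z|^2$ in quasi-coordinates, but packaged geometrically. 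Your approach is cleaner in that it avoids the determinant expansion and the forward reference, at the price of invoking the full bounded-geometry statement (including the lower bound) for $\Psi_\eta^*\om$ as a black box; the paper's approach is more elementary once the expansion is in hand, and isolates precisely which scalar functions need to be controlled.
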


\begin{proof}
The first remark is that $F_0$ is bounded (cf. \cite[Lemma 1.(ii)]{KobR} or the beginning of section \ref{subs:lb}), and $F_0\in\cka(X_0)$ if and only if $e^{F_0}\in\cka(X_0)$. So in the following, we will deal with $e^{F_0}$. \\
Then, as the (elementary) computations of Lemma \ref{lem:calc} show, it is enough to check that the functions on $\mathbb{D}^*$ (say with radius 1/2) defined by $z\mapsto \frac{1}{\log |z|^2}, z \mapsto |z|^2 \log |z|^2$ and $z \mapsto |z|^2 \log^2 |z|^2$ are in $\cka(\mathbb{D}^*)$. But in the quasi-coordinates given by $\Phi_{\eta}$, $\frac{1}{\log |z|^2}= \frac 1 2\cdot\frac{1-\eta}{1+\eta} \frac{|v|^2-1}{|v-1|^2}$ and $|z|^2 \log^{\alpha} |z|^2=
\left(\frac 1 2\cdot\frac{1+\eta}{1-\eta} \frac{|v-1|^2}{|v|^2-1}\right)^{\alpha} e^{2\cdot\frac{1+\eta}{1-\eta} \frac{|v|^2-1}{|v-1|^2}}$, for $v\in B(0,R)$ with $R<1$, and where $\alpha \in \R$. Now there is no difficulty in seeing that these two functions of $v$ are bounded when $\eta$ goes to $1$ (actually this property does not depend on the chosen coordinates), and so are their derivatives (still with respect to $v$); this is obvious for the first function, and for the second one, it relies on the fact that $x^m e^{-x}$ goes to $0$ as $x\to +\infty$, for all $m\in \Z$.
\end{proof}

The end of the proof consists in showing that the Monge-Ampère equation $(\om+\ddc \vp)^n = e^{\vp+f}\om^n$ has a unique solution $\vp\in \cka(X_0)$ for all functions $f\in \cka(X_0)$ with $k\ge 3$. This can be done using the continuity method in the quasi-coordinates. In particular, applying this to $f=F$ (cf beginning of the section), which the previous lemma allows to do, this will prove the existence of a negatively curved Kähler-Einstein metric, which is equivalent to $\om$ (in the \emph{strong} sense: $\vp\in \cka(X_0)$ for all $k,\alpha$).\\

To summarize, the theorem of Kobayashi and Tian-Yau is the following:

\begin{theo}
\phantomsection
\label{thm:kob}
Let $(X,\D)$ be a logarithmic pair, $\om$ a Kähler form of Carlson-Griffiths type on $X_0$, and $F\in \cka(X_0)$ for some $k\ge 3$. Then there exists $\vp \in \cka(X_0)$ solution to the following equation: 
\[(\om+\ddc \vp)^n = e^{\vp+F}\om^n\]
In particular if $K_X+\D$ is ample, then there exists a (unique) Kähler-Einstein metric of curvature $-1$ equivalent to $\om$.
\end{theo}

\subsection{\textit{A priori} estimates}
\label{sec:estimates}

In this section, we recall the classical estimates valid for a large class of complete Kähler manifolds; they are derived from the classical estimates over compact manifolds using the generalized maximum principle of Yau \cite{Yau2}.  We will use them in an essential manner in the course of the proof of our main theorem. Indeed, our proof is based upon a regularization process, and in order to guarantee the existence of the limiting object, we need to have a control on the $\mathscr C ^k$ norms.

\begin{theo}
\phantomsection
\label{thm:max}
Let $X$ be a complete Riemannian manifold with Ricci curvature bounded from below. Let $f$ be a $\mathscr C^2$ function which is bounded from below on $M$. Then for every $\ep>0$, there exists $x\in X$ such that at $x$, 
\[|\nabla f| < \ep, \quad \Delta f >-\ep, \quad f(x)< \inf_X f +\ep. \]
\end{theo}

\noindent
From this, we easily deduce the following result, stated in \cite[Proposition 4.1]{CY}.
\begin{prop}
\phantomsection
\label{cy1}
Let $(X,\om)$ be a $n$-dimensional complete Kähler manifold, and $F\in \mathscr C^{2}(X)$ a bounded function. We assume that we are given $u\in \mathscr C^{2}(X)$ satisfying $\om+\ddc u>0$ and
\begin{equation}
\label{eqn:cy}
(\omega+\ddc u)^{n} = e^{u+F}\omega^{n} \nonumber
\end{equation}
Suppose that the bisectional curvature of $(X,\om)$ is bounded below by some constant, and that $u$ is a bounded function. Then 
\[\sup_X |u| \le \sup_X |F|.\]
\end{prop}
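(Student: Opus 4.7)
The plan is to bound $\sup u$ and $\inf u$ separately, by two applications of Yau's generalized maximum principle (Theorem~\ref{thm:max}) combined with arithmetic-geometric mean inequalities drawn from the Monge--Ampère equation. Since $u$ is assumed bounded, both $u$ and $-u$ are bounded below, which is the prerequisite for invoking Theorem~\ref{thm:max}. Note that the bisectional curvature being bounded below automatically implies $\Ric \om$ is bounded below, so the hypothesis of the maximum principle is in force.

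For the upper bound, I would apply Theorem~\ref{thm:max} to $-u$ to obtain a sequence $(x_k)$ with $u(x_k) \to \sup u$, $|\nabla u(x_k)| \to 0$ and $\Delta_\om u(x_k) \le \ep_k \to 0$. Let $\lambda_1(x),\dots,\lambda_n(x)$ denote the eigenvalues of $\om+\ddc u$ with respect to $\om$. The equation reads $\prod_i \lambda_i = e^{u+F}$, while $\sum_i \lambda_i = n + \Delta_\om u$, so the AM--GM inequality gives
\[
e^{(u+F)/n} \;\le\; \frac{n+\Delta_\om u}{n} \;=\; 1+\frac{\Delta_\om u}{n}.
\]
Evaluating at $x_k$ and letting $k\to\infty$ yields $u(x_k)+F(x_k) \le n\log(1+\ep_k/n) \to 0$, hence $\sup u \le -\inf F \le \sup|F|$.

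For the lower bound I would apply the maximum principle to $u$ itself, obtaining a sequence $(y_k)$ with $u(y_k) \to \inf u$ and (this is the key point) $\ddc u(y_k) \ge -\ep_k\om$ as Hermitian forms at $y_k$. This is not merely the Laplacian bound $\Delta_\om u(y_k) > -\ep_k$ of Theorem~\ref{thm:max}, but its full complex Hessian counterpart, which holds precisely because the bisectional curvature of $(X,\om)$ is bounded below (Kähler version of Omori--Yau). Consequently $\om+\ddc u \ge (1-\ep_k)\om$ at $y_k$, so $\prod_i \lambda_i(y_k) \ge (1-\ep_k)^n$ and therefore $u(y_k)+F(y_k) \ge n\log(1-\ep_k) \to 0$. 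Passing to the limit gives $\inf u \ge -\sup F \ge -\sup|F|$, which combined with the upper bound yields $\sup_X|u| \le \sup_X|F|$.

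The main obstacle is precisely the lower bound: the trace (Laplacian) form of the generalized maximum principle stated in Theorem~\ref{thm:max} alone is not enough, because an inequality of the form $\Delta_\om u > -\ep$ does not control individual eigenvalues of $\om+\ddc u$ and so does not produce a lower bound on $\det(\om+\ddc u)/\det\om$. One really needs the sharper complex Hessian statement $\ddc u \ge -\ep\om$ at the approximate minimum, and this is exactly what the bisectional-curvature assumption (stronger than Ricci bounded below) is there to supply.
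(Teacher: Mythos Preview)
Your argument is correct and is essentially the standard one; the paper does not give its own proof of this proposition but simply refers to \cite[Proposition 4.1]{CY}, so there is nothing to compare against beyond that. Your identification of the key point is exactly right: the trace inequality $\Delta_{\om}u>-\ep$ from Theorem~\ref{thm:max} suffices for the upper bound via AM--GM, but for the lower bound one needs the full complex Hessian estimate $\ddc u\ge -\ep\,\om$ at an almost-minimum, and this sharper form of the Omori--Yau principle is precisely what the bisectional curvature hypothesis provides. This also explains the paper's subsequent remark that the estimate itself is independent of the value of the bisectional curvature lower bound---that bound is used only qualitatively, to make the Hessian maximum principle available.
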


We emphasize the fact that the previous estimate does not depend on the lower bound for the bisectional curvature of $(X,\om)$.

\noindent
As for the Laplacian estimate, we have the following (we could also have used \cite[Proposition 4.2]{CY}):

\begin{prop}
\phantomsection
\label{cy2}
Suppose that the bisectional curvature of $(X,\om)$ is bounded below by some constant $-B, B>0$, and that $u$ as well as its Laplacian $\D u$ are bounded functions on $X$. If $\om+\ddc u$ defines a complete Kähler metric on $X$ with Ricci curvature bounded from below, then 
\[\sup_X \, (n+\D u) \le C\]
where $C>0$ only depends on $\sup |F|$, $\inf \D F$, $B$ and $n$.
\end{prop}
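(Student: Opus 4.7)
The plan is to adapt Yau's classical $\mathscr C^2$-estimate for the complex Monge-Ampère equation to the complete setting, by replacing the usual maximum principle with Yau's generalized maximum principle (Theorem \ref{thm:max}) applied to the complete Kähler manifold $(X,\om')$ with $\om':=\om+\ddc u$. Since this estimate is paired with Proposition \ref{cy1}, I freely use the Monge-Ampère equation $(\om+\ddc u)^n=e^{u+F}\om^n$ and the $\mathscr C^0$-bound $\sup_X|u|\le\sup_X|F|$ provided there.

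The starting point is the Aubin-Yau inequality: working in normal coordinates for $\om$ simultaneously diagonalizing $\om'$ and using that the bisectional curvature of $\om$ is bounded from below by $-B$, one obtains
\[ \Delta_{\om'}\log(n+\Delta u)\;\ge\;\frac{\Delta(u+F)}{n+\Delta u}\;-\;c_n B\,\mathrm{tr}_{\om'}\om, \]
where $c_n$ depends only on the dimension. Substituting $\Delta u=(n+\Delta u)-n$, introducing the auxiliary function $\psi:=\log(n+\Delta u)-Au$ with $A:=c_nB+1$, and using $\Delta_{\om'}u=n-\mathrm{tr}_{\om'}\om$, I get
\[ \Delta_{\om'}\psi\;\ge\;1-An\;-\;\frac{n-\Delta F}{n+\Delta u}\;+\;\mathrm{tr}_{\om'}\om. \]
The qualitative boundedness of $u$ and $\Delta u$ ensures $\psi$ itself is bounded; the generalized maximum principle applied to $-\psi$ on $(X,\om')$ (complete, with $\mathrm{Ric}\,\om'$ bounded below by assumption) therefore produces a sequence $x_k\in X$ such that $\psi(x_k)\to\sup_X\psi$ and $\Delta_{\om'}\psi(x_k)\to 0$.

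The main task---and the step where one must carefully exploit the Monge-Ampère equation---is to convert this asymptotic information into an explicit upper bound on $n+\Delta u$ depending only on $\sup|F|$, $\inf\Delta F$, $B$ and $n$, independent of the qualitative bound used to apply the maximum principle. Evaluating the previous inequality at $x_k$, assuming without loss of generality $(n+\Delta u)(x_k)\ge 1$ and using $\Delta F\ge\inf\Delta F$, I first obtain $\mathrm{tr}_{\om'}\om(x_k)\le A_1$ for some constant $A_1=A_1(n,B,\inf\Delta F)$. Denoting by $\lambda_1,\ldots,\lambda_n>0$ the eigenvalues of $\om'$ relative to $\om$, the Monge-Ampère equation reads $\prod_i\lambda_i=e^{u+F}$, and the bound $\sum_i 1/\lambda_i\le A_1$ forces each $\lambda_i\ge 1/A_1$, hence
\[ n+\Delta u\;=\;\sum_i\lambda_i\;\le\;n\lambda_{\max}\;\le\;n\,e^{u+F}\,A_1^{n-1} \]
at $x_k$. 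Combined with $|u|,|F|\le\sup|F|$ this gives $(n+\Delta u)(x_k)\le C$ with $C$ depending only on the announced data. Since $\psi(x_k)\to\sup_X\psi$ and $u$ is bounded, this controls $\sup_X\psi$, and finally $\sup_X(n+\Delta u)$, yielding the claimed estimate.
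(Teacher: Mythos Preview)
Your argument is correct and follows essentially the same route as the paper's sketch: derive a differential inequality of Aubin--Yau type for (a function of) $\tr_{\om}\om'$, subtract a large multiple of $u$ so that the right-hand side dominates $\tr_{\om'}\om$, apply Yau's generalized maximum principle on the complete manifold $(X,\om')$, bound $\tr_{\om'}\om$ at the almost-maximum point, and then convert this into a bound on $\tr_{\om}\om'$ via the Monge--Ampère equation and the $\mathscr C^0$-estimate. The only cosmetic difference is that you work with $\log(n+\Delta u)$ while the paper (via \cite[Lemma 2.2]{CGP}) works directly with $\tr_{\om}\om'$; both versions are standard and lead to the same conclusion.
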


\begin{proof}[Sketch of the proof]
We set $\om'=\om+\ddc u$, and $\D'$ is defined to be the Laplacian with respect to $\om'$.

\noindent
Using \cite[Lemma 2.2]{CGP}, we obtain $\D' (\tr_{\om}\om') \ge \frac{\D F}{\tr_{\om'}\om}-B \tr_{\om'}\om$, and from this we may deduce that 
\[\D' (\tr_{\om}\om'-(C_1+1)u)\ge \tr_{\om'}\om-C_2\]
where $C_1,C_2$ are constant depending only $B$, $\inf \D F$ and $n$. The assumptions allow us to use the generalized maximum principle stated as Theorem \ref{thm:max}
to show that $\sup \tr_{\om'}\om \le C_3$. As $\om'=e^{F+u}\om$, and as we have at our disposal uniform estimates on $\sup |u|$ thanks to \ref{cy1}, the usual arguments work here to give a uniform bound $\sup \, (n+\D u) \le C$. We refer e.g. to \cite[section 2]{CGP} for more details. 
\end{proof}

\section{Uniqueness of the Kähler-Einstein metric}
\label{sec:kema}

In this section, we begin to investigate the questions raised in the introduction concerning the existence of Kähler-Einstein metrics for pairs $(X,\D)$. The first thing to do is, as usual, to relate the existence of theses metrics to the existence of solutions for some Monge-Ampère equations. We will be in a singular case, so we have to specify the class of $\om$-psh functions to which we are going to apply the Monge-Ampère operators. This is the aim of the few following lines, where we will recall some recent (but relatively basic) results of pluripotential theory. We refer to \cite{GZ2} or \cite{BEGZ} for a detailed treatment. \\

\subsection{Energy classes for quasi-psh functions}

Let $\om$ be a Kähler metric on $X$; the class $\mathcal E(X,\om)$ is defined to be composed of $\om$-psh functions $\vp$ such that their non-pluripolar Monge-Ampère $(\om + \ddc \vp)^n$  has full mass $\int_X \om^n$ (cf. \cite{GZ2}, \cite{BEGZ}). An alternate way to apprehend those functions is to see them as the largest class where one can define $(\om+\ddc \vp)^n $ as a measure which does not charge pluripolar sets. Those functions satisfy the so-called comparison principle, which we are going to use in an essential manner for the uniqueness of our Kähler-Einstein metric:

\begin{prop}[Comparison Principle, \cite{GZ2}]
\phantomsection
\label{GZ}
Let $\vp,\psi\in \mathcal E(X,\om)$. Then we have:
\[ \int_{\{\vp<\psi\}} (\om + \ddc \psi)^n \le \int_{\{\vp<\psi\}} (\om + \ddc \vp)^n.\]
\end{prop}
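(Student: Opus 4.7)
The plan is to establish the comparison principle first for bounded $\om$-psh functions via the Bedford--Taylor max trick, and then pass to the class $\mathcal{E}(X,\om)$ by a truncation argument.

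For the bounded case, I would fix $\ep>0$ and consider the envelope $u_\ep:=\max(\vp,\psi-\ep)$, which is again bounded and $\om$-psh. Plurifine locality of the Monge--Ampère operator gives $(\om+\ddc u_\ep)^n=(\om+\ddc\vp)^n$ on the plurifine open set $\{\vp>\psi-\ep\}$ and $(\om+\ddc u_\ep)^n=(\om+\ddc\psi)^n$ on $\{\vp<\psi-\ep\}$. Combined with the conservation of total mass for bounded $\om$-psh functions, $\int_X(\om+\ddc u_\ep)^n=\int_X(\om+\ddc\vp)^n=\int_X\om^n$, partitioning $X$ along $\{\vp<\psi-\ep\}\cup\{\vp=\psi-\ep\}\cup\{\vp>\psi-\ep\}$ and cancelling the common contribution from the last piece yields
\[\int_{\{\vp<\psi-\ep\}}(\om+\ddc\psi)^n \le \int_{\{\vp<\psi-\ep\}}(\om+\ddc\vp)^n+\int_{\{\vp=\psi-\ep\}}(\om+\ddc\vp)^n.\]
The level sets $\{\vp-\psi=-\ep\}$ being pairwise disjoint for distinct $\ep$, the finite measure $(\om+\ddc\vp)^n$ charges only countably many of them; picking $\ep_k\downarrow 0$ outside this exceptional set and applying monotone convergence on the nested sets $\{\vp<\psi-\ep_k\}\uparrow\{\vp<\psi\}$ gives the inequality in the bounded case.

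For general $\vp,\psi\in\mathcal{E}(X,\om)$, I would truncate by $\vp_j:=\max(\vp,-j)$ and $\psi_j:=\max(\psi,-j)$, which are bounded and decrease to $\vp$ and $\psi$. Applying the bounded case to $(\vp_j,\psi_j)$ and letting $j\to\infty$, the key tools are plurifine locality, which gives $(\om+\ddc\vp_j)^n=(\om+\ddc\vp)^n$ on $\{\vp>-j\}$ and similarly for $\psi$, together with the defining property of $\mathcal{E}(X,\om)$ that $\int_X(\om+\ddc\vp_j)^n\to\int_X\om^n$. The main obstacle lies precisely in this passage to the limit: one must ensure that no Monge--Ampère mass escapes through the pluripolar set $\{\vp=-\infty\}\cup\{\psi=-\infty\}$, which is exactly the raison d'être for restricting to the class $\mathcal{E}(X,\om)$ of full-mass potentials.
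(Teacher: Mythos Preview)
The paper does not actually prove this proposition: it is stated as a cited result from \cite{GZ2}, with no argument given. Your outline is essentially the standard proof from that reference---the Bedford--Taylor max trick combined with plurifine locality and mass conservation in the bounded case, followed by truncation to reach the class $\mathcal E(X,\om)$---so there is nothing to compare against here.

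One minor remark on your passage to the limit: when you apply the bounded inequality to $(\vp_j,\psi_j)$ you should be explicit about the relation between the sublevel sets, e.g.\ work on $\{\vp<\psi\}\cap\{\vp>-j\}$ (where $\vp_j=\vp$ and $\psi_j\ge\psi$, hence $\vp_j<\psi_j$) and use that $(\om+\ddc\vp_j)^n$ agrees with $(\om+\ddc\vp)^n$ there by plurifine locality, while the full-mass condition ensures the leftover mass on $\{\vp\le -j\}$ tends to zero. As written, your sketch gestures at this correctly but leaves the set-inclusion bookkeeping implicit.
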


An important subset of $\mathcal E(X,\om)$ is the class $\mathcal E^1(X,\om)$ of functions in the class $\mathcal E(X,\om)$ having finite $\mathcal E^1$-energy, namely $\mathcal E^1(\vp):=\int_X |\vp| (\om + \ddc \vp)^n <+\infty$. Every smooth (or even bounded) $\om$-psh function belongs to this class.

\noindent
In order to state an useful result for us, we recall the notion of \emph{capacity} attached to a compact Kähler manifold $(X,\om)$, as introduced in \cite{GZ}, generalizing the usual capacity of Bedford-Taylor (\cite{BT82}): for every Borel subset $K$ of $X$, we set: 
\[\mathrm{Cap}_{\om}(K):=\sup\left\{ \int_K \om_{\vp}^n; \, \, \vp \in \mathrm{PSH}(X,\om), \, 0 \le \vp \le 1\right\} \]
\noindent
There is an useful criteria to show that some $\om$-psh function belongs to the class $\mathcal E^1(X,\om)$ without checking that it has full Monge-Ampère mass, but only using the capacity decay of the sublevel sets. It appears in different papers, among which \cite[Lemma 5.1]{GZ2}, \cite[Proposition 2.2]{BGZ}, \cite[Lemma 2.9]{BBGZ}:
\begin{lemm}
\phantomsection
\label{lem:cap}
Let $\vp \in \mathrm{PSH}(X,\om)$. If 
\[\int_{t=0}^{+\infty}  t^n\,  \mathrm{Cap}_{\om} \{\vp < -t\}\,dt < +\infty\]
then $\vp \in \mathcal E^1(X,\om)$.
\end{lemm}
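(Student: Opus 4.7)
The plan is to control the $\mathcal{E}^1$-energies of the canonical bounded approximants $\vp_k:=\max(\vp,-k)$ by a layer-cake decomposition coupled with a capacity estimate on the sublevel sets $\{\vp<-t\}$. By the standard characterization of finite energy classes (see \cite{GZ2} or \cite{BEGZ}), it is enough to check that
\[
\sup_{k>0}\int_X(-\vp_k)\,\om_{\vp_k}^n<+\infty,
\]
where $\om_{\vp_k}:=\om+\ddc\vp_k$ and $k$ is allowed to range over positive reals. Up to subtracting a constant we may assume $\vp\le 0$, so that $-k\le\vp_k\le 0$ and Fubini yields
\[
\int_X(-\vp_k)\,\om_{\vp_k}^n=\int_0^k \om_{\vp_k}^n(\{\vp<-t\})\,dt.
\]

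A first observation is that $a_k(t):=\om_{\vp_k}^n(\{\vp<-t\})$ is actually \emph{independent of $k$ as soon as $k>t$}. Indeed $\vp_k=\vp$ on $\{\vp>-k\}\supset\{\vp\ge-t\}$, so $\om_{\vp_k}^n$ and the non-pluripolar Monge-Ampère $\om_\vp^n$ coincide on $\{\vp\ge-t\}$; combined with $\int_X\om_{\vp_k}^n=\int_X\om^n$ (as $\vp_k$ is bounded) this gives
\[
a_k(t)=\int_X\om^n-\om_\vp^n(\{\vp\ge-t\})=:a(t),\qquad k>t.
\]
This $k$-freedom is the key ingredient that will make the capacity estimate sharp.

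For the capacity estimate, fix $t\ge 1$ and pick any reals $T>k>t$. The function
\[
u:=\max\!\left(\frac{\vp_k}{T}+1,\,0\right)
\]
is $\om$-psh (since $T\ge 1$), and the bounds $-T<-k\le\vp_k\le 0$ force $u=\vp_k/T+1$ everywhere, with values in $[0,1]$. Therefore $\om_u=(1-1/T)\om+(1/T)\om_{\vp_k}$, so that expanding $\om_u^n$ gives $\om_{\vp_k}^n\le T^n\om_u^n$ as positive measures. Using that $\{\vp<-t\}=\{\vp_k<-t\}$ for $k>t$ and that $u$ is admissible in the definition of $\mathrm{Cap}_\om$, we obtain
\[
a(t)=\om_{\vp_k}^n(\{\vp_k<-t\})\le T^n\,\om_u^n(\{\vp_k<-t\})\le T^n\,\mathrm{Cap}_\om(\{\vp<-t\}).
\]
Since $a(t)$ depends neither on $T$ nor on $k$, letting $T\to t^+$ yields $a(t)\le t^n\,\mathrm{Cap}_\om(\{\vp<-t\})$ for every $t\ge 1$; on $[0,1]$ the trivial bound $a(t)\le\int_X\om^n$ is enough.

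Integrating these two estimates,
\[
\sup_k\int_X(-\vp_k)\om_{\vp_k}^n=\int_0^{+\infty}a(t)\,dt\le \int_X\om^n+\int_1^{+\infty}t^n\,\mathrm{Cap}_\om(\{\vp<-t\})\,dt<+\infty
\]
by hypothesis, and hence $\vp\in\mathcal{E}^1(X,\om)$. The main obstacle is the capacity estimate with the sharp exponent $t^n$: a naive application of the above construction of $u$ to $\vp$ itself would produce an uncontrolled constant in front of the capacity, and it is precisely the $k$-independence of $a_k(t)$ for $k>t$ that allows us to take $T$ (and $k$) arbitrarily close to $t$.
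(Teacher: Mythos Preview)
The paper does not actually prove this lemma: it is merely quoted from the literature, with references to \cite[Lemma 5.1]{GZ2}, \cite[Proposition 2.2]{BGZ}, and \cite[Lemma 2.9]{BBGZ}. Your argument is correct and is essentially the standard one found in those references: truncate $\vp$ to $\vp_k=\max(\vp,-k)$, express $\int_X(-\vp_k)\om_{\vp_k}^n$ by a layer--cake formula, and bound each level $\om_{\vp_k}^n(\{\vp<-t\})$ by $t^n\,\mathrm{Cap}_\om(\{\vp<-t\})$ using the competitor $\vp_k/T+1$.

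Two minor remarks. First, the detour through the non-pluripolar measure $\om_\vp^n$ to establish that $a_k(t)$ is independent of $k>t$ is unnecessary: Bedford--Taylor locality for the \emph{bounded} functions $\vp_k$ and $\vp_{k'}$ (which agree on the plurifine open set $\{\vp>-\min(k,k')\}\supset\{\vp\ge -t\}$) already gives $\om_{\vp_k}^n(\{\vp\ge -t\})=\om_{\vp_{k'}}^n(\{\vp\ge -t\})$, and hence $a_k(t)=a_{k'}(t)$ by complementing and using that both measures have total mass $\int_X\om^n$. Second, your closing comment is right on target: if one naively uses $\vp_k/k+1$ as the competitor (with $k$ fixed and $t$ varying in $[0,k]$), one only gets $a_k(t)\le k^n\,\mathrm{Cap}_\om(\{\vp<-t\})$, which is useless for a uniform bound; the independence of $a_k(t)$ from $k>t$ is exactly what lets you take the competitor at scale $T\approx t$ and recover the sharp factor $t^n$.
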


Now we have enough background about these objects to state and prove the result we will use in the next section. Let us first fix the notations.

\noindent
Let $(X,\om_0)$ be a Kähler manifold, and $\D=\sum_{k\in K} \D_k$ a simple normal crossing divisor. We choose sections $s_k$ of $\Ox(\D_k)$ whose divisor is precisely $\D_k$, and we fix some smooth hermitian metrics on those line bundles. We can assume that $|s_k| \le e^{-1}$, and we know that, up to scaling the metrics, one may assume that $\om_0- \sum_k \ddc \log \log \frac{1}{|s_k|^2}$ is positive on $X_0$, and defines a Kähler current on $X$.
\begin{prop}
\phantomsection
\label{prop:loglog}
The function
\[\vp_0 = - \sum_{k\in K} \log \log \frac{1}{|s_k|^2}\]
belongs to the class $\mathcal E^1(X,\om_0)$.
\end{prop}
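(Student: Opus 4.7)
The plan is to apply Lemma~\ref{lem:cap}, which reduces the claim to establishing the finiteness of $\int_0^{+\infty} t^n\,\mathrm{Cap}_{\om_0}\{\vp_0 < -t\}\,dt$. First I note that each summand $\chi_k := -\log\log|s_k|^{-2}$ is non-positive on $X_0$ (since $|s_k|\le e^{-1}$ forces $\log|s_k|^{-2}\ge 2$), so $\vp_0\le 0$; together with the positivity assumption recalled just before the proposition, this says $\vp_0\in\mathrm{PSH}(X,\om_0)$.

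The key step is a pigeonhole reduction: if $\vp_0(x)<-t$, then one of the $N:=|K|$ summands must satisfy $\log\log|s_k(x)|^{-2}>t/N$, i.e.\ $\log|s_k(x)|^2 < -e^{t/N}$. Hence
\[
\{\vp_0<-t\}\;\subset\; \bigcup_{k\in K}\bigl\{\log|s_k|^2 < -e^{t/N}\bigr\},
\]
and by subadditivity of the Monge--Ampère capacity it suffices to estimate each $\mathrm{Cap}_{\om_0}\{\log|s_k|^2<-S\}$ with $S=e^{t/N}$.

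For this last quantity I would invoke the standard capacity decay: writing $\log|s_k|^2 = A_k\rho_k + \mathrm{const}$ for some $\rho_k\in\mathrm{PSH}(X,\om_0)$ (possible since $\ddc\log|s_k|^2 = -\Theta(\D_k)$ on $X_0$ is bounded below by $-A_k\om_0$), one has
\[
\mathrm{Cap}_{\om_0}\{\log|s_k|^2<-S\}\le \frac{C_k}{S}, \qquad S\ge 1.
\]
This is a Chebyshev inequality combined with the uniform bound $\int_X(-\rho_k)(\om_0+\ddc u)^n\le C_k$ for $u\in\mathrm{PSH}(X,\om_0)$ with $0\le u\le 1$, established by iterated integration by parts against $\om_0$ (a classical fact of pluripotential theory, present e.g.\ in \cite{GZ2}).

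Combining, $\mathrm{Cap}_{\om_0}\{\vp_0<-t\}\le C\,e^{-t/N}$, so the integrand $t^n\cdot Ce^{-t/N}$ is integrable on $[0,+\infty)$, and Lemma~\ref{lem:cap} delivers $\vp_0\in\mathcal E^1(X,\om_0)$. The only non-routine point is the $C/S$ capacity estimate for $\log|s_k|^2$; everything else is bookkeeping, and the argument ultimately exploits just the fact that $\vp_0$ has doubly-logarithmic rather than logarithmic singularities along $\D$, which forces its sublevel sets to be exponentially thin.
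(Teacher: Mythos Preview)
Your argument is correct and follows the same overall strategy as the paper: reduce to Lemma~\ref{lem:cap}, use the pigeonhole inclusion $\{\vp_0<-t\}\subset\bigcup_k\{\log|s_k|^2<-e^{t/N}\}$, and then invoke the $C/S$ capacity decay of sublevel sets of $\log|s_k|^2$ to get exponential smallness.

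The one genuine difference is in how the $C/S$ capacity bound is obtained. The paper localizes: it invokes Ko\l odziej's comparison between $\mathrm{Cap}_{\om_0}$ and local Bedford--Taylor capacities on trivializing charts, and then uses the explicit computation $\mathrm{Cap}_{BT}\{\log|z|^2<-t\}=2/t$ in the polydisc. You instead stay global, writing $\log|s_k|^2$ as a multiple of an $\om_0$-psh function and appealing to the Chebyshev-type bound $\int_X(-\rho_k)(\om_0+\ddc u)^n\le C$ uniform in $u$. Your route is arguably more self-contained (it avoids the localization black box), while the paper's is more explicit about the constants; both are standard and equally valid here.
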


\begin{proof}
We want to apply Lemma \ref{lem:cap}. To compute the global capacity as defined above, or at least know the capacity decay of the sublevel sets, it is convenient to use the Bedford-Taylor capacity.
But a result due to Ko\l odziej \cite{Kol} (see also \cite[Proposition 2.10]{GZ}), states that up to universal multiplicative constants, the capacity can be computed by the local Bedford-Taylor capacities on the trivializing charts of $X$.\\

Therefore, we are led to bound from above $\mathrm{Cap}_{BT}\{u<-t\}$ in the unit polydisc of $\mathbb C^n$, where $u=\sum_{i=1}^p -\log(-\log |z_i|^2)$ for some  $p\le n$. 
As \[\{u<-t\} \subset \bigcup_{i=1}^p \left\{-\log(-\log|z_i|^2)<-\frac t p\right\}\] one can now assume that $p=1$. But $\mathrm{Cap}_{BT}\{\log|z|^2<-t)=2/t$ (see e.g \cite[Example 13.10]{DemP}), whence $\mathrm{Cap}_{BT}\{-\log(-\log|z_i|^2)<-t)=2e^{-t}$. The result follows.
\end{proof}

\begin{rema}
An alternate way to proceed is to show that the smooth approximations $\vpe:=-\sum_{k\in K} \log \log \frac{1}{|s_k|^2+\ep^2}$ of $\vp_0$ have (uniformly) bounded $\mathcal E^1$-energy, which also allows to conclude that $\vp_0 \in \mathcal E^1(X,\om_0)$ thanks to \cite[Proposition 2.10 \& 2.11]{BEGZ}.
\end{rema}

\subsection{From Kähler-Einstein metrics to Monge-Ampère equations}

The following proposition explains how to relate Kähler-Einstein metrics for a pair $(X,\D)$ and some Monge-Ampère equations, the difficulty being here that we have to deal with singular weights/potentials for which the definitions and properties of the Monge-Ampère operators are more complicated than in the smooth case. Note that this result generalizes \cite[Proposition 5.1]{rber}:

\begin{prop}
\phantomsection
\label{prop:ke}
Let $X$ be a compact Kähler manifold, and $\D=\sum a_j \D_j$ an effective $\R$-divisor with simple normal crossing support, such that $a_j\le 1$ for all $j$. We assume that $K_X+\D$ is ample, and we choose a Kähler metric $\om_0\in c_1(K_X+\D)$. Then any Kähler metric $\om$ on $X_0$ satisfying: 
\begin{enumerate}
\item[$\bullet$] $-\Ric \om = \om$ on $\xmd$;
\item[$\bullet$] There exists $C>0$ such that: \[\quad C^{-1} \om^n \le \frac{\om_0^n}{\prod_{\{a_i<1\}}|s_i|^{2a_i}\prod_{\{a_i=1\} }|s_i|^2 \log^2 |s_i|^2} \le  C \om^n\]
\end{enumerate}
extends to a Kähler current $\om=\om_0+\ddc \vp$ on $X$ where $\vp \in \mathcal E^1(X,\om_0)$ is a solution of 
\[(\om_0+\ddc \vp)^n = e^{\vp-\vp_{\D}}\om_0^n\]
and $\vp_{\D}=\sum _{r \in J\cup K} a_{r} \log |s_{r}|^2+f$ for some $f\in \mathscr C^{\infty}(X)$. Furthermore there exists at most one such metric $\om$ on $\xmd$.
\end{prop}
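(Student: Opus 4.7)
\textbf{Setting up the potential.} Since $K_X+\D$ is ample and $\om_0\in c_1(K_X+\D)$ is smooth, I can choose a smooth volume form $\Omega_0$ on $X$ together with a smooth function $f$ so that
\[
\om_0+\mathrm{Ric}\,\Omega_0=\sum_i a_i\,\Theta(\D_i)+\ddc f
\]
(both sides represent $c_1(\D)$). Set $\vp_\D:=\sum_i a_i\log|s_i|^2+f$; by the Lelong–Poincaré formula the singular volume form $\Omega_\D:=e^{-\vp_\D}\om_0^n$ satisfies $-\mathrm{Ric}\,\Omega_\D=\om_0$ on $X_0$. On $X_0$ the Kähler–Einstein equation $-\mathrm{Ric}\,\om=\om$ and this identity give
\[
\om-\om_0=\mathrm{Ric}\,\Omega_\D-\mathrm{Ric}\,\om=\ddc \log\!\big(\om^n/\Omega_\D\big),
\]
so $\vp:=\log(\om^n/\Omega_\D)$ is a smooth $\om_0$-psh function on $X_0$ with $\om=\om_0+\ddc\vp$ and $\om^n=e^{\vp-\vp_\D}\om_0^n$ there.

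\textbf{Extension and finite energy.} The volume comparison in the hypothesis gives
\[
\vp=-\sum_{\{a_i=1\}}\log\!\log^{2}\!\tfrac{1}{|s_i|^2}+O(1)\quad\text{on }X_0,
\]
so $\vp$ is bounded above near $\mathrm{Supp}(\D)$. Since $\om_0+\ddc\vp\ge 0$ on $X_0$ and the pluripolar set $\mathrm{Supp}(\D)$ is removable for upper-semicontinuous $\om_0$-psh functions bounded above, $\vp$ extends uniquely to an $\om_0$-psh function on $X$. Because $\vp$ differs by a bounded amount from the function $-\sum_{\{a_i=1\}}\log\log\frac{1}{|s_i|^2}$ treated in Proposition \ref{prop:loglog}, the sublevel sets $\{\vp<-t\}$ and $\{\vp_0<-t\}$ differ only by a shift in $t$. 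Hence the capacity estimate established there, combined with Lemma \ref{lem:cap}, yields $\vp\in\mathcal E^1(X,\om_0)$.

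\textbf{Monge–Ampère equation on the whole of $X$.} On $X_0$ one has $\langle(\om_0+\ddc\vp)^n\rangle=\om^n=e^{\vp-\vp_\D}\om_0^n$. The right-hand side is a globally $L^1$ measure on $X$: on the klt components the singularity $|s_i|^{-2a_i}$ with $a_i<1$ is integrable, while on the lc components the factor $|s_i|^{-2}$ is tamed by $e^{\vp}\sim(\log|s_i|^{-2})^{-2}$, giving the integrable density $|s_i|^{-2}(\log|s_i|^{-2})^{-2}$. Neither the non-pluripolar Monge–Ampère of $\vp\in\mathcal E(X,\om_0)$ nor this $L^1$ density charges the pluripolar set $\mathrm{Supp}(\D)$, so the identity extends from $X_0$ to $X$.

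\textbf{Uniqueness.} Suppose $\vp_1,\vp_2\in\mathcal E^1(X,\om_0)$ both solve $(\om_0+\ddc\vp)^n=e^{\vp-\vp_\D}\om_0^n$. The comparison principle (Proposition \ref{GZ}) gives
\[
\int_{\{\vp_1<\vp_2\}}e^{\vp_2-\vp_\D}\om_0^n=\int_{\{\vp_1<\vp_2\}}(\om_0+\ddc\vp_2)^n\le\int_{\{\vp_1<\vp_2\}}(\om_0+\ddc\vp_1)^n=\int_{\{\vp_1<\vp_2\}}e^{\vp_1-\vp_\D}\om_0^n.
\]
On $\{\vp_1<\vp_2\}$ the integrand on the left strictly dominates the one on the right (the weight $e^{-\vp_\D}\om_0^n$ being a positive measure), forcing this set to have Lebesgue measure zero; by symmetry $\vp_1=\vp_2$ almost everywhere, hence everywhere by upper-semicontinuity. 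This determines $\om$ on $X_0$ uniquely, completing the proof. The only delicate point is the membership $\vp\in\mathcal E^1(X,\om_0)$; it is exactly the content of Proposition \ref{prop:loglog} once the boundary behaviour of $\vp$ has been read off from the volume hypothesis.
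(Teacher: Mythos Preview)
Your proof is correct and follows essentially the same strategy as the paper's: construct the potential $\vp$ from the ratio of volume forms, read off its boundary behaviour $\vp=2\vp_0+O(1)$ from the volume hypothesis, extend it across $\Supp(\D)$ as an $\om_0$-psh function bounded above, invoke Proposition~\ref{prop:loglog} (via Lemma~\ref{lem:cap}) for the membership $\vp\in\mathcal E^1(X,\om_0)$, and deduce uniqueness from the comparison principle. The only cosmetic slips are the unused volume form $\Omega_0$ (you implicitly take $\Omega_0=\om_0^n$) and a sign in the defining equation for $f$; neither affects the argument.
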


\begin{rema}
One can observe that although $e^{\vp-\vp_{\D}} \om_0^n$ has finite mass, $e^{-\vp_{\D}} \om_0^n$ does not (as soon as $\Dlc \neq 0)$.
\end{rema}

\begin{proof}
We recall that $\Theta(\D_i)$ denotes the curvature of $(\Ox(\D_i),h_i)$, and we write $\Theta(\Dklt)=\sum_{\{a_i<1\}} a_i \Theta(\D_i)$, $\Theta(\Dlc)=\sum_{\{a_i=1\}}  \Theta(\D_i)$ and $\Theta(\D)=\Theta(\Dklt)+\Theta(\Dlc)$. All those forms are smooth on $X$.

\noindent
Let us define a smooth function $\psi$ on $\xmd$ by:
 \[\psi_0:=\log \left( \frac{\prod_{j\in J} |s_j|^{2a_j} \prod_{k\in K} |s_k|^2 \log^2 |s_k|^2 \, \om^n}{\om_0^n} \right)\]
By assumption, $\psi_0$ is bounded on $\xmd$, so that $\psi:=\psi_0- \sum_k \log \log^2 \frac{1}{|s_k|^2}$ is bounded above on $\xmd$. On this set, we have
\[\ddc \psi = \om+\Ric \om_0^n  + \Theta(\D)\]
so that $\psi$ is $M\om_0$-psh for some $M>0$ big enough. As it is bounded above, it extends to a (unique) $M\om_0$-psh function on the whole $X$, which we will also denote by $\psi$. 
Let now $f$ be a smooth potential on $X$ of $\Ric \om_0^n +\om_0-\Theta(\D)$. It is easily shown that $\vp:=\psi-f$ satisfies $\om_0+\ddc \vp = \om$ on $\xmd$. \\

\noindent
From the definition of $\vp$, we see that $\vp = 2 \vp_0 + \mathcal O(1)$, where $\vp_0= - \sum_{k\in K} \log \log \frac{1}{|s_k|^2}$. Therefore, Proposition \ref{prop:loglog} ensures that $\vp \in \mathcal E^1(X,\om_0)$, so that its non-pluripolar Monge-Ampère $(\om_0+\ddc \vp)^n$ satisfies the equation
\begin{eqnarray*}
(\om_0+\ddc \vp)^n&=&\frac{e^{\vp-f}\om_0^n}{\prod_{r\in J\cup K} |s_{r}|^{2a_r}}\\
&=&e^{\vp-\vp_{\Delta}}\om_0^n
\end{eqnarray*}
on the whole $X$, with the notations of the statement. By the comparison principle (Proposition \ref{GZ}), if the previous equation had two solutions $\vp,\psi\in \mathcal E^1(X,\om_0)$, then on the set $A=\{\vp < \psi\}$, we would have 
\[\int_A e^{\psi-\vp_{\Delta}}\om_0^n \le \int_A e^{\vp-\vp_{\Delta}}\om_0^n \]
but on $A$, $e^{\psi}>e^{\vp}$ so that $A$ has zero measure with repect to the measure $e^{-\vp_{\Delta}}\om_0^n$, so it has zero measure with respect to $\om_0^n$. We can do the same for $B=\{\psi < \vp\}$, so that $\{\vp=\psi\}$ has full measure with respect to $\om_0^n$. As $\vp, \psi$ are $\om_0$-psh, they are determined by their data almost everywhere, so they are equal on $X$. This finishes to conclude that our $\vp$ is unique, so that the proposition is proved.
\end{proof}

\begin{rema}
In the logarithmic case ($\D=\Dlc$), the metrics at stake are complete, so that their uniqueness follow from the generalized maximum principle of Yau (cf. \cite{KobR}, \cite{Tia} e.g). In the conic case, Ko\l odziej's theorem \cite{Kolo} ensures that the potentials we are dealing with are continuous, and the unicity follows from the classical comparison principle established in \cite[Theorem 4.1]{BT82}.
\end{rema}

As Kähler metrics with mixed Poincaré and cone singularities clearly satisfy the second condition of the proposition, we deduce that any negatively curved normalized Kähler-Einstein metric must be obtained by solving the global equation $(\om_0+\ddc \vp)^n = e^{\vp-\vp_{\D}}\om_0^n$ on $X$, for $\vp \in \mathcal{E}^1(X,\om_0)$, and $\vp_{\D}=\sum _{r \in J\cup K} a_{r} \log |s_{r}|^2+f$ for some $f\in \mathscr C^{\infty}(X)$. 
We will now show how to solve the previous equation, and derive from this the existence of negatively curved Kähler-Einstein metrics and their zero-th order asymptotic along $\D$.

\section{Statement of the main result}

Here is a result which encompasses the previous results of \cite{CGP}, Kobayashi (\cite{KobR}) and Tian-Yau (\cite{Tia}). This provides a (positive) partial answer to a question raised in \cite[section 10]{CGP}. 

\begin{theo}
\phantomsection
\label{main}
Let $X$ be a compact Kähler manifold, and $\D=\sum a_i \D_i$ an effective $\R$-divisor with simple normal crossing support such that its coefficients satisfy the inequalities:
\[ 1/2 \le a_i \le 1. \]
Then for any Kähler form $\omega$ on $\Xlc$ of Carlson-Griffiths type and any function $f\in \cka(\Xlc)$ with $k\ge 3$, there exists a Kähler metric $\omi=\om+\ddc \vp$ on $X_0$ solution to the following equation:
\[ (\om+\ddc \vp)^{n} =\frac{e^{\vp+f}}{\prod_{\{a_i<1\}}|s_i|^{2a_i}}\,\om^{n}\]
such that $\omi$ has mixed Poincaré and cone singularities along $\D$.
\end{theo}

We refer to section \ref{sec:log} and more precisely to Definition \ref{def:cka} for the definition of the space $\cka(\Xlc)$; one important class of functions belonging to $\cka(\Xlc)$ is pointed out in Lemma \ref{lem:cka}, and we will use it for proving the following result.

\begin{coro}
Let $(X,\D)$ be a pair such that $\D=\sum a_i \D_i$ is a divisor with simple normal crossing support whose coefficients satisfy the inequalities
\[ 1/2 \le a_i \le 1. \]
If $K_X+\D$ is ample, then $X_0$ carries a unique Kähler-Einstein metric $\omke$ of curvature $-1$ having mixed Poincaré and cone singularities along $\D$. 
\end{coro}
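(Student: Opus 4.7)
The plan is to reduce the corollary to Theorem \ref{main} combined with the uniqueness clause of Proposition \ref{prop:ke}. Since $K_X+\D$ is ample, I first fix a Kähler form $\om_0 \in c_1(K_X+\D)$. Rescaling the hermitian metrics on the $\Ox(\D_i)$ in the manner of Lemma \ref{lem:cg}, the $(1,1)$-form
\[ \om := \om_0 - \sum_{\{a_i=1\}} \ddc \log \log \frac{1}{|s_i|^2} \]
defines a Kähler metric of Carlson-Griffiths type on $\Xlc$ with Poincaré singularities along $\Dlc$, which is exactly the kind of input required by Theorem \ref{main}.

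Next I choose $f$ so that the Monge-Ampère equation of Theorem \ref{main} is precisely the normalized Kähler-Einstein equation $\Ric \omke = -\omke$. The cohomological identity $[\om_0]=c_1(K_X+\D)$ together with $[\Ric \om_0]=-c_1(K_X)$ produces a smooth function $F_0$ on $X$ with
\[ \Ric \om_0 + \om_0 - \sum_i a_i \Theta(\D_i) = \ddc F_0. \]
I set
\[ f := F_0 - \log\!\Bigl( \textstyle\prod_{\{a_i=1\}} |s_i|^2 \log^2|s_i|^2 \cdot \om^n/\om_0^n \Bigr), \]
which by Lemma \ref{lem:cka} belongs to $\cka(\Xlc)$ for every $k$ and $\alpha$. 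Theorem \ref{main}, applied to this $\om$ and $f$ (for instance with $k=3$), then yields a Kähler metric $\omke := \om + \ddc \vp$ on $X_0$ having mixed Poincaré and cone singularities along $\D$ and satisfying
\[ \omke^n = \frac{e^{\vp+f}}{\prod_{\{a_i<1\}} |s_i|^{2a_i}}\,\om^n. \]
Taking $-\ddc\log$ of both sides on the open set $X_0$ (where all logarithms are smooth) and using $\ddc \log |s_i|^2 = -\Theta(\D_i)$ pointwise on $X_0$ together with the defining identity of $F_0$ and the tautological relation $\Ric \om = -\ddc \log \om^n$ on $\Xlc$, the contributions cancel and one is left with $\Ric \omke = -\omke$ on $X_0$. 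This settles existence.

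For uniqueness, let $\omke'$ be any Kähler-Einstein metric of curvature $-1$ on $X_0$ with mixed Poincaré and cone singularities along $\D$. The prescribed asymptotic behaviour implies exactly the two hypotheses of Proposition \ref{prop:ke}, so $\omke' = \om_0 + \ddc \vp'$ for some $\vp'\in\mathcal E^1(X,\om_0)$ solving the global Monge-Ampère equation $(\om_0+\ddc\vp')^n = e^{\vp'-\vp_{\D}}\om_0^n$. The comparison-principle argument already carried out inside Proposition \ref{prop:ke} shows that the $\mathcal E^1$-solution of this equation is unique, so $\omke'$ and $\omke$ agree on $X_0$.

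The substance of the problem lies in Theorem \ref{main}, whose proof is the technical core of the paper; once it is granted, the only nontrivial verification in the corollary is that the candidate $f$ constructed above lies in the Hölder space $\cka(\Xlc)$, and this is precisely the statement of Lemma \ref{lem:cka}. The unwinding of the Monge-Ampère equation into the Einstein equation is a routine Ricci-form computation and does not present any obstacle.
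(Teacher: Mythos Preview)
Your proof follows the paper's argument essentially verbatim: build a Carlson--Griffiths form $\om$ on $\Xlc$ from a Kähler $\om_0\in c_1(K_X+\D)$, choose an $f\in\cka(\Xlc)$ so that the Monge--Ampère equation of Theorem~\ref{main} becomes the Kähler--Einstein equation, appeal to Lemma~\ref{lem:cka} for the Hölder regularity of $f$, and invoke Proposition~\ref{prop:ke} for uniqueness. The only cosmetic difference is how the smooth part of $f$ is produced: you obtain it via a $\ddc$-potential $F_0$ for the exact form $\Ric\om_0+\om_0-\Theta(\D)$, whereas the paper writes it using the volume form $\Psi$ attached to the chosen metric on $K_X$ (so that $\ddc\log\Psi=\Theta(K_X)$); these two packagings differ by a smooth function on $X$ and lead to the same conclusion.
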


Here, by ample, we mean that $c_1(K_X+\D)$ contains a Kähler metric, or equivalently that $K_X+\D$ is a positive combination of ample $\Q$-divisors.

\begin{proof}
We choose $(h_i)$ and $h_{K_X}$ some smooth hermitian metrics on the line bundles $\Ox(\D_i)$ and $\Ox(K_X)$ respectively such that the product metric $h$ on $K_X+\D$ has positive curvature $\om_0$, and up to renormalizing the metrics $h_k$, one can assume that $\om:=\om_0-\sum_{\{a_k=1\}}\ddc \log \log \frac{1}{|s_k|^2}$ defines a Kähler metric on $\Xlc$ with Poincaré singularities along $\Dlc$; more precisely it is of Carlson-Griffiths type. \\
Lemma \ref{lem:cka} shows that one can write
\[\om^n = \frac{e^{-B}\Psi}{\prod |s_k|^2 \log^2 |s_k|^2} \]
with $\Psi$ the smooth volume form on $X$ attached to $h_{K_X}$ (in particular $-\Ric \Psi = \Theta_{h_{K_X}}(K_X)$, the curvature of $(\Ox(K_X), h_{K_X})$), and $B \in \cka(X\setminus \Dlc)$ for all $k$ and $\alpha$. \\
Now we use Theorem \ref{main} with $f=B$, and $\om$ as reference metric. We then get a Kähler metric $\omke := \om+\ddc \vp$ on $X\setminus \Supp(\D)$ with mixed Poincaré and cone singularities along $\D$ satisfying
\[ (\om+\ddc \vp)^{n} =\frac{e^{\vp+B}}{\prod_{j\in J}|s_j|^{2a_j}}\,\om^{n}. \]
Therefore, 
\begin{eqnarray*}
- \Ric (\omke) &=& \ddc(\vp+B)-\ddc B + \Theta_{h_{K_X}}(K_X) - \sum_{k\in K} \left(\ddc \log |s_k|^2 - \ddc \log \log \frac{1}{|s_k|^2}\right)\\
&& - \sum_{j\in J} \ddc \log |s_j|^{2a_j} \\
&=&\ddc \vp + \Theta(K_X)+\Theta(\Dlc)+\Theta(\Dklt) - \sum_{k\in K}  \ddc \log \log \frac{1}{|s_k|^2}\\
&=& \omke.
\end{eqnarray*}
Moreover, $\omke$ has mixed Poincaré and cone singularities along $\D$, so it is a Kähler-Einstein metric for the pair $(X,\D)$.\\
As for the uniqueness of $\omke$, it follows directly from Proposition \ref{prop:ke}.
\end{proof}

\section{Proof of the main result}

As we explained in the introduction, the natural strategy is to combine the approaches of \cite{CGP} and Kobayashi (\cite{KobR}). More precisely we will produce a sequence of Kähler metrics $(\ome)_{\ep}$ on $X\setminus \D_{lc}$ having Poincaré singularities along $\D_{lc}$ and acquiring cone singularities along $\D_{klt}$ at the end of the process when  $\ep =0$. \\

\subsection{The approximation process}
\label{subs:approx}
We keep the notation of Theorem \ref{main}, so that $\om$ is a Kähler form on $\Xlc$ of Carlson-Griffiths type; in particular it has Poincaré singularities along $\Dlc$.\\
We define, for any sufficiently small $\ep>0$, a Kähler form $\ome$ on $\Xlc$ by
\[ \ome:=\om+\ddc \psi_{\ep} \]
where $\psi_{\ep}=\frac{1}{N}\sum_{\{a_j<1\}} \chi_{j,\ep}(\ep^{2}+|s_j|^{2})$  for $\chi_{j,\ep}$ functions defined by:
\[\chi_{j,\ep}(\varepsilon^2+ t)= \frac 1{\tau_j}\int_0^t{\frac{(\varepsilon^2+ r)^{\tau_j}- \varepsilon^{2\tau_j}} r}dr\]
for any $t\ge 0$. The important facts to remember about this construction are the following ones, extracted from \cite[section 3]{CGP}:
\begin{enumerate}
\item[$\cdot$] For $N$ big enough, $\ome$ dominates (as a current) a Kähler form on $X$ because $\om$ already does;
\item[$\cdot$] $\psi_{\ep}$ is uniformly bounded (on $X$) in $\ep$;
\item[$\cdot$] When $\ep$ goes to $0$, $\ome$ converges on $\Xlc$ to $\om_{\D}$ having mixed Poincaré and cone singularities along $\D$.\\
\end{enumerate}

As $\ome$ is a Kähler metric on $\Xlc$ with Poincaré singularities along $\Dlc$, the case $J=\emptyset$ treated by Kobayashi (\cite{KobR}) and Tian-Yau (\cite{Tia}), cf section \ref{sec:log}, Theorem \ref{thm:kob},  enables us to find a smooth $\ome$-psh function $\vpe$ on $\Xlc$ satisfying:
\begin{equation}
\label{mae}
(\ome+\ddc \vpe)^{n}=e^{\vpe+F_{\ep}}\ome^{n}
\end{equation}
where 
\[F_{\ep}=f+\psie+\log \left( \frac{\om^{n}}{\prod_{j\in J}(|s_j|^{2}+\ep^{2})^{a_j}\ome^{n}} \right)\]
belongs to $\cka(\Xlc)$ thanks to Lemma \ref{lem:cka} and the assumptions on $f$. We may insist on the fact that the relation $\Fe \in \cka(\Xlc)$ is only qualitative in the sense that we \textit{a priori} don't have uniform estimates on $||\Fe||_{k,\alpha}$. 

\noindent
Besides, $\vpe\in\cka(\Xlc)$ (cf. \cite[section 3]{KobR}) so that in particular, it is bounded on $\Xlc$, $\ome+\ddc \vpe$ defines a complete Kähler metric on $\Xlc$, and the Ricci curvature of $\ome+\ddc \vpe$ bounded (from below) if and only if the one of $\ome$ is bounded (from below). Note that the bounds may \textit{a priori} not be uniform in $\ep$ - however we will show that this is the case. \\
Once observed that $\ome$ converges to a Kähler metric with mixed Poincaré and cone singularities along $\D$, and that equation \eqref{mae} is equivalent to 
\[ (\om+\ddc(\vpe+\psie))^{n}=\frac{e^{f+(\vpe+\psie)}}{\prod_{j\in J}(|s_j|^{2}+\ep^{2})^{a_j}}\om^n \]
the proof of our theorem boils down to showing that one can extract a subsequence of $(\vpe)_{\ep}$ converging to $\vp$, smooth outside $\D$, and such that $\om+\ddc \vp$ has the expected singularities along $\D$.

\subsection{\texorpdfstring{Establishing estimates for $\vpe$}{Establishing estimates for the potential}}
\label{subs:est}
In view of the \textit{a priori} estimates of section \ref{sec:estimates}, we first need to find a bound $\sup |\vpe| \le C$. We will see at the beginning of section \ref{subs:lb} that $\sup_{\ep} \sup_X |\Fe|$ is finite. Therefore, using \ref{cy1} with $\ome$ as reference metric, we have the desired $\mathscr C^0$ estimate: $\sup |\vpe| \le \sup_{\ep} \sup_X |\Fe|$.
So it remains to check that (here uniformly means "uniformly in $\ep$"):
\begin{enumerate}
\item[$(i)$] The bisectional curvature of $(\Xlc,\ome)$ is uniformly bounded from below;
\item[$(ii)$] $\Fe$ is uniformly bounded;
\item[$(iii)$] The Laplacian of $\Fe$ with respect to $\ome$, $\D_{\ome} \Fe$, is uniformly bounded.
\end{enumerate}
Once we will have shown that conditions $(i)-(iii)$ hold, we will get the existence of $C>0$ such that for all $\ep>0, \tr_{\ome}(\ome+\ddc \vpe) \le C$ (by the remarks above, $\ome+\ddc \vpe$ is complete and will have Ricci curvature bounded from below so that the assumptions of Proposition \ref{cy2} are fulfilled). Therefore, we will have $\ome+\ddc \vpe \le C \ome$. Furthermore, as $\vpe$ and $\Fe$ will be bounded, the identity $(\ome+\ddc \vpe)^{n}=e^{\vpe+F_{\ep}}\ome^{n}$ joint with the basic inequality $\det_{\ome}(\ome+\ddc \vpe) \cdot \tr_{\ome+\ddc \vpe} (\ome) \le (\tr_{\ome}(\ome+\ddc \vpe))^{n-1}$ (which amounts to saying that $\sum_{|I|=n-1}\prod_{i\in I} \lambda_i \le \left(\sum_{i=1}^n \lambda_i \right)^{n-1}$) will imply that, up to increasing $C$, $\tr_{\ome+\ddc \vpe} (\ome) \le C$. Therefore, 
\[C^{-1} \ome \le \ome+\ddc \vpe \le C \ome\]
and passing to the limit (after choosing a subsequence so that $(\vpe)_{\ep}$ converges to $\vp$ smooth outside $\Supp(\D)$ - we skip some important details here, cf. section \ref{subs:end}) our solution $\om_{o}+\ddc \vp$ will have mixed Poincaré and cone singularities along $\D$.\\

\subsubsection{A precise expression of the metric}
Before we go any further, we have to give the explicit local expresssions of $\ome$. We recall that $\D= \sum_{j\in J} a_j \D_j+\sum_{k\in K} \D_k$ for some disjoints sets $J,K\subset \mathbb N$, such that for all $j\in J$, $a_j<1$. In the following, an index $j$ (resp. $k$) will always be assumed to belong to $J$ (resp. $K$).

\noindent
First of all, pick some point $p_0\in X$ sitting on $\Supp(\D)$. We choose a neighborhood $U$ of $p_0$ trivializing $X$ and such that $\mathrm {Supp}(\D)\cap U= \{\prod_{J_U} z_j \cdot \prod_{K_U} z_k =0 \}$ for some $J_U\subset J$ and $K_U \subset K$. Then if $i\notin J_U \cup K_U$, $\D_i$ does not meet $U$. To simplify the notations, one may suppose that $J_U=\{1,\ldots,r\}$ and $K_U=\{r+1,\ldots, d\}$. Finally, we stress the point that although $p_0\in \Supp(\D)$, all our computations will be done on $U\cap \Xlc=U\setminus \Supp(\Dlc)$.\\

 So as to simplify the computations, we will use the following (more or less basic) lemma, extracted from \cite[Lemma 4.1]{CGP}:

\begin{lemm}
\phantomsection
\label{lem:coor}
Let $(L_1, h_1),\ldots, (L_d, h_d)$ be a set of hermitian
line bundles on a compact Kähler manifold $X$, and for each index $j= 1,\ldots,d$, let $s_j$ be a section of $L_j$; we assume that the hypersurfaces 
$$Y_j:= (s_j= 0)$$
are smooth, and that they have strictly normal intersections. 
Let $p_0\in \bigcap Y_j$; then there exist a constant $C>0 $ and an open set $V\subset X$ centered at $p_0$, such that for any point $p\in V$ there exists a coordinate system $z= (z_1,\ldots, z_n)$ at $p$ and a trivialization $\theta_j$ for $L_j$ such that:

\begin{enumerate}
\item [$(i)$] For $j= 1,\ldots, d$, we have $Y_j\cap V= (z_j= 0)$;

\item [$(ii)$] With respect to the trivialization $\theta_j$, the metric
$h_j$ has the weight $\varphi_j$, such that 
\begin{equation*}
\varphi_j(p)= 0, \quad d\varphi_j(p)= 0, \quad 
\left|\frac{\partial^{|\alpha|+|\beta|}\varphi_j}{\partial z_{\alpha}\partial \bar z_{\beta}}(p)\right|\le C_{\alpha, \beta}
\end{equation*}
for all multi indexes $\alpha, \beta$.
\end{enumerate}
\end{lemm}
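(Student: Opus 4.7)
The plan is to regard this as a purely local assertion and obtain the desired uniform bounds by a compactness argument, after reducing to a single fixed coordinate chart around $p_0$ combined with a $p$-dependent holomorphic gauge change on each $L_j$. Since the $Y_j$ have simple normal crossings at $p_0$, I would fix once and for all an open neighborhood $U$ of $p_0$ with holomorphic coordinates $w = (w_1,\ldots,w_n)$ such that $Y_j \cap U = \{w_j=0\}$ for $j=1,\ldots,d$, together with smooth trivializations $\theta_j^0$ of each $L_j$ over $U$; let $\psi_j$ denote the (real, smooth) weight of $h_j$ in $\theta_j^0$. Choose a relatively compact open neighborhood $V \Subset U$ of $p_0$; by smoothness, every derivative of the $\psi_j$ is then uniformly bounded on $\bar V$.

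For each $p \in V$, I would take the coordinate system to be simply $z := w$, which already satisfies (i) independently of $p$ (no recentering is needed, since (i) only requires the local equations of $Y_j$ to be $z_j = 0$ on $V$). To arrange (ii), introduce the holomorphic polynomial of degree one
\[
h_{j,p}(z) := \tfrac12\,\psi_j(p) + \sum_{i=1}^n \frac{\partial \psi_j}{\partial z_i}(p)\,(z_i - w_i(p)),
\]
and define the new trivialization $\theta_j := e^{h_{j,p}}\, \theta_j^0$. With respect to $\theta_j$, the weight of $h_j$ is
\[
\varphi_j = \psi_j - 2\,\mathrm{Re}(h_{j,p}).
\]
By construction $\varphi_j(p) = 0$ and $\partial \varphi_j(p) = \partial \psi_j(p) - \partial h_{j,p}(p) = 0$; since $\varphi_j$ is real, $d\varphi_j(p) = 0$ follows. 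Moreover, as $2\,\mathrm{Re}(h_{j,p})$ is affine in $z$, all its derivatives of total order $\ge 2$ vanish, so that for $|\alpha|+|\beta|\ge 2$ one has
\[
\frac{\partial^{|\alpha|+|\beta|}\varphi_j}{\partial z_{\alpha}\partial \bar z_{\beta}}(p) = \frac{\partial^{|\alpha|+|\beta|}\psi_j}{\partial z_{\alpha}\partial \bar z_{\beta}}(p).
\]

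There is essentially no obstacle beyond making sure the constants $C_{\alpha,\beta}$ in (ii) can be taken independently of $p \in V$. But the correction $h_{j,p}$ depends on $p$ only through the two quantities $\psi_j(p)$ and $\partial \psi_j(p)$, which are continuous, and hence bounded, on $\bar V$. Consequently, for $|\alpha|+|\beta|\ge 2$, the derivatives of $\varphi_j$ at $p$ are dominated by the corresponding derivatives of $\psi_j$ on $\bar V$, yielding a constant $C_{\alpha,\beta}$ depending only on $\|\psi_j\|_{\mathscr{C}^{|\alpha|+|\beta|}(\bar V)}$. This gives the lemma with constants uniform in $p$.
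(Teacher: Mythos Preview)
Your argument is correct. The paper itself does not give a proof of this lemma but simply quotes it from \cite[Lemma~4.1]{CGP}; your construction is precisely the standard elementary one that would be expected there. One small remark: depending on whether ``trivialization'' is read as a local frame or as the associated chart map, the sign in $\varphi_j=\psi_j\mp 2\,\mathrm{Re}(h_{j,p})$ flips, but this is purely cosmetic since one can absorb it into the sign of $h_{j,p}$. Your observation that the coordinates need not be recentered at $p$ (since condition~(i) pins $z_j=0$ to $Y_j$, not to $p$) is exactly right and matches how the lemma is used later in the paper.
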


Up to shrinking the neighborhood $V$, we may assume that each coordinate system $(z_1, \ldots , z_n)$ for $V$, as given in Lemma \ref{lem:coor}, satisfies $\sum_i |z_i|^{2} \le 1/2$. Moreover, in order to make the notations clearer, we define, for $i\in \{1,\ldots, n\}$, a non-negative function on $V$ (depending on $\ep$) by
\begin{equation*}
A(i)= 
\begin{cases} 
(|z_i|^{2}+\ep^{2})^{a_i/2}   & \text{if $i\in \{1,\ldots, r\}$;}\\
|z_i| \log \frac{1}{|z_i|^{2}} &\text{if $i\in \{r+1,\ldots, d\}$;}\\
1  &\text{if $i<d$.}\\
\end{cases}
\end{equation*}
Now, for $i,j,k,l \in \{1,\ldots, n\}$, we simply set $A(i,j,k,l):=A(i)A(j)A(k)A(l)$.\\

We first want to check that the holomorphic bisectional curvature of $\ome$ is bounded from below, that is 
\begin{equation}
\label{eq:cm}
\Theta_{\ome}(T_X) \ge -C \ome \otimes \mathrm{Id}_{T_X}
\end{equation}
for some $C>0$ independent of $\ep$, and where $\Theta_{\ome}(T_X)$ denotes the curvature tensor of the holomorphic tangent bundle of $(\Xlc,\ome$). It is useful for the following to reformulate the (intrinsic) condition \eqref{eq:cm} in terms of local coordinates.
Namely, the inequality in \eqref{eq:cm} amounts to saying that the following inequality holds:

\begin{equation}
\label{in2}
\sum_{p, q, r, s}R_{p\bar q r \bar s}(z)v_p\overline{v_q}w_r\overline{w_s}\ge - C|v|_{\ome}^2
|w|_{\ome}^2
\end{equation}
for any 
vector fields $\displaystyle v= \sum_p v_p\frac{\partial}{\partial z_p}$ and 
$\displaystyle w= \sum_r w_r\frac{\partial}{\partial z_r}$. 

\noindent The notation in the above relations is as follows:
in local coordinates, we write
\[\ome= \frac{i}{2}\sum_{p,q}g_{p\bar q} \, dz_p \wedge  d \bar z_q;\]
(so that the $g_{p\b q}$'s actually depend on $\ep$, but we choose not to let it appear in the notations so as to make them a bit lighter) and the corresponding components of the curvature tensor are
\[R_{ p\bar q  r \bar s }:= - \frac{\partial^2 g_{p\bar q}}{\partial z_r\partial \bar z_{s}}+
\sum_{k, l}g^{k\bar l} \frac{\partial g_{ p\bar k}}{\partial z_{r}} \frac{\partial g_{ l\bar q}}{\partial \bar z_{s}}
.\]

\noindent
Looking at the local expression of $\ome$ makes it clear that there exists $C>0$ independent of $\ep$ such that on $V$, $C^{-1} \om_{\D,\ep} \le \ome \le C\, \om_{\D,\ep}$, where
 \[\om_{\D,\ep}:=\sum_{j=1}^{r} \frac{i dz_j\wedge d\bar z_j}{(|z_j|^2+\ep^2)^{a_j}} + \sum_{k=r+1}^{d} \frac{i dz_k\wedge d\bar z_k}{|z_k|^{2} \log^{2}|z_k|^{2}}+\sum_{l=d}^{n} i dz_l\wedge d\bar z_l \] 
Therefore, if $\displaystyle v= \sum_p v_p\frac{\partial}{\partial z_p}$ satisfies $|v|_{\ome}=1$, then for each  $p$, $|v_p| \le A(p)$. We are now going to show the following two facts, which will ensure that the holomorphic bisectional curvature of $\ome$ is bounded from below:
\begin{enumerate}
\item[$(i)$] For every four-tuple $(p,q,r,s)$ with $\#\{p,q,r,s\}\ge 2$, we have $A(p,q,r,s) |R_{p\bar q r \bar s}(z)| \le C$;
\item[$(ii)$] For every $p$, and every $\ome$-unitary vector fields $v,w$, $|v_p|^{2}_{\ome} |w_p|^{2}_{\ome} R_{p\bar p p \bar p} \ge -C$.
\end{enumerate}

In order to prove $(i)-(ii)$, we have to give a precise expression of the metric $\ome$ in some coordinate chart. We will use the coordinates given by Lemma \ref{lem:coor}, which will simplify the computations a lot. We remind that $\ome=\om+\ddc \psie$, and according to \cite[equation (21)]{CGP} and Definition \ref{def:cg} (or \cite[pp. 50-51]{Gri}), the components $g_{p\bar q}$ of $\ome$ are given by:

\begin{eqnarray}
\phantomsection
\label{eqn:metric}
g_{p\bar q}&=& u_{p\bar q} +\frac{\delta_{pq,J}e^{-\vp_p}}{(|z_p|^{2}e^{-\vp_p}+\ep^{2})^{a_p}} +\delta_{p,J} e^{-\vp_p}\frac{\bar z_p \overline{\alpha_{qp}}}{(|z_p|^{2}e^{-\vp_p}+\ep^{2})^{a_p}} + \delta_{q,J}e^{-\vp_q}\frac{z_q \alpha_{qp}}{(|z_q|^{2}e^{-\vp_q}+\ep^{2})^{a_q}}  \nonumber \\
&+& \sum_{j\in J} \frac{|z_j|^{2}\beta_{jpq}}{(|z_j|^{2}e^{-\vp_j}+\ep^{2})^{a_j}}
\left((|z_j|^{2}e^{-\vp_j}+\ep^{2})^{1-a_j}-\ep^{2(1-a_j)}\right)\frac{\d^{2}\vp_j}{\d z_p \d \bar z_q}   \\
&+& \delta_{pq,K} \frac{i dz_p \wedge d \bar z_p}{|z_p|^{2}\log^{2}|z_p|^{2}}+\frac{\delta_{p,K} \lambda_p}{z_p \log^{2}|z_p|^{2}}+\frac{\delta_{q,K} \mu_q}{\bar z_q \log^{2}|z_q|^{2}}+ \sum^{d}_{k=r+1}\frac{\nu_k}{\log |z_k|^{2}} \nonumber
\end{eqnarray}
where $u_{pq},\alpha_{pq}, \beta_{jpq}, \lambda_p, \mu_q, \nu_k$ are smooth functions on $X$ (more precisely on the whole neighborhood $V$ of $p$ in $X$ given by Lemma \ref{lem:coor}). Moreover, $\alpha, \lambda, \mu$ (resp. $\beta$) are functions of the partial derivatives of the $\vp_i$'s; in particular, they vanish at the given point $p$ at order at least $1$ (resp. $2$). Finally, we use the notation $\delta_{p,J}=\delta_{p\in J}$ and $\delta_{pq,J}=\delta_{pq}\delta_{p\in J}$ (\textit{idem} for $K$ instead of $J$).

\subsubsection{Bounding the curvature from below}
First of all, using \eqref{eqn:metric}, and remembering that $\alpha, \beta, \lambda, \mu, $ vanish at $p$, on can give a precise $0$-order estimate on the metric (more precisely on the inverse matrix of the metric), which is a straightforward generalization of \cite[Lemma 4.2]{CGP}:

\begin{lemm}
\phantomsection
\label{lem:ordre0}
In our setting, and for $|z|^{2}+\ep^{2}$ sufficiently small, we have  at the previously chosen point $p$:
\begin{enumerate}
\item[$(i)$] For all $i\in \{1,\ldots,n\}$, \,  $g^{i\b i}= A(i)^{2}(1+\O(A(i)^{2}))$;
\item[$(ii)$] For all $j,k\in \{1,\ldots,n\}$ such that $j\neq k$, \, $g^{j\b k}= \O(A(j,k)^{2})$.
\end{enumerate}
\end{lemm}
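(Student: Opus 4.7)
The strategy is a rescaling followed by a Neumann-series inversion. Introduce the diagonal rescaling $A:=\mathrm{diag}(A(1),\ldots,A(n))$ and set $\tilde g := A\,g\,A$, so that $\tilde g_{p\bar q}=A(p)A(q)\,g_{p\bar q}$. The claims $(i)$--$(ii)$ are equivalent, via $g^{-1}=A\tilde g^{-1}A$, to
\begin{equation*}
(\tilde g^{-1})_{ii}=1+O(A(i)^2),\qquad (\tilde g^{-1})_{jk}=O(A(j)A(k))\ \ \text{for $j\ne k$,}
\end{equation*}
since then $g^{i\bar i}=A(i)^2(\tilde g^{-1})_{ii}$ and $g^{j\bar k}=A(j)A(k)(\tilde g^{-1})_{jk}$.

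The core step is to show that $\tilde g$ is a small perturbation of the identity, i.e.\ $\tilde g=I+R$ with $|R_{p\bar q}|\le C\,A(p)A(q)$ uniformly in $\ep$. I would inspect each term of \eqref{eqn:metric}, using the normalisations $\vp_j(p)=0$ and $d\vp_j(p)=0$ from Lemma \ref{lem:coor}. Since the coefficients $\alpha_{qp}$, $\lambda_p$, $\mu_q$, $\nu_k$ are functions of the first derivatives of the $\vp_j$'s, they vanish at $p$, so the only contributions beyond the leading singular diagonal terms $\delta_{pq,J}\,A(p)^{-2}$ and $\delta_{pq,K}\,A(p)^{-2}$ are the smooth bounded term $u_{p\bar q}(p)$ and the $\beta$-sum
\begin{equation*}
T_{p\bar q}:=\sum_{j\in J}\beta_{j,p,q}(p)\,\frac{|z_j|^2\bigl[(|z_j|^2+\ep^2)^{1-a_j}-\ep^{2(1-a_j)}\bigr]}{(|z_j|^2+\ep^2)^{a_j}}\cdot\frac{\partial^2\vp_j}{\partial z_p\,\partial\bar z_q}(p).
\end{equation*}
The elementary bounds $0\le(|z_j|^2+\ep^2)^{1-a_j}-\ep^{2(1-a_j)}\le (|z_j|^2+\ep^2)^{1-a_j}$ (valid for $a_j<1$) and $|z_j|^2/(|z_j|^2+\ep^2)^{2a_j-1}\le(|z_j|^2+\ep^2)^{2-2a_j}\le C$ (valid for $a_j\le 1$ and $|z|^2+\ep^2\le 1$) show that $T_{p\bar q}$ is uniformly bounded. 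A case analysis on whether $p,q$ lie in $J$, $K$ or neither, after multiplying by $A(p)A(q)$, then yields $\tilde g_{p\bar q}=\delta_{pq,J}+\delta_{pq,K}+O(A(p)A(q))$; performing an affine change of the non-singular coordinates so that $u_{p\bar q}(p)=\delta_{pq}$ for $p,q\notin J\cup K$, one concludes $\tilde g=I+R$ with the required estimate on $R$.

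The inversion of $\tilde g$ is now routine. For $|z|^2+\ep^2$ small, $\sum_l A(l)^2$ is small in the singular directions (and vanishes in the non-singular ones after the change of coordinates), hence $\|R\|_{\mathrm{op}}<1/2$ and $\tilde g^{-1}=\sum_{k\ge 0}(-R)^k$ converges. An induction on $k$ preserves the shape of the estimate: assuming $(R^k)_{p\bar q}=O(A(p)A(q))$, one gets
\begin{equation*}
(R^{k+1})_{p\bar q}=\sum_l R_{p\bar l}(R^k)_{l\bar q}=O\!\left(A(p)A(q)\sum_l A(l)^2\right)=O(A(p)A(q))
\end{equation*}
with a geometrically decaying constant. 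Summing the series produces $(\tilde g^{-1})_{p\bar q}=\delta_{pq}+O(A(p)A(q))$, whence $(i)$ and $(ii)$ via $g^{-1}=A\tilde g^{-1}A$. The step I expect to be the real obstacle is the bookkeeping in the term-by-term inspection of \eqref{eqn:metric}, especially the estimate of $T_{p\bar q}$, where the two elementary inequalities above (crucially using $a_j<1$) are precisely what allow one to extract a clean $O(A(p)A(q))$ factor; the rest is linear algebra.
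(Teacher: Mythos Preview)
Your rescaling-plus-Neumann strategy is the right one and is exactly what the paper is pointing to when it says the lemma is ``a straightforward generalization of \cite[Lemma 4.2]{CGP}''. Two points deserve comment.

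The first is minor bookkeeping. In \eqref{eqn:metric} the second line really consists of two separate summands (compare with the evaluation at $p$ carried out later in \S\ref{subs:lb}): the $\beta$-term $\sum_j|z_j|^2\beta_{jpq}(|z_j|^2e^{-\vp_j}+\ep^2)^{-a_j}$, which vanishes at $p$ since $\beta$ does, and the term $\sum_j\bigl[(|z_j|^2e^{-\vp_j}+\ep^2)^{1-a_j}-\ep^{2(1-a_j)}\bigr]\,\partial^2\vp_j/\partial z_p\partial\bar z_q$, which survives at $p$ but is bounded by $(|z_j|^2+\ep^2)^{1-a_j}$. Also, $\nu_k$ involves second (not first) derivatives of the weights and need not vanish at $p$, but $\nu_k/\log|z_k|^2$ is small regardless. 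So your conclusion $\tilde g=I+R$ with $|R_{p\bar q}|\le CA(p)A(q)$ stands, even though your inventory of the surviving terms is slightly off.

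The second is a genuine gap. Your induction step claims geometric decay via the factor $\sum_lA(l)^2$, but for the $n-d$ non-singular indices one has $A(l)=1$ by definition, so $\sum_lA(l)^2\ge n-d$ is \emph{not} small; your affine coordinate change normalises $u_{p\bar q}(p)$ for non-singular $p,q$ but does not affect $A(l)$. Since the constant $C_1$ in $|R_{p\bar q}|\le C_1A(p)A(q)$ involves the size of the background form $u$ and is not $<1/(n-d)$ in general, the recursion $C_{k+1}\le C_1C_k\sum_lA(l)^2$ does not contract. The fix is short: from $|R_{l\bar i}|\le CA(l)A(i)$ one gets $\|Re_i\|^2=\sum_l|R_{l\bar i}|^2\le nC^2A(i)^2$, hence $\|Re_i\|\le C'A(i)$; on the other hand every entry of $R$ is $o(1)$ as $|z|^2+\ep^2\to 0$ (for non-singular entries this uses the smallness of $(|z_j|^2+\ep^2)^{1-a_j}$ and of $1/|\log|z_k|^2|$), so $\|R\|_{\mathrm{op}}<1/2$. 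Then for $k\ge 2$,
\[
|(R^k)_{p\bar q}|=\bigl|\langle Re_p,\,R^{k-2}Re_q\rangle\bigr|\le\|Re_p\|\,\|R\|_{\mathrm{op}}^{\,k-2}\,\|Re_q\|\le (C')^2A(p)A(q)\,2^{-(k-2)},
\]
and summing over $k$ gives $(\tilde g^{-1})_{p\bar q}=\delta_{pq}+O(A(p)A(q))$ as required.
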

 We insist on the fact that the $\O$ symbol refers to the expression $|z|^{2}+\ep^{2}=|z_1|^{2}+\cdots+|z_n|^{2}+\ep^{2}$ going to zero.\\

To bound the curvature, we will essentially have to deal with the Poincaré part of $\ome$, the other cone part being almost already treated in \cite{CGP}. We could use the fact that $(\Xlc, \om)$ has bounded geometry at any order (cf section \ref{sec:log}), but as mixed terms involving the (regularized) cone metric will appear \--- which is not known to be of bounded geometry\---, we prefer to give the explicit computations for more clarity.\\
For $\lambda$ and $\mu$ any smooth functions on $V$, there exist smooth functions $\lambda_1, \lambda_2, \ldots$ and $\mu_1, \mu_2, \ldots$ such that for any $k\in K$:
\begin{eqnarray*}
\frac{\d}{\d z_k} \left( \frac{\lambda}{z_k \log^{2} |z_k|^{2}} \right) &=& \frac{\lambda_1}{z_k \log^{2} |z_k|^{2}}+\frac{\lambda_2}{z_k^{2} \log^{2} |z_k|^{2}}+\frac{\lambda_3}{z_k^{2} \log^{3} |z_k|^{2}}=\O\left( \frac{1}{|z_k|^{2} \log^{2} |z_k|^{2}}\right) \\
\frac{\d}{\d \bar z_k} \left( \frac{\lambda}{z_k \log^{2} |z_k|^{2}}\right) &=& \frac{\lambda_4}{z_k \log^{2} |z_k|^{2}}+\frac{\lambda_5}{|z_k|^{2} \log^{3} |z_k|^{2}}=\O\left( \frac{1}{|z_k|^{2} \log^{3} |z_k|^{2}}\right) \\
\frac{\d^{2}}{\d z_k\d \bar z_k} \left( \frac{\lambda}{z_k \log^{2} |z_k|^{2}}\right) &=& \frac{\lambda_6}{z_k \log^{2} |z_k|^{2}}+\frac{\lambda_7}{|z_k|^{2} \log^{3} |z_k|^{2}}
+ \frac{\lambda_8}{z_k^{2} \log^{2} |z_k|^{2}} +\\
&+&\frac{\lambda_9}{z_k|z_k|^{2} \log^{3} |z_k|^{2}} + \frac{\lambda_{10}}{z_k^{2} \log^{3} |z_k|^{2}}+\frac{\lambda_{11}}{z_k|z_k|^{2} \log^{4} |z_k|^{2}}\\ 
&=& \O\left( \frac{1}{|z_k|^{3} \log^{3} |z_k|^{2}}\right) \\
\frac{\d}{\d z_k} \left(\frac{\mu}{\log |z_k|^{2}} \right) &=& \frac{\mu_1}{\log |z_k|^{2}}+\frac{\mu_2}{z_k \log^{2} |z_k|^{2}}=\O\left( \frac{1}{|z_k| \log^{2} |z_k|^{2}}\right) \\
\frac{\d}{\d \bar z_k} \left(\frac{\mu}{\log |z_k|^{2}} \right) &=& \frac{\mu_3}{\log |z_k|^{2}}+\frac{\mu_4}{\bar z_k \log^{2} |z_k|^{2}}=\O\left( \frac{1}{|z_k| \log^{2} |z_k|^{2}}\right) \\
\frac{\d^{2}}{\d z_k\d \bar z_k} \left(\frac{\mu}{\log |z_k|^{2}} \right) &=& \frac{\mu_5}{\log |z_k|^{2}}+\frac{\mu_6}{\bar z_k \log^{2} |z_k|^{2}}+\frac{\mu_7}{z_k \log^{2} |z_k|^{2}}+\frac{\mu_8}{|z_k|^{2} \log^{3} |z_k|^{2}}\\
&=&\O\left( \frac{1}{|z_k|^{2} \log^{3} |z_k|^{2}}\right) \\
\frac{\d}{\d z_k} \left( \frac{1}{|z_k|^{2} \log^{2} |z_k|^{2}}\right) &=& \frac{-1}{z_k |z_k|^{2} \log^{2} |z_k|^{2}}+\frac{-2}{z_k|z_k|^{2} \log^{3} |z_k|^{2}}=\O\left( \frac{1}{|z_k|^{3} \log^{2} |z_k|^{2}}\right) \\
\frac{\d^{2}}{\d z_k\d \bar z_k}  \left( \frac{1}{|z_k|^{2} \log^{2} |z_k|^{2}}\right) &=& \frac{1}{|z_k|^{4} \log^{2} |z_k|^{2}}+\frac{4}{|z_k|^{4} \log^{3} |z_k|^{2}}+\frac{6}{|z_k|^{4} \log^{4} |z_k|^{2}}\\
\end{eqnarray*}

As we are mostly interested in the Poincaré part of the metric $g$, we will write $g=g^{(P)}+g^{(C)}$ its decomposition into the Poincaré and the cone part (cf. the expression \eqref{eqn:metric}). Moreover, we write $g^{(P)}=\gamma^0+\gamma $ where $\gamma^0=\sum_{k\in K} \frac{i dz_k \wedge d \bar z_k}{|z_k|^{2} \log^{2} |z_k|^{2}}$. Therefore, if $k\neq l$, $g^{(P)}_{k\b l} = \gamma_{k\b l}$, and the computations above lead to (for every $k,l,r,s \in K$):
\begin{eqnarray}
\phantomsection
\frac{\d g^{(P)}_{k\b l}}{\d z_k} &=& \O \left( \frac{1}{A(k)^2A(l)} \right) \qquad \mathrm{if} \, \,  k\neq l \label{derp1} \\
\frac{\d^2 g^{(P)}_{k\b l}}{\d z_k \d \bar z_r} &=& \O \left( \frac{1}{A(k)^2A(r,l)} \right) \quad \, \, \mathrm{if} \, \,  k\neq l \label{derp2} \\ 
\frac{\d \gamma_{k\b l}}{\d z_r} &=& \O \left( \frac{1}{A(k,l,r)} \right)  \label{derp3} \\
\frac{\d^2 \gamma_{k\b l}}{\d z_r \d \bar z_s} &=& \O \left( \frac{1}{A(k,l,r,s)} \right)  \label{derp4}
\end{eqnarray}
Furthermore, we may note that if $\{p,q,r,s\}\cap J = \emptyset$, then we can see from the expression \eqref{eqn:metric} that $\frac{\d g_{p\b q}}{\d z_r}=\frac{\d g_{p\b q}^{(P)}}{\d z_r }+\O(1)$ as well as $\frac{\d^2 g_{p\b q}}{\d z_r \d\bar z_s}=\frac{\d g_{p\b q}^{(P)}}{\d z_r \d\bar z_s}+\O(1)$.
From this, \eqref{derp1}-\eqref{derp2} and Lemma \ref{lem:ordre0}, we deduce that for every $p,q,r,s\in K$ such that $p \neq q$, the expression $A(p,q,r,s)R_{p\b q r \b s}(z)$ is uniformly bounded in $z\in V\cap \Xlc$.\\

So it remains to study the terms of the form $R_{p\b p r \b s}$ for $p,r,s\in K$. And as mentionned in the last paragraph, the terms in the curvature tensor coming from the cone part (or the smooth part) do not play any role here, so we have:
\begin{eqnarray*}
R_{p\b p r \b s}&=&-\frac{\d^2 g_{p\b p}}{\d z_r \d\bar z_s} + \sum_{1\le k,l \le n}  g^{k\b l} \frac{\d g_{p\b l}}{\d z_r} \frac{\d g_{k\b p}}{\d \b z_s} \\
&=&-\frac{\d^{2}}{\d z_r\d \bar z_s}  \left( \frac{1}{|z_p|^{2} \log^{2} |z_p|^{2}}\right) - \frac{\d^{2} \gamma_{p\b p}}{\d z_r\d \bar z_s} + \sum_{1\le k,l \le n} g^{k\b l} \frac{\d g_{p\b l}^{(P)}}{\d z_r} \frac{\d g_{k\b p}^{(P)}}{\d \b z_s} + \O(1)
\end{eqnarray*}
Using \eqref{derp1}-\eqref{derp4} and Lemma \ref{lem:ordre0}, we see that the only possibly unbounded terms (when multiplied by $A(p)^2A(r,s)$) appearing in the expansion of $R_{p\b p r \b s}$ are coming from $\gamma_0$. More precisely, these are the following ones, appearing in $R_{p\b p p \b p}$ only:
\begin{equation}
\label{unbdd}
-\frac{\d^{2}}{\d z_p\d \bar z_p}  \left( \frac{1}{|z_p|^{2} \log^{2} |z_p|^{2}}\right) + \sum_{p\in \{k,l\}} g^{k\b l} \frac{\d g_{p\b l}^{(P)}}{\d z_p} \frac{\d g_{k\b p}^{(P)}}{\d \b z_p}
\end{equation}

Let us now expand the terms under the sum:

\begin{eqnarray}
\frac{\d g^{(P)}_{k \b p}}{\d z_p} &=& \O \left( \frac{1}{|z_p|^{2} \log^{3} |z_p|^{2}} \right) \qquad \mathrm{if} \, \,  k\neq p \label{unbdd2}  \\
\frac{\d g^{(P)}_{p \b p}}{\d z_p} &=& \frac{-1}{z_p |z_p|^{2} \log^{2} |z_p|^{2}}+\frac{-2}{z_p|z_p|^{2} \log^{3} |z_p|^{2}}+ \O\left(\frac{1}{|z_k|^2 \log^3 |z_k|^2} \right) \label{unbdd3}\\
\left|\frac{\d g_{p\b p}^{(P)}}{\d z_p} \right|^{2} &=& \frac{1}{|z_p|^{6} \log^{4} |z_p|^{2}} \left( 1+ \frac{4}{\log |z_k|^2} + \frac{4}{\log^{2} |z_k|^{2}} +\O(|z_k|)\right) \label{unbdd4} \\ \nonumber
\end{eqnarray}
Now, if we combine Lemma \ref{lem:ordre0} with \eqref{unbdd2}-\eqref{unbdd3}, we see that the remaining possibly unbounded terms (after multiplying by $A(p)^4$) appearing in \eqref{unbdd} are 

\[-\frac{\d^{2}}{\d z_p\d \bar z_p}  \left( \frac{1}{|z_p|^{2} \log^{2} |z_p|^{2}}\right) +  g^{p\b p} \frac{\d g_{p\b p}^{(P)}}{\d z_p} \frac{\d g_{p\b p}^{(P)}}{\d \b z_p}\]
which, thanks to point $(i)$ of Lemma \ref{lem:ordre0} and \eqref{unbdd4}, happens to be a $\O\left(\frac{1}{|z_p|^{4} \log^{4} |z_p|^{2} }\right)$, which finishes to prove that for every $p,q,r,s\in K$, the expression $A(p,q,r,s)R_{p\b q r \b s}(z)$ is uniformly bounded in $z\in V\cap \Xlc$.\\

Now we may look at the terms $R_{p\b q r \b s}$ where $p,q\in K$ but $r,s\notin K$. If $r,s\notin J$, then $A(p,q,r,s)R_{p\b q r \b s}(z)=A(p,q)R_{p\b q r \b s}(z)$ is uniformly bounded in $z\in V\cap \Xlc$ as we can see by looking at the expression of the metric \eqref{eqn:metric}. So now we may suppose that $r$ or $s$ belongs to $J$. The only term in the metric which may cause trouble is $ \sum_{j\in J} \frac{|z_j|^{2}\beta_{jpq}}{(|z_j|^{2}e^{-\vp_j}+\ep^{2})^{a_j}}+
\left((|z_j|^{2}e^{-\vp_j}+\ep^{2})^{1-a_j}-\ep^{2(1-a_j)}\right)\frac{\d^{2}\vp_j}{\d z_p \d \bar z_q}  $. But Lemma \ref{lem:ordre0} enables us to use the computations of \cite[section 4.3]{CGP} word for word, so as to show that $A(p,q,r,s)R_{p\b q r \b s}(z)$ is uniformly bounded in $z\in V\cap \Xlc$.\\

The next step in bounding the curvature of $\ome$ from below consists now in looking at the terms $R_{p\b q r \b s}$ for $p,q \in J$. Then the terms in $g_{p\b q}$ coming from the Poincaré part are of the form $\sum_k \frac{\nu_k}{\log |z_k|^{2}}$ as \eqref{eqn:metric} shows. These terms are uniformly bounded in $V\cap \Xlc$, as well as their  derivatives with respect to the variables $z_r, \bar z_s$ as long as $r,s \notin K$; in that that case \cite[sections 4.3-4.4]{CGP} gives us the expected lower bound for $A(p,q,r,s) R_{p\b q r \b s}$. If now $r\in K$, then we saw earlier that $A(r) \frac{\d}{\d z_r} \left(\frac{\nu_r}{\log |z_r|^{2}} \right)$, $A(s)\frac{\d}{\d \bar z_s} \left(\frac{\nu_s}{\log |z_k|^{2}} \right)$, $A(r)^2 \frac{\d^{2}}{\d z_r\d \bar z_r} \left(\frac{\nu_r}{\log |z_r|^{2}} \right)$ are bounded functions in $V\cap \Xlc$, so that, using Lemma \ref{lem:ordre0}, the boundedness of $A(p,q,r,s) R_{p\b q r \b s}$ is equivalent to the one of $A(p,q,r,s) R_{p\b q r \b s}^{g^{(C)}}$ whenever $p,q \in J$. And by \cite[section 4.3]{CGP}, we know the existence of this bound (which is an upper and lower bound, as $\#\{p,q,r,s\}\ge 2$) . \\

Finally, for the last step, we need to look at mixed terms $R_{p\b q r \b s}$ for $p \in K$ and $q \in J$ (or one of those not belonging to $J\cup K$). As $p\neq q$, the operators $A(r) \frac{\d}{\d z_r}, A(s) \frac{\d}{\d \bar z_s}$ and $A(r,s) \frac{\d^{2}}{\d z_r\d \bar z_r}$ map $g_{p\bar q}$ to a bounded function, as can be checked separately for $g^{(P)}$ (cf. the previous computations) and $g^{(C)}$ (cf. \cite[section 4.3]{CGP}). \\
\noindent
So we are done: $\ome$ has holomorphic bisectional curvature \textit{uniformly}  bounded from below on $\Xlc$.

\subsubsection{\texorpdfstring{Bounding the $\ome$-Laplacian of $F_{\ep}$}{Bounding the Laplacian}}
\label{subs:lb}
Remember that \[\Fe =f+\psie+\log \left( \frac{\om^{n}}{\prod_{j\in J}(|s_j|^{2}+\ep^{2})^{a_j}\ome^{n}} \right)\]
At the point $x$ (which is point $p$ of Lemma \ref{lem:coor}) , the $(p,\b q)$ component of $\ome(x)$ is
\begin{eqnarray*}
g_{p\bar q}(x)&=& u_{p\bar q}(x) +\frac{\delta_{pq,J}}{(|z_p|^{2}+\ep^{2})^{a_p}} +
\sum_{j\in J}  \left((|z_j|^{2}+\ep^{2})^{1-a_j}-\ep^{2(1-a_j)}\right)\frac{\d^{2}\vp_j}{\d z_p \d \bar z_q}(x)  + \\
&+& \delta_{pq,K} \frac{i dz_p \wedge d \bar z_p}{|z_p|^{2}\log^{2}|z_p|^{2}}+ \sum^{d}_{k=r+1}\frac{\nu_k}{\log |z_k|^{2}} 
\end{eqnarray*}
whereas the $(p,\b q)$ component of $\om(x)$ is
\[g_{p\bar q}^{(P)}(x)= u_{p\bar q}(x)+ \delta_{pq,K} \frac{i dz_p \wedge d \bar z_p}{|z_p|^{2}\log^{2}|z_p|^{2}}+ \sum^{d}_{k=r+1}\frac{\nu_k}{\log |z_k|^{2}}\]
Expanding the determinant of those metrics makes it clear that there exists $C>0$ such that 
\[ C^{-1} \le  \frac{\om^{n}}{\prod_{j\in J}(|s_j|^{2}+\ep^{2})^{a_j}\ome^{n}}  \le C\]
so that $\Fe$ is bounded on $\Xlc$.\\

Let us now get to bounding $\Delta_{\omega_{\varepsilon}} \Fe$. Actually we will show that $\pm \ddc \Fe \le C\ome$ for some uniform $C>0$, which is stronger than just bounding the $\ome$-Laplacian of $\Fe$, but we need this strengthened bound if we want to produce Kähler-Einstein metrics by resolving our Monge-Ampère equation.
\noindent
There are three terms in $\Fe$, namely $f$, $\psie$ and $\log \fe$ where \[\fe=\frac{\om^{n}}{\prod_{j\in J}(|s_j|^{2}+\ep^{2})^{a_j}\ome^{n}}\] The first two terms are easy to deal with: indeed, there exists $C>0$ (independent of $\ep$) such that $\ome \ge C^{-1} \om$ on $\Xlc$. Therefore, if one chooses $M$ such that $M\om\pm\ddc f>0$ (the assumptions on $f$ give the existence of such an $M$), then 
$ \ddc f \le CM \ome$. Moreover, $\ome = \om+\ddc \psie>0$  so that $\pm \ddc \psie \le \max(C,1) \ome$. Therefore it only remains to bound $\ddc \log \fe$ now.\\

We will use the following basic identities, holding for any smooth functions $f>0$ and $u,v$ on some open subset of $U\subset X$:
\begin{eqnarray}
\ddc \log f &=& \frac{1}{f} \ddc f + \frac{1}{f^2} df \wedge d^c f  \label{lap1} \\
\ddc \left( \frac 1 f \right)  &=& -\frac{1}{f^2} \ddc f + \frac{2}{f^3} df \wedge d^c f \label{lap2} \\
\ddc (uv)& =& u\, \ddc v + v \,\ddc u +  du \wedge d^c v -d^c u \wedge dv  \label{lap3} \\ 
\nabla (uv) &=& (\nabla u) \, v + u \, (\nabla v) \label{lap4}
\end{eqnarray}

We just saw that $\fe$ is bounded below by some fixed constant $C^{-1}>0$ on $\Xlc$, so that by \eqref{lap1}, $\pm \ddc \log \fe$ will be dominated by some fixed multiple of $\ome$ if we show that both $\pm \ddc \fe \le C \ome$ and $\left|\nabla_{\ep} \fe \right|_{\om} \le C$ for some uniform $C>0$ (the last term denotes the norm computed with respect to $\om$ of the $\ome$-gradient of $\fe$, defined as usual by $d\fe(X)=\ome(\nabla_{\ep}\fe, X)$ for every vector field $X$). For convenience, we will split the computation by writing
\begin{equation}
\label{eqnfe}
\fe =\left(\prod_{j\in J}(|s_j|^{2}+\ep^{2})^{a_j} \cdot  \prod_{k\in K} |s_k|^2\log^2|s_k|^2 \cdot \ome^{n} \right)^{-1} \cdot \left(\prod_{k\in K} |s_k|^2\log^2|s_k|^2 \cdot \om^{n} \right)
\end{equation}
By \eqref{lap2}-\eqref{lap3}, we only need to check that the gradient $\nabla_{\ep}$ of the terms inside the parenthesis is bounded, and that their $\pm \ddc$ is dominated by some fixed multiple of $\ome$. Let us begin with the second one, which is simpler:

\begin{lemm}
\phantomsection
\label{lem:calc}
Let $\om$ be a Kähler form of Carlson-Griffiths type on $\Xlc$, and let $\om_0$ be some smooth Kähler form on $X$. We set \[V=\left(\prod_{k\in K} |s_k|^2\log^2|s_k|^2\right) \cdot \frac{\om^{n}}{\om_0^n}\]
Then there exists $C>0$ such that $\pm V$ is $C\om$-psh on $\Xlc$.
\end{lemm}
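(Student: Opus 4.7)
The plan is to exploit the fact that $V$ is really a bounded function on $\Xlc$ with bounded quasi-coordinate Hölder norms, and that $\om$, being of Carlson--Griffiths type, induces bounded geometry on $\Xlc$ in the sense of section \ref{sec:log}. Once both objects are controlled uniformly in a common chart system, the inequality $-C\om \le \ddc V \le C\om$ will come for free from the boundedness of the coefficients of $\ddc V$ in those charts.

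The first step is to observe that Lemma \ref{lem:cka} applies verbatim and gives $\log V = F_0 \in \cka(\Xlc)$ for every $k$ and $\alpha \in (0,1)$. In particular $\log V$ is bounded on $\Xlc$, so $V$ itself is bounded above and bounded away from zero, and composing with the exponential we get $V \in \cka(\Xlc)$ for every $k, \alpha$. Away from $\Supp(\Dlc)$ the metric $\om$ is smooth and comparable to $\om_0$, so $\pm V$ is trivially $C\om$-psh on any relatively compact subset of $X \setminus \Supp(\Dlc)$; the only real issue is near $\Dlc$, and this is precisely where the quasi-coordinate formalism of Definition \ref{def:cka} is designed to work.

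The second step is the quasi-coordinate comparison. Using the maps $\Psi_\eta : V \to \Xlc$ from section \ref{sec:log} (extended product-wise when $\Dlc$ has several branches meeting at a point), the pullback $\Psi_\eta^* \om$ is uniformly (in $\eta$ close to $1$) equivalent to the Euclidean metric on $V \subset \mathbb{C}^n$, since this is exactly the content of the bounded geometry at order zero of a Carlson--Griffiths type metric. On the other hand, since $V \in \cka(\Xlc)$ with $k \ge 2$, the pullback $\Psi_\eta^* V$ has $\mathscr{C}^2$ norm on $V$ bounded independently of $\eta$, so the coefficients of $\ddc(\Psi_\eta^* V)$ are uniformly bounded functions on $V$. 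Combining these two facts yields a constant $C>0$ with
\[ -C\, \Psi_\eta^* \om \;\le\; \ddc(\Psi_\eta^* V) \;\le\; C\, \Psi_\eta^* \om \]
uniformly in $\eta$. Because the maps $\Psi_\eta$ are local biholomorphisms whose images cover a fixed neighborhood of $\Supp(\Dlc)$ in $\Xlc$ as $\eta \to 1$, the inequality descends to $\Xlc$ itself, which concludes the argument when combined with the trivial case far from $\Dlc$.

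I expect no serious obstacle, since the hard analytic input---that Carlson--Griffiths metrics have bounded geometry in quasi-coordinates and that $F_0$ lies in $\cka(\Xlc)$---has already been packaged in Lemma \ref{lem:cka} and the surrounding discussion. The only point to be careful about is verifying that the finitely many local quasi-coordinate charts really do cover a full neighborhood of $\Supp(\Dlc)$ in $\Xlc$, but this is standard once one takes the usual product of one-variable quasi-coordinates in the directions normal to the branches of $\Dlc$ and honest coordinates in the remaining directions.
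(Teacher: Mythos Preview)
Your argument is correct and is in fact the alternative the paper itself points to in the last sentence of its proof (``we could have used the usual quasi-coordinates as in \ref{lem:cka} to conclude''). The paper instead works by hand: it writes out $\om^n$ in the adapted coordinates of Lemma \ref{lem:coor} (formula \eqref{det}), isolates the singular building blocks $\frac{1}{\log|z_k|^2}$, $|z_k|^2\log|z_k|^2$ and $|z_k|^2\log^2|z_k|^2$, and computes their first and second partial derivatives explicitly, checking that $|(\ddc\kappa)_{i\bar j}|\le C/A(i,j)$. Your route is cleaner and more conceptual; the paper's has the advantage of being self-contained and of delivering a bit more than the statement asks---namely the $\ome$-gradient bound and the $\ome$-psh bound (not just the $\om$-psh bound)---which is what the surrounding argument in \S\ref{subs:lb} actually consumes via \eqref{lap1}--\eqref{lap4}. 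These extras also follow from your approach once you note $V\in\mathscr C^{1,\alpha}_{qc}$ and $\ome\ge c\,\om$, but you should say so explicitly. One caution on paper structure: the proof of Lemma \ref{lem:cka} forward-references ``the computations of Lemma \ref{lem:calc}'' (precisely the expansion \eqref{det}), so if your argument replaces the paper's, that expansion must be relocated---an expository, not a logical, issue.
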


\begin{proof}
We write, with our usual coordinates (cf Lemma \ref{lem:coor}) :
\begin{eqnarray}
\om^n &=& \prod_{k\in K} \frac{1}{|z_k|^2\log^2|z_k|^2} \left(1+ \sum_{K_i \subset K} A_i \prod_{k_i \in K_i} \frac{1}{\log |z_{k_i}|^2}\right) \label{det} \\
&+& \sum _{K_j,K_l,K_m,K_p\subset K }  A_{jlmp}\prod_{k_j\in K_j} \frac{1}{ |z_{k_j}|^2 \log^2 |z_{k_j}|^2} \cdot  \prod_{k_l\in K_l} \frac{1}{ z_{k_l} \log^2 |z_{k_l}|^2} \cdot \,\,  \nonumber \\ 
& & \cdot  \prod_{k_m\in K_m} \frac{1}{ \bar z_{k_m} \log^2 |z_{k_m}|^2} \cdot \prod_{k_p \in K_p}\frac{1}{\log |z_{k_p}|^2} \,\,\,\cdotp  \,   \Omega \nonumber
\end{eqnarray}
for $\Omega$ some smooth volume form on $X$ and where the second sum is taken over the subsets $K_j,K_l,K_m,K_p$ of $K$ that are disjoint, and where $A_i, A_{jlmp}$ are smooth functions on the whole $X$. Let us apply the operators $A(i,j) \frac{\d}{\d z_i \d \bar z_j}$ and $g^{i\bar j} \frac{\d}{\d z_i} \cdot \frac{\d} {\d \bar z_j}$ to $ \frac{1}{\log |z_k|^2}, z_k, \bar z_k,|z_k|^2 \log |z_k|^2$ and $|z_k|^2 \log^2 |z_k|^2$, and check that we obtain bounded functions. We already did it for the first term, so we only have to compute:
\begin{eqnarray*}
\frac{\d}{\d z_k}(|z_k|^2 \log |z_k|^2) &=& \bar z_k \log |z_k|^2 +\bar z_k =\O(1) \\
\frac{\d^2}{\d z_k \d \bar z_k}(|z_k|^2 \log |z_k|^2) &=& \log |z_k|^2+2=\O\left( \frac{1}{|z_k|^2 \log^2 |z_k|^2}\right)\\
\frac{\d}{\d z_k}(|z_k|^2 \log^2 |z_k|^2) &=& \bar z_k \log^2 |z_k|^2 +2\bar z_k \log |z_k|^2=\O(1) \\
\frac{\d^2}{\d z_k \d \bar z_k}(|z_k|^2 \log^2 |z_k|^2) &=& \log^2 |z_k|^2+4\log |z_k|^2 +2=\O\left( \frac{1}{|z_k|^2 \log^2 |z_k|^2}\right)\\
\end{eqnarray*}
This shows that the $\ome$-gradient of these factors (denote them generically $\kappa$) is bounded. As for $\ddc \kappa $, the previous computations show that in coordinates, its $(i,j)$-th term is uniformly bounded by $C A(i,j)$ for every $i,j$ (this is actually stronger than saying that it becomes bounded when multiplied with $g^{i\b j}$, condition which would however be sufficient to show that the $\ome$-Laplacian is bounded). Therefore, as the matrix of $\ome$ can be written $\mathrm{diag}(A(1)^2, \ldots, A(n)^2)+\O(1)$ in coordinates, and using the Cauchy-Schwarz inequality, one easily obtains $C>0$ such that $\pm \ddc \kappa \le C \ome$. \\
In fact, once we we saw that the only singular terms were $ \frac{1}{\log |z_k|^2},|z_k|^2 \log |z_k|^2$ and $|z_k|^2 \log^2 |z_k|^2$, we could have used the usual quasi-coordinates as in \ref{lem:cka} to conclude.
\end{proof}

Let us now get to the term inside the first parenthesis of \eqref{eqnfe}. For this, notice that in the expansion of $\ome^n$, we find the terms of \eqref{det} multiplied by terms of the form
\[C(z)+\sum_{I\subsetneq J} A_I(z) \prod_{i\in I} (|z_i|^2e^{-\vp_i}+\ep^2)^{a_i}\]
where $C(z)$ and $A_I(z)$ are sums of terms of the form
\begin{eqnarray*}
&& B(z) \prod_{j_l\in J_l}[(|z_{j_l}|^{2}e^{-\vp_{j_l}}+\ep^{2})^{1-a_{j_l}}-\ep^{2(1-a_{j_l})}] \cdot \prod_{j\in J_k} \frac{z_{j_k} \alpha_{j_k}}{(|z_{j_k}|^{2}e^{-\vp_{j_k}}+\ep^{2})^{\lambda_{j_k} a_{j_k}}} \cdot \,\, \cdots\\
&\cdots & \,\, \cdot \prod_{j\in J_m} \frac{\bar z_{j_m} \bar \alpha_{j_m}}{(|z_{j_m}|^{2}e^{-\vp_{j_m}}+\ep^{2})^{\lambda_{j_m} a_{j_m}}}
\prod_{j_p\in J_p} \frac{|z_{j_p}|^{2}\beta_{j_p}}{(|z_{j_p}|^{2}e^{-\vp_{j_p}}+\ep^{2})^{a_{j_p}}}   
\end{eqnarray*}
where $I,J_l,J_k, J_m,J_p$ are disjoint subsets of $J$, and where $B(z)$ is smooth independent of $\ep$, $\alpha_j$ is smooth and vanishes at $x$, $\beta_j$ is smooth and vanishes at order at least $2$ at $p$, and $\lambda_j\in \{0, 1/2\}$. And now, using Lemma \ref{lem:ordre0} and \cite[section 4.5]{CGP} (we must slightly change the argument therein as said above to control the $\ddc$ with respect to $\ome$ and not only $\De$), we can conclude that the appropriate $\ddc$ (resp. gradients) of those quantities are dominated by $C\ome$ (resp. bounded). Combining this with the previous computations, we deduce that $\De \Fe$ is bounded on the whole $\Xlc$.

\subsection{End of the proof}
\label{subs:end}
Remember that we wish to extract from the sequence of smooth metrics $\ome + \ddc \vpe$ on $\Xlc$ some subsequence converging to a smooth metric on $X\setminus \Supp(\D)$.
In order to do this, we need to have \textit{a priori} $\mathscr C^k$ estimates for all $k$. The usual bootstrapping argument for the Monge-Ampère equation allows us to deduce those estimates from the $\mathscr C^{2,\alpha}$ ones for some $\alpha\in ]0,1[$. The crucial fact here is that we have at our disposal the following \textit{local} result, taken from \cite{Gilb} (see also \cite{Siu}, \cite[Theorem 5.1]{Bl}), which gives interior estimates. It is a consequence of Evans-Krylov's theory: 
\begin{theo}
Let $u$ be a smooth psh function in an open set $\Omega \subset \CC^n$ such that $f:= \det(u_{i \bar j})>0$. Then for any $\Omega' \Subset \Omega$, there exists $\alpha \in]0, 1[$ depending only on $n$ and on upper bounds for $||u||_{\mathscr C^{0}(\Omega)}$, $\sup_{\Omega} \, \Delta \vp, ||f||_{\mathscr C^{0, 1}(\Omega)}, 1/\inf_{\Omega} \, f$, and $C>0$ depending in addition on a lower bound for $d(\Omega', \d \Omega)$ such that: 
\[||u||_{\mathscr C^{2, \alpha}(\Omega')}\le C.\]
\end{theo}

In our case, we choose some point $p$ outside the support of the divisor $\D$, and consider two coordinate open sets $\Omega' \subset \Omega$ containing $p$, but not intersecting $\mathrm{Supp}(\D)$. In that case, we may find a smooth Kähler metric $\omega_p$ on $\Omega$ such that on $\Omega'$, the covariant derivatives at any order of $\ome$ are uniformly bounded (in $\ep$) with respect to $\om_p$. Then one may take $u=\vpe$ in the previous theorem, and one can easily check that there are common upper bounds (i.e. independent of $\ep$) for all the quantities involved in the statement. This finishes to show the existence of uniform \textit{a priori} $\mathscr C^{2, \alpha}(\Omega')$ estimates for $\vpe$.\\

As we mentioned earlier, the ellipticity of the Monge-Ampère operator automatically gives us local \textit{a priori} $\mathscr C^k$ estimates for $\vpe$, which ends to provide a smooth function $\vp$ on $X\setminus \Supp(\D)$ (extracted from the sequence $(\vpe)_{\ep}$) such that $\omi=\om+\ddc \vp$ defines a smooth metric outside $\mathrm{Supp}(\D)$ satisfying 
\[ (\om+\ddc \vp)^{n} =\frac{e^{\vp+f}}{\prod_{j\in J}|s_j|^{2a_j}}\,\om^{n}. \]
Moreover, the strategy explained at the beginning of the previous section \ref{subs:est} and set up all along the section shows that this metric $\vp$ has mixed Poincaré and cone singularities along $\D$, so this finishes the proof of the main theorem.

\subsection{Remarks}

It could also be interesting to study the following equation: 
\[ (\om+\ddc \vp)^{n} =\frac{e^{f}}{\prod_{j\in J}|s_j|^{2a_j}}\,\om^{n} \]
where $\om$ is of Carlson-Griffith's type, and asked whether its eventual solutions have mixed Poincaré and cone singularities. This equation has been recently studied and solved by H. Auvray in \cite[Theorem 4]{Auvray} in the case where $\Dklt=0$ (the "logarithmic case"), and for $f$ vanishing at some order along $\D$. To adapt his results, one would need to show that one can make a choice of $\psie$ so that $\Fe$ vanishes along $\Dlc$ at some fixed order, what we have been unable to do so far.\\
However, adapting some recent results of pluripotential theory, we are able to prove the existence (and uniqueness) of solutions  to following equation:
\[(\om_0+\ddc \vp)^{n} =\frac{e^{f}}{\prod_{r\in J\cup K}|s_{r}|^{2a_{r}}}\,\om_0^{n}\]
where $\om_0$ is a Kähler form on $X$. We can't say much about the regularity of $\vp$; so far we only know that $\vp\in \mathcal{E}^1(X,\om_0)$.

\section{A vanishing theorem for holomorphic tensor fields}
Given a pair $(X,\D)$, where $X$ is a compact Kähler manifold and $\D=\sum a_i \D_i$ a $\R$-divisor with simple normal crossing support such that $0\le a_i \le 1$, there are many natural ways to construct holomorphic tensors attached to $(X,\D)$. 

\noindent
To begin with, one defines the tensor fields on a manifold $M$, which are contravariant of degree $r$ and covariant of degree $s$
as follows
\begin{equation}
\label{11}
T^r_sM:= \left(\otimes ^rT_M\right)\otimes \left(\otimes ^sT_M^\star\right).
\end{equation}
In our present context, we consider $M:= X_0$, that is to say the
Zariski open set $X\setminus \Supp(\D)$. Let us recall the definition of the \textit{orbifold tensors} introduced by F. Campana \cite{Camp2}. To avoid a possible confusion with the standard orbifold situation (\textit{ie} when $a_i=1-\frac 1 m$ for some integer $m$), we will not use his terminology and refer to these tensors as \textit{$\D$-holomorphic tensors}. \\

Let $x\in X$ be a point; since the hypersurfaces $(\D_i)$ have strictly normal intersections, there exist a small open set $\Omega\subset X$, together with a coordinate system $z= (z_1,\ldots, z_n)$ 
centered at $x$ such that $\D_i \cap \Omega= (z_i= 0)$ for $i= 1,\ldots, d$ and 
$\D_i\cap \Omega= \emptyset$ for the others indexes. 
We define the locally free sheaf $T^r_s(X|\D)$ generated as an $\Ox$-module by the tensors 
$$z^{\lceil (h_I- h_J)\cdot a\rceil}\frac{\partial} {\partial z_{ I}}\otimes dz^J$$
where the notations are as follows:

\begin{enumerate}

\item $I$ (resp. $J$) is a collection of positive integers in $\{1,\ldots, n\}$ of cardinal 
$r$ (resp. $s$) (we notice that we may have repetitions among the 
elements of $I$ and $J$, and we count each element according to its multiplicity).

\item For each $1\le i\le n$, we denote by 
$h_I(i)$ the multiplicity of $i$ as element of the collection $I$.

\item For each $i= 1,\ldots, d$ we have $a_i:= 1-\tau_i$, and
$a_i= 0$ for $i\ge d+ 1$.

\item We have 
$$z^{\lceil (h_I- h_J)\cdot a\rceil}:= \prod_i {(z^i)}^{\lceil (h_I(i)- h_J(i))\cdot 
a_i\rceil}$$

\item If $I= (i_1,\ldots, i_r)$, then we have
$${\partial\over \partial z_{I}}:= {\partial\over \partial z_{i_1}}\otimes \cdots
\otimes {\partial\over \partial z_{i_r}}$$
and we use similar notations for $dz^J$.

\end{enumerate}

\noindent Hence the holomorphic tensors we are considering here have
prescribed zeros/poles near $X\setminus X_0$, according to the 
multiplicities of $\D$. In the cone case ($\Dlc=0$), those tensors have a nice interpretation (\cite[Lemma 8.2]{CGP}):

\begin{lemm}
\phantomsection
\label{lem32} 
Assume $\Dlc=0$, and let $u$ be a smooth section of the bundle $T^r_s(X_0)$. Then 
$u$ corresponds to a holomorphic section of $T^r_s(X|D)$ if and only if 
$\bar \partial u= 0$ and $u$ is bounded with respect to some metric with cone singularities along $\D$. 
\end{lemm}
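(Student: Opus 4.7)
The approach is to localize the question and reduce it to a holomorphic extension statement with non-integer growth bounds along a simple normal crossing divisor. Near a point $x\in X$, I would pick coordinates $(z_1,\ldots,z_n)$ on a chart $\Omega$ so that $\D\cap\Omega=\{z_1\cdots z_d=0\}$ with $a_i\in[0,1)$ (no log-canonical part, by hypothesis), and work with the diagonal model $\omega_c:=\sum_{i\le d}\frac{i\,dz_i\wedge d\bar z_i}{|z_i|^{2a_i}}+\sum_{i>d}i\,dz_i\wedge d\bar z_i$. Since any two metrics with cone singularities along $\D$ yield pointwise equivalent norms on tensors, testing boundedness is metric-independent and it is enough to work with $\omega_c$. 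A direct computation shows that the basis tensor $\frac{\partial}{\partial z_I}\otimes dz^J$ has pointwise $\omega_c$-norm equal to $\prod_{i\le d}|z_i|^{a_i(h_J(i)-h_I(i))}$, and the key point is that this tensor basis is $\omega_c$-orthogonal since $\omega_c$ is diagonal in the coordinates $z_i$.

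For the direct implication I would write a local holomorphic section of $T^r_s(X|\D)$ as $u=\sum_{I,J}f_{I,J}(z)\,z^{\lceil(h_I-h_J)\cdot a\rceil}\frac{\partial}{\partial z_I}\otimes dz^J$ with $f_{I,J}$ holomorphic on $\Omega$. The identity $\bar\partial u=0$ is immediate, and the pointwise norm is $|u|_{\omega_c}^2=\sum_{I,J}|f_{I,J}|^2\prod_{i\le d}|z_i|^{2(\lceil-m_ia_i\rceil+m_ia_i)}$ with $m_i:=h_J(i)-h_I(i)$. Since $\lceil x\rceil\ge x$ for every real $x$, each exponent is nonnegative, so each summand is locally bounded; compactness of $X$ and a finite cover by such charts then yield the global bound.

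The converse is the substantive direction. Given $u$ smooth, $\bar\partial$-closed, and bounded against $\omega_c$, I would expand $u=\sum_{I,J}u_{I,J}\frac{\partial}{\partial z_I}\otimes dz^J$. Orthogonality of the basis and the $L^\infty$-bound force $|u_{I,J}(z)|\le C\prod_{i\le d}|z_i|^{-m_ia_i}$ on $\Omega\cap X_0$, while $\bar\partial u=0$ implies each $u_{I,J}$ is holomorphic there. To realize $u$ as a section of $T^r_s(X|\D)$ I would introduce $v_{I,J}:=u_{I,J}\cdot\prod_{i\le d}z_i^{-\lceil-m_ia_i\rceil}$, which is holomorphic on $\Omega\setminus D$ (with $D:=\{z_1\cdots z_d=0\}$) and satisfies $|v_{I,J}(z)|\le C\prod_{i\le d}|z_i|^{\beta_i}$ where $\beta_i:=-m_ia_i-\lceil-m_ia_i\rceil\in(-1,0]$. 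Once $v_{I,J}$ is shown to extend holomorphically to $\Omega$, the expression $u=\sum v_{I,J}\,z^{\lceil(h_I-h_J)\cdot a\rceil}\frac{\partial}{\partial z_I}\otimes dz^J$ realizes $u$ as a holomorphic section of $T^r_s(X|\D)$ by the very definition of this sheaf.

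The main obstacle is this last extension step. The decisive feature is the strict inequality $\beta_i>-1$: developing $v_{I,J}$ as a Laurent series in the variable $z_i$ with the remaining coordinates fixed outside $D$ yields Laurent coefficients controlled by $Cr^{-n+\beta_i}$ as $r\to 0$, so the coefficient of $z_i^n$ with $n<0$ must vanish because $-n+\beta_i>0$. Iterating over $i=1,\ldots,d$, together with a Hartogs-type extension across the codimension-$2$ singular locus of $D$ --- or equivalently, the $L^2$-version of Riemann's extension theorem, applicable because $\beta_i>-1$ makes $v_{I,J}$ locally $L^2$ across $D$ --- produces the required holomorphic extension of $v_{I,J}$ to $\Omega$ and concludes the argument.
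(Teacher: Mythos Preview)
Your argument is correct and matches the paper's approach. The paper states this lemma without proof (citing \cite{CGP}), but its proof of the more general Proposition~\ref{prop:bdo} specializes, when $\Dlc=0$, to exactly your argument: local reduction to the diagonal cone model, orthogonality of the basis tensors $\frac{\partial}{\partial z_I}\otimes dz^J$, the bound via $\lceil x\rceil\ge x$ in one direction, and in the other direction the extension step using $\lceil x\rceil-x<1$ (your $\beta_i>-1$) so that the coefficient functions are locally $L^2$ and extend holomorphically across the divisor.
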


In \cite{CGP}, the vanishing and parallelism theorems are proved using the classical Bochner formula with an appropriate cut-off function for the space of bounded (for the cone metric) holomorphic sections of $T_s^r(X_0)$, and the lemma above enables to transfer this property to $\D$-holomorphic tensors. 

\noindent
Unfortunately, there is no such simple correspondence in the general log-canonical case. For example, if $\D$ has only one component (with coefficient $1$) of local equation $z=0$, then 
$ \frac{dz}{z}$ is a local section of $T_1^0(X|\D)$ but it is not bounded with respect to any metric having Poincaré singularities along $\D$. 

\noindent
The idea is to force $\D$-holomorphic tensors to be bounded by twisting them with the trivial line bundle $L=\Ox$ equipped with the singular hermitian metric 
\[h_L=e^{-2s \sum_k \log \log \frac{1}{|s_k|^2}}=  \prod_{k\in K} \frac{1}{\log^{2s} |s_k|^2}\]
where the $(s_k)_{k\in K}$ are the sections of the divisors $\D_k$ appearing in $\Dlc=\lceil \D \rceil$. In more elementary terms, we just change the reference metric measuring those tensors. Then, using a twisted Bochner formula, we will be able to carry on the computations done in \cite{CGP} to obtain the vanishing. It will be practical for the following to introduce the following notation:

\begin{defi}
\phantomsection
\label{def:bdd}
Let $(X,\D)$ be a pair such that $\D$ has simple normal crossing support and coefficients in $[0,1]$. The space of bounded holomorphic tensors of type $(r,s)$ for $(X,\D)$ is defined by
\[\mathscr H^{r,s}_B(X|\D)=\{u \in \mathscr C^{\infty}(X_0, T_s^r (X_0))\, ; \,  \exists C; \,  |u|_h^2 \le C \, \, \mathrm{and}\, \,  \bar \partial u=0 \}\]
where $h=g_{r,s}\otimes h_L$ is a metric on $T^r_s(X_0)$ induced by $h_L$ and a metric $g$ on $X_0$ having mixed Poincaré and cone singularities along $\D$.
\end{defi}
Of course, this definition does not depend on the choice of the metric $g$ having Poincaré and cone singularities along $\D$; it coincides with the one introduced in \cite{CGP} for klt pairs. The main point about this definition, which legitimates it, consists in the following proposition giving the expected identification between bounded and  $\D$-holomorphic tensors: 

\begin{prop}
\phantomsection
\label{prop:bdo}
With the previous notations, we have a natural identification: 
\[\mathscr H ^{r,s}_B(X|\D) = H^0(X, T_s^r(X|\D)).\]
\end{prop}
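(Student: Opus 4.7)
\medskip

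\noindent \textbf{Proof plan.} The statement is local on $X$ and the inclusion of sheaves into $T^r_s(X_0)$ is already built in, so the plan is to pick an adapted coordinate chart near a point of $\Supp(\D)$, compute pointwise norms, and check the two inclusions $\supseteq$ and $\subseteq$ by comparing a bounded holomorphic tensor with the distinguished generators of $T^r_s(X|\D)$. Throughout, using the local model from the introduction, we may compute with $\om_{\rm mod}$ in place of the chosen metric $g$ (since the two are uniformly equivalent), so the metric is diagonal in the coordinate frame and the basis $(\d/\d z_I)\otimes dz^J$ of $T^r_s(X_0)$ is orthogonal.

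\medskip

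\noindent \textbf{First inclusion $H^0(X,T^r_s(X|\D))\subseteq \mathscr H^{r,s}_B(X|\D)$.} I would fix a generator $\sigma_{I,J}:=z^{\lceil(h_I-h_J)\cdot a\rceil}\,\d/\d z_I\otimes dz^J$ and compute $|\sigma_{I,J}|^2_h$ directly. For a klt index $j$ (so $a_j<1$) the contribution factors as $|z_j|^{2(\lceil n_j a_j\rceil - n_j a_j)}$ with $n_i:=h_I(i)-h_J(i)$, hence lies in $[0,2)$ and is bounded. For an lc index $k$ (so $a_k=1$, $\lceil n_k\rceil = n_k$), the generator contributes $|\log|z_k|^2|^{-2n_k}$ in the $g$-norm, and the twist $h_L=\prod_k(-\log|s_k|^2)^{-2s}$ adds an extra factor $|\log|z_k|^2|^{-2s}$; the total exponent $-2(n_k+s)$ is $\le 0$ because $n_k\ge -h_J(k)\ge -s$. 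The smooth directions are harmless. Hence $|\sigma_{I,J}|_h$ is bounded, and any holomorphic section of $T^r_s(X|\D)$, being an $\Ox$-linear combination of the $\sigma_{I,J}$, is bounded.

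\medskip

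\noindent \textbf{Second inclusion $\mathscr H^{r,s}_B(X|\D)\subseteq H^0(X,T^r_s(X|\D))$.} This is the substantive part. Let $u\in\mathscr H^{r,s}_B(X|\D)$. Writing $u=\sum_{I,J}u_{I,J}\,\d/\d z_I\otimes dz^J$ with $u_{I,J}\in\O(X_0\cap\Omega)$ and using orthogonality for $\om_{\rm mod}$, the boundedness $|u|_h^2\le C$ splits into pointwise bounds
\[
|u_{I,J}(z)|\le C\,\prod_j |z_j|^{a_j n_j}\prod_k |z_k|^{n_k}\,|\log|z_k|^2|^{n_k+s}.
\]
I would then compare $u_{I,J}$ with the generator's monomial prefactor and set $v_{I,J}:=u_{I,J}/z^{\lceil(h_I-h_J)\cdot a\rceil}$, which is holomorphic on $X_0\cap\Omega$; showing $u$ comes from a (unique) section of $T^r_s(X|\D)$ amounts to showing $v_{I,J}$ extends holomorphically across the divisor. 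The above bound gives
\[
|v_{I,J}(z)|\le C\,\prod_j |z_j|^{a_j n_j-\lceil a_j n_j\rceil}\prod_k |\log|z_k|^2|^{n_k+s},
\]
with exponents $a_j n_j-\lceil a_j n_j\rceil\in(-1,0]$ and $n_k+s\ge 0$. Thus $v_{I,J}$ is dominated by $\prod_j|z_j|^{-\varepsilon_j}\cdot(\text{power of }\log)$ with $\varepsilon_j<1$, so $v_{I,J}\in L^p_{\rm loc}(\Omega)$ for every $p<\infty$. Riemann's removable singularity theorem (or the $L^2$-version applied across each smooth hypersurface $\D_i\cap\Omega$, then iterated) then provides a holomorphic extension to $\Omega$, proving that $u$ is locally a section of $T^r_s(X|\D)$ and completing the identification.

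\medskip

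\noindent \textbf{Main obstacle.} The only delicate point is the extension step for $v_{I,J}$: the klt directions introduce a fractional singularity $|z_j|^{-\varepsilon_j}$ that is not absorbed by the floor/ceiling identities, and the lc directions introduce genuine logarithmic growth from the twist $h_L$. Both are mild enough to fall within the scope of Riemann extension, but checking that they combine correctly---in particular, that the choice of exponent $2s$ in the definition of $h_L$ is exactly what is needed both for the direct inclusion (boundedness of generators) and for the reverse inclusion (mild-enough growth of coefficients)---is the heart of the argument.
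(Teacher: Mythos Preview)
Your proposal is correct and follows essentially the same route as the paper: both arguments localize to the model chart, compute the $h$-norm of the generators $z^{\lceil(h_I-h_J)\cdot a\rceil}\,\partial/\partial z_I\otimes dz^J$ to obtain $H^0\subseteq\mathscr H^{r,s}_B$, and for the reverse inclusion divide the coefficient $u_{I,J}$ by the monomial prefactor and appeal to $L^2$-Riemann extension across the divisor. One minor slip: the assertion that $v_{I,J}\in L^p_{\rm loc}$ for \emph{every} $p<\infty$ is too strong, since the klt factor $|z_j|^{-\varepsilon_j}$ with $\varepsilon_j\in(0,1)$ is only in $L^p$ for $p<2/\varepsilon_j$; but $L^2$ is all that is needed (and is exactly what the paper invokes), so the argument goes through.
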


\begin{proof}
We only need to check it locally on $\Omega = ({\mathbb{D}^*})^p\times ({\mathbb{D}^*})^q\times \mathbb{D}^{n-(k+l)}$, where the boundary divisor restricted to $\Omega$ is given by $\sum_{k=1}^p d_k [z_i=0] + \sum_{k=p+1}^{p+q} [z_k=0] $, and we choose $g$ to be the model metric $\om_{\D}$ given in the introduction. \\

Let us begin with the inclusion $\mathscr H ^{r,s}_B(X|\D) \subset H^0(X, T_s^r(X|\D))$. By orthogonality of the different $\frac{\partial} {\partial z_{I}}\otimes dz^J$, we only have to consider $u=v\frac{\partial} {\partial z_{ I}}\otimes dz^J$ for some (holomorphic) function $v$ satisfying:
\[ \frac{|v|}{\prod_{k=1}^p |z_k|^{(h_I(k)-h_J(k))a_k}\prod_{k=p+1}^{p+q} |z_k|^{h_I(k)-h_J(k)} \left(\log \frac 1 {|z_k|^2}\right)^{s+h_I(k)-h_J(k)}} \le C \]
Consider now the function
\[ w:=\frac{v}{\prod_{k=1}^p z_k^{\lceil (h_I(k)-h_J(k))a_k\rceil }\prod_{k=p+1}^{p+q} z_k^{h_I(k)-h_J(k)}}   \]
whose modulus $|w|$ can also be rewritten in the form
\[  \frac{|v|}{\prod_{k=1}^p |z_k|^{(h_I(k)-h_J(k))a_k}\prod_{k=p+1}^{p+q} |z_k|^{h_I(k)-h_J(k)} \left(\log \frac 1 {|z_k|^2}\right)^{s+h_I(k)-h_J(k)}} \cdot\]\[ \frac{\prod_{k=p+1}^{p+q} \left(\log \frac 1 {|z_k|^2}\right)^{s+h_I(k)-h_J(k)}}{\prod_{k=1}^p |z_k|^{\lceil (h_I(k)-h_J(k))a_k\rceil - (h_I(k)-h_J(k))a_k }} \]
The first factor is bounded; moreover, using the fact that $0\le\lceil x \rceil -x<1$ for every real number $x$ and that $\left(\log \frac 1 {|z|}\right) ^{\alpha}$ is integrable at $0$ for every real number $\alpha$, we conclude that the second factor is also $L^2$. This finishes to prove that $w$ is $L^2$, so in particular it extends across the support of our divisor, and therefore, $u\in H^0(\Omega, T_s^r(\Omega|\D_{|\Omega}))$.\\

For the reverse inclusion, every "irreducible" $\D$-holomorphic tensor $u\in H^0(\Omega, T_s^r(\Omega|\D_{|\Omega}))$ can be written 
\[u=\prod_{k=1}^p z_k^{\lceil (h_I(k)-h_J(k))a_k\rceil }\prod_{k=p+1}^{p+q} z^{h_I(k)-h_J(k)} v \frac{\partial} {\partial z_{I}}\otimes dz^J \]
for some holomorphic function $v$, and some $I \in \{1, \ldots, n\}^r$, $J \in \{1, \ldots, n\}^s$. So for $g$ the metric on $X_0$ attached to $\om_{\D}$, and setting $h=g_{r,s}\otimes h_L$ as in Definition \ref{def:bdd}, we have:
\[ |u|_h  = \frac{ |v| \prod_{k=1}^p |z_k|^{\lceil (h_I(k)-h_J(k)) a_k\rceil-(h_I(k)-h_J(k))a_k }}{\prod_{k=p+1}^{p+q}  \left(\log \frac 1 {|z_k|^2}\right)^{s+h_I(k)-h_J(k)}}\]
which is clearly bounded near the divisor since $s+h_I(k)-h_J(k)\ge 0$ for all $k$.
\end{proof}

Now we can state the main result of this section, which is a partial generalization of \cite[Theorem C]{CGP}:

\begin{theo}
\phantomsection
\label{thm:bt}
Let $(X,\D)$ be a pair such that $\D=\sum a_i \D_i$ has simple normal crossing support, with coefficients satisfying:
$ 1/2 \le a_i \le 1$ for all $i$. \\
If $K_X+\D$ is ample, then there is no non-zero  $\D$-holomorphic tensor of type $(r,s)$ whenever $r\ge s+1$:
\[H^0(X, T_s^r(X|\D))=0.\]
\end{theo}

\begin{proof}[Proof of Theorem \ref{thm:bt}]
Proposition \ref{prop:bdo} allows us to reduce the vanishing of the $\D$-holomorphic tensors to the one of bounded tensors as defined in \ref{def:bdd}.
The proof of this result is similar to the one of \cite[Theorem C]{CGP}, the two main new features being the existence of a Kähler-Einstein metric with mixed Poincaré and cone singularities along $\D$ (cf. Theorem A), and the use of a twisted Bochner formula.
For this reason, we will give a relatively sketchy proof, and we will refer to \cite{CGP} for the details we skip.\\

\noindent
To fix the notations, we write $\D=\sum_{j\in J} a_j \D_j + \sum_{k\in k} \D_k$ where for all $j\in J$, we have $a_j<1$. In the following, any index $j$ (resp. $k$) will be implicitely assumed to belong to $J$ (resp. $K$), whereas the index $i$ will vary in $J\cup K$.

\noindent
As $K_X+\D$ is ample, Theorem A guarantees the existence of a Kähler metric $\omi$ on $X_0$ such that $-\Ric \omi = \omi$, and having mixed Poincaré and cone singularities along $\D$.  We choose now an element $u\in \mathscr H ^{r,s}_B(X|\D)$ with $r\ge s+1$, and we want to use a Bochner formula to show that $u=0$. 

\noindent
To do this, we need to perform a cut-off procedure, and control the error term so that one can pass to the limit in the cut-off process. Let us now get a bit more into the details. \\\\

\noindent
\textbf{Step 1: The cut-off procedure}\\
We define $\rho: X\to]-\infty, +\infty]$ by the formula
\[\rho(x):= \log\left(\log {1\over \prod_i |s_i(x)|^2}\right).\]
For each $\varepsilon> 0$, let $\displaystyle \chi_{\varepsilon}: [0,+ \infty[\to [0, 1]$ be a smooth function which is 
equal to zero on the interval $\displaystyle [0, {1/\varepsilon}]$, and which is equal to 1
on the interval $\displaystyle [1+ {1/\varepsilon}, +\infty]$. One may for example define $\chi_{\ep}(x)=\chi_{1}(x-\frac{1}{\ep})$, so that
$$\sup_{\varepsilon> 0, t\in \mathbb R_+}|\chi_{\varepsilon}^\prime(t)|\le C < \infty,$$
and we define $\theta_\varepsilon: X\to [0, 1]$ by the expression
$$\theta_\varepsilon(x)= 1- \chi_{\varepsilon}\big(\rho(x)\big).$$
\noindent We assume from the beginning that we have 
$$\prod_i|s_i|^2\le e^{-2}$$
at each point of $X$, and then it is clear that we have 
$$\theta_\varepsilon =1 \iff \prod_i|s_i|^2\ge e^{-e^{1/\varepsilon}}$$
and also
$$\theta_\varepsilon =0 \iff \prod_i|s_i|^2\le e^{-e^{1+ 1/\varepsilon}}.$$
We evaluate next the norm of the $(0, 1)$--form $\bar \partial \theta_\varepsilon$; we have
$$\bar \partial \theta_\varepsilon(x)=  \chi_{\varepsilon}^\prime\big(\rho(x)\big)
{1\over \log {1\over \prod_i |s_i(x)|^2}}\sum_i{\langle s_i, D^\prime s_i\rangle
\over |s_i|^2}(x).$$
As $\omi$ has mixed Poincaré and cone singularities along $\D$, we have:
\begin{equation}
\label{27}
|\bar \partial \theta_\varepsilon|_{\omega_\infty}^2\le 
{C|\chi_{\varepsilon}'(\rho)|^2\over \log^2 {1\over \prod_j |s_j|^2}}\left(\sum_j{1\over |s_j|^{2(1-a_j)}} +\sum_k \log^2 |s_k|^2\right)
\end{equation} 
at each point of $X_0$. Indeed, this is a consequence of the fact that the norm of the $(1, 1)$-forms
\[\frac{i\langle D^\prime s_j, D^\prime s_j\rangle}{ |s_j|^{2a_j}} \qquad \mathrm{and} \qquad  \frac{i\langle D^\prime s_k, D^\prime s_k\rangle}{ |s_k|^{2}\log^2 |s_k|^2} \]
with respect to $\omega_\infty$ are bounded from above by a constant.\\

\noindent
Let $\varepsilon> 0$ be a real number; we consider the tensor
$$u_\varepsilon:= \theta_\varepsilon u.$$
It has compact support, hence by the (twisted) Bochner formula (see e.g. \cite[Lemma 14.2]{Dem95}), we infer
\begin{equation}
\label{28}
\int_{X_0}|\overline\partial(\# u_\varepsilon)|^2_h dV_{\omega_\infty}= \int_{X_0}|\overline\partial u_\varepsilon |^2_h dV_{\omega_\infty}
+ \int_{X_0}\left( \langle \mathcal R(u_\varepsilon), u_\varepsilon\rangle_h + \gamma |u_{\ep}|_h^2\right)  dV_{\omega_\infty}
\end{equation}
where:
\begin{enumerate}
\item[$\cdot$] $\mathcal R$ is a zero-order operator such that in our case ($-\Ric \omi = \omi$), we have
$$R_{j\overline i}= -\delta_{ji},$$
and therefore the linear term $\displaystyle \langle \mathcal R(u_\varepsilon), u_\varepsilon\rangle$ becomes simply $(s-r)|u_\varepsilon |^2$;\\
\item[$\cdot$] $h=\om_{\infty,*}\otimes h_L$, where $\om_{\infty,*}$ denotes the canonical extension of $\omi$ to the appropriate tensor fields (which are respectively $T^s_r(X_0)\otimes \Omega^{0,1}(X_0)$, $T^r_s(X_0)\otimes \Omega^{0,1}(X_0)$ and $T^r_s(X_0)$);
\item[$\cdot$] $\gamma=\tr_{\omi}(\Theta_h(L))$ is the trace with respect to $\omi$ of the curvature of $(L,h)$.\\
\end{enumerate}

Here we need to be cautious because of the singularities of the metric $h_L$ on $\D$. Indeed, the Bochner formula applies to smooth hermitian metrics; howwever one can consider here some metric $h_{L,\ep}$ which would coincide with $h_L$ whenever $\theta_{\ep}>0$ and which is a smooth metric near $\D$. For example, on can set $h_{L,\ep}=\theta_{\ep/2}h_L+(1-\theta_{\ep/2})$. Then for each $\ep<1$, there exists an open set $U_{\ep} \supset \overline{\{\theta_{\ep}>0\}}$ on which $h_{L,\ep}=h_{L}$ so that in particular, in the formula \eqref{28}, one can replace $h_L$ by $h_{L,\ep}$ without affecting anything.\\

There remains two steps to achieve now: the first one consists in evaluating the correction term $\gamma$ induced by the curvature of $L$, and the second one is to show that the integration by part is valid in the  Poincaré-cone setting; more precisely we have to prove that the error term $\int_{X_0} |\bar \partial u_{\ep} |^2_{h} dV_{\omi}$ converges to $0$ as $\ep$ goes to $0$.\\\\

\noindent 
\textbf{Step 2: Dealing with the curvature of $(L,h)$}\\
We work on local charts where $\Dlc$ is given by $\{\prod_{k\in K} z_k =0\}$. 

\noindent
To begin with, we know that there exists $A>0$ such that  $\omi \le  A \left(\om_{\rm klt}+ \sum_k \frac{i dz_k \wedge d \bar z_k}{|z_k|^2\log^2 |z_k| ^2}\right)$ where $\om_{\rm klt}$ is some smooth metric on $X\setminus \Supp(\Dklt)$ having cone singularities along $\Dklt$. It will be useful to introduce the notation $\om_{\rm lc}:=\om_{\rm klt}+ \sum_k \frac{i dz_k \wedge d \bar z_k}{|z_k|^2\log^2 |z_k| ^2}$. Moreover, the usual computations (see e.g.\cite[Lemma 1]{KobR}) show that there exists a smooth $(1,1)$-form $\alpha$ on our chart satisfying
\[-\sum_{k\in K} \ddc \log \log \frac{1}{|s_k|^2} \ge \sum_{k\in K} \frac{i dz_k \wedge d \bar z_k}{|z_k|^2\log^2 |z_k| ^2} + \frac{1}{B}\alpha \]
where $B$ is a constant which can be taken as large as wanted up to scaling the (smooth) metrics on the $\D_k$'s, which does not affect their curvature.
Therefore, the curvature $\Theta_{h_L}(L)$ of $L$ satisfies:
\begin{eqnarray*}
\tr_{\omi} (-\Theta_{h_L}(L)) & \ge & A^{-1} \tr_{\om_{\rm lc}}(-\Theta_{h_L}(L))\\
&\ge & 2sA^{-1} \tr_{\om_{\rm lc}}\left(\sum_{k\in K} \frac{i dz_k \wedge d \bar z_k}{|z_k|^2\log^2 |z_k| ^2} + \frac{1}{B}\alpha\right)\\
& \ge & 2s |K|A^{-1} + 2s(AB)^{-1}\tr_{\om_{\rm lc}} \alpha 
\end{eqnarray*}
As $\om_{\rm lc}$ dominates some smooth form on $X$, the quantity $\tr_{\om_{\rm lc}} \alpha $ is bounded on $X_0$ so that $2s(AB)^{-1}\tr_{\om_{\rm lc}} \alpha $ can be made as small as we want by scaling the metrics on the divisors as explained above. Therefore one has 
\begin{equation}
\label{eq:tr}
\gamma =\tr_{\omi} (\Theta_{h_L}(L)) \le \frac{1}{2}
\end{equation}
on $X_0$.\\\\

\noindent 
\textbf{Step 3: Controlling the error term}\\
Let us get now to the last step in showing that the term
$$\int_{X_0}|\overline\partial u_\varepsilon |^2_{h}dV_{\omega_\infty}$$
tends to zero as $\varepsilon\to 0$. Since $u$ is holomorphic, we have
$$\bar \partial u_\varepsilon= u\otimes \bar \partial \theta_\varepsilon;$$ 
we recall now that $u \in \mathscr H ^{r,s}_B(X|\D)$,  so we have

\begin{equation}
\label{29}
|\bar \partial u_\varepsilon|^2_h\le C|\bar \partial \theta_\varepsilon|^2_{\omi}.
\end{equation}
By inequality \eqref{27} above we infer 
\begin{equation}
\label{30}
\int_{X_0}|\overline\partial u_\varepsilon |^2_hdV_{\omega_\infty}
\le C\int_{X_0}{|\chi^\prime_\varepsilon (\rho)|^2\over \log^2 {1\over \prod_i |s_i|^2}}\left(\sum_j{1\over |s_j|^{2(1-a_j)}}+\sum_k \log^2 |s_k|^2 \right)dV_{\omega_\infty}.
\end{equation}

\noindent As $\omega_\infty$ as mixed Poincaré and cone singularities along $\D$, we have:
\begin{equation}
\label{31}
\int_{X_0}|\overline\partial u_\varepsilon |^2_hdV_{\omega_\infty}
\le C\int_{X_0}\frac{|\chi^\prime_\varepsilon (\rho)|^2  \left(\sum_j{1\over |s_j|^{2(1-a_j)}}+\sum_k \log^2 |s_k|^2 \right)}{\prod_j |s_j|^{2a_j} \prod_k |s_k|^2 \log^2 |s_k|^2 \cdot
\log^2 {1\over \prod_i |s_i|^2}}dV_{\omega}.
\end{equation}
for some constant $C> 0$ independent of $\varepsilon$; here we denote by 
$\omega$ a smooth hermitian metric on $X$. 
We remark that the support of the function $\displaystyle \chi^\prime_\varepsilon (\rho)$ is contained in the set 
$$e^{-e^{1+1/\varepsilon}}\le \prod_i|s_i|^2\le e^{-e^{1/\varepsilon}}$$
so in particular we have
\begin{equation}
\label{32}
{|\chi^\prime_\varepsilon (\rho)|^2\over 
\log^{1\over 2} {1\over \prod_j |s_i|^2}}\le Ce^{-{1\over 2\varepsilon}}.
\end{equation}
We also notice that for each indexes $j_0\in J$ and $k_0\in K$ we have respectively:

\[\int_{X_0}{dV_{\omega}\over |s_{j_0}|^{2}\log^{3/2} \left({1\over \prod_i |s_i|^2}\right)\prod_{j\neq j_0} 
|s_j|^{2a_j}\prod_k |s_k|^2 \log^2 |s_k|^2}\]
\[ \le C \int_{X_0}{dV_{\omega}\over |s_{j_0}|^{2}\log^{3/2} \left({1\over |s_{j_0}|^2}\right)\prod_{j\neq j_0} 
|s_j|^{2a_j} \prod_k |s_k|^2 \log^2 |s_k|^2}\]
and 
\[\int_{X_0}{dV_{\omega}\over |s_{k_0}|^{2}\log^{3/2} \left({1\over \prod_i |s_i|^2}\right)\prod_j
|s_j|^{2a_j}\prod_{k\neq k_0} |s_k|^2 \log^2 |s_k|^2}\]
\[ \le C \int_{X_0}{dV_{\omega}\over |s_{k_0}|^{2}\log^{3/2} \left({1\over |s_{k_0}|^2}\right)\prod_{j} 
|s_j|^{2a_j} \prod_{k\neq k_0} |s_k|^2 \log^2 |s_k|^2}\]
and the integral in the right hand sides are convergent, given that the hypersurfaces 
$(\D_i)$ have strictly normal intersections. \\ 
\noindent
Finally we combine the inequalities \eqref{31}-\eqref{32}, and we get
\begin{equation}
\label{33}
\int_{X_0}|\overline\partial u_\varepsilon |^2dV_{\omega_\infty}
\le Ce^{-{1\over 2\varepsilon}}.
\end{equation}

\noindent 
\textbf{Step 4: Conclusion}\\
As we can see, the relations \eqref{28} and \eqref{33} combined with the fact, coming from \eqref{eq:tr}, that
$$\langle \mathcal R(u_\varepsilon), u_\varepsilon\rangle_h+\gamma |u_{\ep}|^2_h\le  \left(\frac 1 2 +s- r\right)|u_{\ep}|^2_h$$
(which tends to $(\frac 1 2 +s- r)|u|^2_h$) will give a contradiction if $u$ is not identically zero on $X_0$ (we recall that by hypothesis 
we have $r\ge s+ 1$).

\end{proof}

\bibliographystyle{smfalpha}
\bibliography{Biblio.bib}

\newcommand{\etalchar}[1]{$^{#1}$}
\providecommand{\bysame}{\leavevmode ---\ }
\providecommand{\og}{``}
\providecommand{\fg}{''}
\providecommand{\smfandname}{et}
\providecommand{\smfedsname}{\'eds.}
\providecommand{\smfedname}{\'ed.}
\providecommand{\smfmastersthesisname}{M\'emoire}
\providecommand{\smfphdthesisname}{Th\`ese}
\begin{thebibliography}{BBGZ09}

\bibitem[Auv11]{Auvray}
{\scshape H.~Auvray} -- {\og The space of {Poincar{\'e} type K{\"a}hler}
  metrics on the complement of a divisor\fg}, \emph{arXiv:1109.3159} (2011).

\bibitem[BBE{\etalchar{+}}]{BBEGZ}
{\scshape R.~Berman, S.~Boucksom, P.~Eyssidieux, V.~Guedj {\normalfont
  \smfandname} A.~Zeriahi} -- {\og {Monge-Amp{\`e}re mean field equations and
  the K{\"a}hler-Ricci flow}\fg}, \emph{{in preparation}}.

\bibitem[BBGZ09]{BBGZ}
{\scshape R.~Berman, S.~Boucksom, V.~Guedj {\normalfont \smfandname}
  A.~Zeriahi} -- {\og {A variational approach to complex Monge-Amp\`ere
  equations}\fg}, \emph{arXiv:0907.4490} (2009) (English).

\bibitem[BEGZ]{BEGZ}
{\scshape S.~Boucksom, P.~Eyssidieux, V.~Guedj {\normalfont \smfandname}
  A.~Zeriahi} -- {\og {Monge-Amp{\`e}re equations in big cohomology
  classes}\fg}, \emph{Acta Math. (to appear)}.

\bibitem[Ber11]{rber}
{\scshape R.~Berman} -- {\og {A thermodynamical formalism for Monge-Amp{\`e}re
  equations, Moser-Trudinger inequalities and K{\"a}hler-Einstein metrics}\fg},
  \emph{arXiv:1011.3976} (2011).

\bibitem[BGZ08]{BGZ}
{\scshape S.~Benelkourchi, V.~Guedj {\normalfont \smfandname} A.~Zeriahi} --
  {\og {A priori estimates for weak solutions of complex Monge-Amp'ere
  equations}\fg}, \emph{Ann. Sc. Norm. Super. Pisa, Cl. Sci.} \textbf{5} (2008)
  (English).

\bibitem[B{\l}o11]{Bl}
{\scshape Z.~B{\l}ocki} -- {\og {The Calabi-Yau Theorem}\fg}, \emph{{to appear
  in Lecture Notes in Mathematics as a part of the volume Complex
  Monge-Amp{\`e}re equations and geodesics in the space of K{\"a}hler metrics
  (ed. V. Guedj)}} (2011), {http://gamma.im.uj.edu.pl/$\sim$blocki/publ}.

\bibitem[Bre11]{Brendle}
{\scshape S.~Brendle} -- {\og {Ricci flat K{\"a}hler metrics with edge
  singularities }\fg}, \emph{arXiv 1103.5454} (2011).

\bibitem[BT82]{BT82}
{\scshape E.~Bedford {\normalfont \smfandname} B.~Taylor} -- {\og A new
  capacity for plurisubharmonic functions\fg}, \emph{Acta Math.} \textbf{149}
  (1982), no.~1-2, p.~1--40.

\bibitem[Cam09]{Camp2}
{\scshape F.~Campana} -- {\og {Orbifoldes sp{\'e}ciales et classification
  bim{\'e}romorphe des vari{\'e}t{\'e}s k{\"a}hleriennes compactes}\fg},
  \emph{arXiv:0705.0737} (2009).

\bibitem[Cam10]{Camp1}
\bysame , {\og {Special orbifolds and birational classification: a survey
  }\fg}, \emph{arXiv:1001.3763} (2010).

\bibitem[CG72]{CG}
{\scshape J.~Carlson {\normalfont \smfandname} P.~Griffiths} -- {\og {A defect
  relation for equidimensional holomorphic mappings between algebraic
  varieties.}\fg}, \emph{Ann. Math.} \textbf{95} (1972), p.~557--584 (English).

\bibitem[CGP11]{CGP}
{\scshape F.~Campana, H.~Guenancia {\normalfont \smfandname} M.~P\u{a}un} --
  {\og {Metrics with cone singularities along normal crossing divisors and
  holomorphic tensor fields}\fg}, \emph{arXiv:1104.4879} (2011).

\bibitem[Cla08]{Clodo}
{\scshape B.~Claudon} -- {\og Gamma-reduction for smooth orbifolds\fg},
  \emph{arXiv:0801.4677} (2008).

\bibitem[CY80]{CY}
{\scshape S.-Y. Cheng {\normalfont \smfandname} S.-T. Yau} -- {\og {On the
  existence of a complete K{\"a}hler metric on non-compact complex manifolds
  and the regularity of Fefferman's equation.}\fg}, \emph{Commun. Pure Appl.
  Math.} \textbf{33} (1980), p.~507--544.

\bibitem[Dem]{DemP}
{\scshape J.-P. Demailly} -- {\og Potential theory in several complex
  variables\fg}, Lecture given at the CIMPA in 1989, completed by a conference
  given in Trento, 1992; avalaible at the author's webpage:
  \url{http://www-fourier.ujf-grenoble.fr/~demailly/books.html}.

\bibitem[Dem95]{Dem95}
{\scshape J.-P. Demailly} -- {\og {Algebraic criteria for Kobayashi hyperbolic
  projective varieties and jet differentials}\fg}, \emph{Proceedings of the
  Symposia in Pure Maths.,} (1995), no.~62.2.

\bibitem[Gri76]{Gri}
{\scshape P.~A. Griffiths} -- \emph{Entire holomorphic mappings in one and
  several complex variables}, Princeton University Press, 1976.

\bibitem[GT77]{Gilb}
{\scshape D.~Gilbarg {\normalfont \smfandname} N.~Trudinger} -- \emph{Elliptic
  partial differential equations of second order}, Springer-Verlag, 1977.

\bibitem[GZ05]{GZ}
{\scshape V.~Guedj {\normalfont \smfandname} A.~Zeriahi} -- {\og {Intrinsic
  capacities on compact K\"ahler manifolds.}\fg}, \emph{J. Geom. Anal.}
  \textbf{15} (2005), no.~4, p.~607--639 (English).

\bibitem[GZ07]{GZ2}
{\scshape V.~Guedj {\normalfont \smfandname} A.~Zeriahi} -- {\og {The weighted
  Monge-Amp{\`e}re energy of quasi plurisubharmonic functions}\fg}, \emph{J.
  Funct. An.} \textbf{250} (2007), p.~442--482.

\bibitem[Jef00]{Jef}
{\scshape T.~Jeffres} -- {\og {Uniqueness of K{\"a}hler-Einstein cone
  metrics}\fg}, \emph{Publ. Mat. 44} \textbf{44} (2000), no.~2, p.~437--448.

\bibitem[JMR11]{MJR}
{\scshape T.~Jeffres, R.~Mazzeo {\normalfont \smfandname} Y.~Rubinstein} --
  {\og {K{\"a}hler-Einstein metrics with edge singularities}\fg},
  \emph{arXiv:1105.5216} (2011), with an appendix by C. Li and Y. Rubinstein.

\bibitem[Kob84]{KobR}
{\scshape R.~Kobayashi} -- {\og {K{\"a}hler-Einstein metric on an open
  algebraic manifolds}\fg}, \emph{Osaka 1. Math.} \textbf{21} (1984),
  p.~399--418.

\bibitem[Ko{\l}98]{Kolo}
{\scshape S.~Ko{\l}odziej} -- {\og {The complex Monge-Amp{\`e}re operator}\fg},
  \emph{Acta Math.} \textbf{180} (1998), no.~1, p.~69--117.

\bibitem[Ko{\l}01]{Kol}
{\scshape S.~Ko{\l}odziej} -- {\og {Stability of solutions to the complex
  Monge-Amp\`ere equations on compact Kähler manifolds}\fg}, \emph{Preprint}
  (2001).

\bibitem[Maz99]{Maz}
{\scshape R.~Mazzeo} -- {\og {K{\"a}hler-Einstein metrics singular along a
  smooth divisor}\fg}, \emph{Journ{\'e}es "{\'E}quations aux d{\'e}riv{\'e}es
  partielles" (Saint Jean-de-Mont, 1999)} (1999).

\bibitem[Siu87]{Siu}
{\scshape Y.-T. Siu} -- \emph{{Lectures on Hermitian-Einstein Metrics for
  Stable Bundles and K{\"a}hler-Einstein Metrics}}, Birkh{\"a}user, 1987.

\bibitem[TY87]{Tia}
{\scshape G.~Tian {\normalfont \smfandname} S.-T. Yau} -- {\og {Existence of
  K{\"a}hler-Einstein metrics on complete K{\"a}hler manifolds and their
  applications to algebraic geometry}\fg}, \emph{Adv. Ser. Math. Phys. 1}
  \textbf{1} (1987), p.~574--628, Mathematical aspects of string theory (San
  Diego, Calif., 1986).

\bibitem[Yau78]{Yau2}
{\scshape S.-T. Yau} -- {\og {A general Schwarz lemma for K{\"a}hler
  manifolds.}\fg}, \emph{Amer. J. Math.} \textbf{100} (1978), p.~197--203.

\end{thebibliography}
\vspace{3mm}

\end{document}